\newif\ifpictures
\numberwithin{equation}{section}
\newtheorem{thm}{Theorem}
\newtheorem{prop}[thm]{Proposition}
\newtheorem{lemma}[thm]{Lemma}
\newtheorem{cor}[thm]{Corollary}
\numberwithin{thm}{section}
\theoremstyle{definition}
\newtheorem*{defn*}{Definition}
\newtheorem{exa}[thm]{Example}
\newtheorem*{exa*}{Example}
\newtheorem{rem}[thm]{Remark}
\newcounter{FNC}[page]
\def\newfootnote#1{{\addtocounter{FNC}{2}$^\fnsymbol{FNC}$%
     \let\thefootnote\relax\footnotetext{$^\fnsymbol{FNC}$#1}}}
\definecolor{DarkGreen}{rgb}{0,0.65,0}
\newcommand{\C}{\mathbb{C}}
\newcommand{\F}{\mathbb{F}}
\newcommand{\N}{\mathbb{N}}
\newcommand{\Q}{\mathbb{Q}}
\newcommand{\R}{\mathbb{R}}
\newcommand{\Z}{\mathbb{Z}}
\newcommand\cA{{\ensuremath{\mathcal{A}}}\xspace}
\newcommand\cB{{\ensuremath{\mathcal{B}}}\xspace}
\newcommand\cV{{\ensuremath{\mathcal{V}}}\xspace}
\newcommand{\eps}{\varepsilon}
\newcommand{\alp}{\alpha}
\newcommand{\lam}{\lambda}
\newcommand{\lf}{\left}
\newcommand{\ri}{\right}
\newcommand{\ra}{\rightarrow}
\newcommand{\Lera}{\Leftrightarrow}
\newcommand{\ovl}{\overline}
\newcommand{\wh}{\widehat}
\DeclareMathOperator{\conv}{conv}
\DeclareMathOperator{\Arg}{Arg}
\DeclareMathOperator{\Log}{Log}
\DeclareMathOperator{\New}{New}
\DeclareMathOperator{\resultant}{Res}
\DeclareMathOperator{\odd}{odd} 
\DeclareMathOperator{\even}{even} 
\DeclareMathOperator{\ord}{ord}
\DeclareMathOperator{\RE}{Re}
\DeclareMathOperator{\IM}{Im}
\DeclareMathOperator{\coA}{\text{co}\cA}
\title{Norms of Roots of Trinomials}
\author{Thorsten Theobald}
\author{Timo de Wolff}
\address{Thorsten Theobald, Goethe-Universit\"at, FB 12 -- Institut f\"ur Mathematik,
Postfach 11 19 32, D-60054 Frankfurt am Main, Germany \medskip \newline 
\hspace*{8pt} Timo de Wolff, Texas A\&M University, Department of Mathematics, College Station, TX 77843-3386, 
 USA\medskip
}
\email{theobald@math.uni-frankfurt.de, dewolff@math.tamu.edu}
\subjclass[2010]{14D05, 14H50, 12D10, 55P15, 55R10}
\keywords{Amoeba, discriminant, fundamental group, hypotrochoid, norm, space of trinomials, trinomial, torus knot}
\begin{document}

\begin{abstract}
The behavior of norms of roots of univariate trinomials $z^{s+t} + p z^t + q \in \mathbb{C}[z]$ for fixed support $A = \{0,t,s+t\} \subset \mathbb{N}$ with respect to the choice of
coefficients $p,q \in \mathbb{C}$ is a classical late 19th and early 20th century problem. 
Although algebraically characterized by P.\ Bohl in 1908, the geometry and topology of the corresponding
parameter space of coefficients had yet to be revealed. 
Assuming $s$ and $t$ to be coprime we provide such a characterization for the space of trinomials
by reinterpreting the problem in terms of amoeba theory.
The roots of given norm are parameterized in terms of a hypotrochoid curve
along a $\C$-slice of the space of trinomials, with multiple roots of this norm
appearing exactly on the singularities. 
As a main result, we show that the set of all trinomials with support $A$ and
certain roots of identical norm, as well as its complement can be deformation 
retracted to the torus knot $K(s+t,s)$, and thus are connected but not simply connected. 
An exception is the case where the $t$-th smallest norm coincides with 
the $(t+1)$-st smallest norm. 
Here, the complement has a different topology since
it has fundamental group $\Z^2$. 
%
%\bigskip
%
%\bigskip
%
%\bigskip
%
%\noindent T.T.: Goethe-Universit\"at, FB 12 -- Institut f\"ur Mathematik,
%Postfach 11 19 32, D-60054 Frankfurt am Main, Germany,
%{\sf theobald@math.uni-frankfurt.de}
%
%\bigskip
%
%\noindent T.d.W.: Texas A\&M University, Department of Mathematics, College Station, TX 77843-3386, USA,
%{\sf dewolff@math.tamu.edu}
\end{abstract}

\maketitle

\section{Introduction}
\label{Sec:Introduction}

The investigation of \textit{univariate trinomials}, i.e., polynomials of the form
\begin{eqnarray}
	z^{s+t} + p z^t + q \in \C[z] \label{Equ:Trinomial}
        \text{ with } s,t \in \N^*
\end{eqnarray}
is a truly classical late nineteenth and early twentieth century problem (see, e.g.,
\cite{Biernaky,Bohl,Egervary3,Farkas2,Kemper,Landau,Nekrassoff}). At this time mathematicians started to ask how the $s+t$ complex roots depend on the choice of the coefficients $p,q$. For example, how the roots can be characterized geometrically, how many of them lie in a disk of given radius or whether two roots share the same norm.

Algebraically, these questions are well understood -- particularly due to P.~Bohl's results
from 1908 (\cite{Bohl}; stated in 
Theorems \ref{Thm:Bohl1}, \ref{Thm:Bohl2} below). And also the geometry of roots
in the complex plane is well described by P.~Nekrassoff in 1887 \cite{Nekrassoff} and  J.~Egerv\'{a}ry in 1922--1931 (see the survey~\cite{Szabo}). But, after more than a century has passed and although the investigation of trinomials went on in modern times (e.g., \cite{Dilcher:Nulton:Stolarsky,Fell,Melman}), the parameter space of coefficients and in particular its geometric and topological properties have still not been understood.

Let $T_A$ denote the space of all trinomials with support set $A = \{0,t,s+t\}$ such that $s$ and $t$ are coprime. Since we
usually assume $p,q \neq 0$, $T_A$ can be identified with the two-dimensional space of 
parameters $(p,q) \in (\C^*)^2$. Immediate first questions on the space of trinomials are:
\begin{enumerate}
	\item[(A)] What is, for given $q$, the geometric structure of the set of all $p$ such that $f$ has a 
  root with norm $v$?
	\item[(B)] What is, for given $q$, the geometric structure of the set of all $p$ such that $f$ has two roots of norm $v$, respectively of the same norm at all?
\end{enumerate}
Specifically, we aim at semialgebraic and parametric descriptions of these sets.

Denote by $U_j^A$ the subset of trinomials in $T_A$ whose $j$-th and $(j+1)$-th smallest root (ordered by their norm)
have distinct norm, $1 \le j \le s+t-1$. Formally, we also consider $U^A_0$ and $U^A_{s+t}$, by 
declaring $f \in U^A_0$ and $f \in U_{s+t}^A$ for every trinomial $f \in T_A$. 
For a given $f \in T_A$ a classical question is to determine the subset $J \subseteq \{0, \ldots, s+t\}$
such that $f \in U_j^A$ if and only if $j \in J$. More globally, we ask:

\begin{enumerate}
\item[(C)] Which geometric and topological properties do the sets $U_j^A \subseteq T_A$ and 
       their complements have?
\end{enumerate}

In this article we reinterpret the classical problems about the norms of roots of trinomials in terms of \textit{amoeba theory} and show that this tool-set allows to solve these problems and to uncover a beautiful geometric and topological structure hidden in the parameter space of trinomials. 

For a given Laurent polynomial $f \in \C[z_1^{\pm 1},\ldots,z_n^{\pm 1}]$ with zero set $\cV(f) \subseteq (\C^*)^n$ the \textit{amoeba} $\cA(f)$ (introduced by Gelfand, Kapranov and Zelevinsky in \cite{Gelfand:Kapranov:Zelevinsky}) is the image of $\cV(f)$ under the $\log$-absolute-value map
\begin{eqnarray}
\label{Equ:Amoebadef}
	\Log|\cdot| : \lf(\C^*\ri)^n \ra \R^n, \quad \ (z_1,\ldots,z_n) \mapsto (\log|z_1|, \ldots, \log|z_n|) \, .
\end{eqnarray}
Amoebas not only have strong structural properties and connections to various fields of 
mathematics including complex analysis (e.g., \cite{Forsberg:Passare:Tsikh,Passare:Rullgard:Spine}), topology 
of real curves (e.g., \cite{Mikhalkin:Annals}) and tropical geometry (e.g., \cite{Maclagan:Sturmfels,Maslov,Mikhalkin:Enumerative}), but also turn out to be a canonical and powerful tool to understand the connection between varieties and parameter spaces of polynomials (see, e.g., \cite{Gelfand:Kapranov:Zelevinsky,Rullgard:Franzoesisch,Rullgard:Diss,Theobald:deWolff:Genus1}).

\smallskip

With regard to the trinomial setup, our point of departure is that
a trinomial~\eqref{Equ:Trinomial} 
with $q \neq 0$ has a root of a given norm $v \in \R_{> 0}$ if and only if 
$p$ is located on an algebraic \textit{hypotrochoid curve} depending on $q$, the exponents $s,t$ and, of course, $v$ itself (Theorem \ref{Thm:Hypotrochoid}). Hypotrochoids are well-known special instances of roulette curves in the complex plane, see, e.g., \cite{Brieskorn:Knoerrer,Fladt} for many of their nice
properties.

We show that two roots share the same norm $v$ if and only if the coefficient $p$ is located on a singularity (in general, a node) of the particular hypotrochoid (Theorem \ref{Thm:MultipleNormsCorrespondToSingularities}). Moreover,
there exist two roots with the same norm if and only if $p$ is located on a particular union of $2(s+t)$ rays $F(s,t,q)$ in the corresponding $\C$-slice of the parameter space. $F(s,t,q)$ is thus determined by the support set and $q$ (Theorems \ref{Thm:ComplementIn1Fan} and \ref{Thm:LocalStructureUalpha}). 

By additionally studying the discriminants of trinomials, we provide a complete answer to
question (B) through Theorems~\ref{Thm:LocalStructureUalpha} and Corollary~\ref{Cor:LocalStructureUzero} below,
where it is somewhat unexpected that there are differences between the characterizations of
the sets $U_j^A$ for $j \in \{1, \ldots, s+t-1\} \setminus \{j\}$ and for $j=t$.
Furthermore, this allows one to prove that only particular roots (with respect to the ordering induced by the norm) can have multiplicity two (Corollary \ref{Cor:Discriminant}). Moreover, only particular roots of real trinomials can be real (Theorem \ref{Thm:LocationRealRoots}). Geometrically, in the case of roots with multiplicity two the hypotrochoid deforms to a hypocycloid and $p$ is located on a cusp instead of a node. 

For the variant of problem (A) in which the coefficient $p$ instead of $q$ is fixed, we obtain similar 
results involving epitrochoids instead of hypotrochoids (Theorem \ref{Thm:Epitrochoid}).

This local description of the parameter space then allows us to tackle Problem (C) and 
reveal the topology of the parameter space of all trinomials in $T_A$.
We show that for all $j \in\{1,\ldots,s+t-1\} \setminus \{t\}$ the set $U_j^A$ as well as its complement $(U_j^A)^c = T_A \setminus U_j^A$ is a connected but not simply connected set. Namely, both $U_j^A$ and $(U_j^A)^c$ can be deformation retracted to the torus knot $K(s+t,s)$ (Theorem \ref{Thm:TopologyTrinomials}), which is a closed path on a standard torus. Hence, the fundamental group of these sets $U_j^A$ and $(U_j^A)^c$ is $\Z$. The same holds for $(U_t^A)^c$ and the zero set $\cV(D)$ of the discriminant $D$ of trinomials, where furthermore $\cV(D)$ is a deformation retract of $(U_t^A)^c$. $U_t^A$ is also connected and not simply connected, but its topology is different since it has fundamental group $\Z^2$ (Theorem \ref{Thm:TopologyMiddleTerm}). Note that complements are taken in $T_A \cong (\C^*)^2$ and therefore the fundamental groups of these complements can differ from fundamental groups of $\R^3 \setminus K(s+t,s)$.

The article is organized as follows. In Section \ref{Sec:Preliminaries}, we fix our notation and introduce some facts from amoeba theory and about fibrations. In Section \ref{Sec:TrinomialsClassical} we 
review the classical questions and results on trinomials developed mostly during 1880-1930, 
as well as some modern facts. Section~\ref{Sec:TrinomialsModulis} deals with the local structure of the parameter space along $\C$-slices given by fixing one of the two coefficients. 
In Section \ref{Sec:TrinomialsTopology} we investigate the complete parameter space and provide the topological description of the sets $U_j^A$, their complements and the zero set $\cV(D)$ of
the discriminant. Section~\ref{Sec:Openquestions} closes the paper with some final remarks
on the (widely open) extension of our trinomial results to the case of polynomials with
general support set.

We remark that parts of the results of this article are contained in the thesis \cite{deWolff:Diss} of the second author.

\subsection*{Acknowledgements}

We thank Jens Forsg{\aa}rd and Maurice Rojas for helpful comments and for bringing various additional aspects to our attention. We are also grateful to an anonymous referee for detailed suggestions.

The first author was partially supported by DFG projects TH 1333/2-1 and 1333/3-1. The second author was partially supported by  DFG project TH 1333/2-1, GIF Grant no. 1174/2011 and DFG project MA 4797/3-2.

\section{Preliminaries}
\label{Sec:Preliminaries}
\subsection{Amoebas}
\label{SubSec:Amoebas}
We collect some facts and notation from amoeba theory and afterwards restrict ourselves to the univariate case. For further information, next to the
fundamental reference \cite{Gelfand:Kapranov:Zelevinsky}, see, e.g., \cite{deWolff:Diss,Mikhalkin:Survey,Passare:Tsikh:Survey,Rullgard:Diss}.

For a multivariate polynomial $f = \sum_{\alpha \in A} b_\alpha \mathbf{z}^{\alpha} \in \C[\mathbf{z}^{\pm 1}]$ over a finite support set $A \subseteq \Z^n$ the amoeba $\cA(f) \subseteq \R^n$ as defined in~\eqref{Equ:Amoebadef} is a closed set with non-empty complement and each component of the complement of $\cA(f)$ is convex (see \cite{Gelfand:Kapranov:Zelevinsky}). Furthermore, every component of the complement of a given amoeba $\cA(f)$ corresponds to a unique lattice point in the Newton polytope $\New(f)$ of $f$ via the \emph{order map} (see \cite{Forsberg:Passare:Tsikh}),
\begin{eqnarray}
  \label{Equ:Ordermap}
\ord: \R^n \setminus \cA(f) & \to & \New(f) \cap \Z^n, \quad \mathbf{w} \mapsto (u_1,\ldots,u_n) \ \text{ with } \\
  u_j & = & \frac{1}{(2\pi i)^n} \int_{\Log|\mathbf{z}| = \mathbf{w}} \frac{z_j \partial_j f(\mathbf{z})}{f(\mathbf{z})}
    \frac{dz_1 \cdots dz_n}{z_1 \cdots z_n} \, , \quad 1 \le j \le n \, .\nonumber
\end{eqnarray}
Notice that this map indeed is constant on each component of the complement of $\cA(f)$. As a consequence, we define for each $\alp \in \New(f) \cap \Z^n$ the set
\begin{eqnarray*}
	E_{\alpha}(f) & = & \{\mathbf{w} \in \R^n \setminus \cA(f) \ : \ \ord(\mathbf{w}) = \alpha\},
\end{eqnarray*}
i.e., the set of all points in the complement of the amoeba $\cA(f)$, which have order $\alpha$.

For a fixed support set $A \subseteq \Z^n$, we can identify every polynomial with its coefficient
vector. Thus, we can identify the parameter space $(\C^*)^A$ of polynomials with support set $A$ with a $(\C^*)^d$, where $d = \# A$. One key problem in amoeba theory is to understand the sets
\begin{eqnarray*}
	U_\alp^A & = & \{f \in (\C^*)^A \ : \ E_{\alpha}(f) \neq \emptyset\},
\end{eqnarray*}
i.e., the set of all polynomials with Newton polytope $A$, whose amoebas have a component in the complement of order $\alpha$ (see, e.g., \cite[Remark 1.10, p.\ 198]{Gelfand:Kapranov:Zelevinsky}).
 These sets were systematically studied first by Rullg{\aa}rd and turn out to have nice structural properties. E.g., they are open, semi-algebraic sets, which are non-empty for all $\alpha \in A$ (see \cite{Rullgard:Franzoesisch,Rullgard:Diss}).

\smallskip

We describe the \textit{lopsidedness} condition introduced by Purbhoo in \cite{Purbhoo} and similarly used before by Passare, Rullg{\aa}rd et.\ al.\ \cite{Forsberg:Passare:Tsikh,Rullgard:Diss}. For a given $f = \sum_{j = 1}^d b_j \mathbf{z}^{\alpha(j)} \in \C[z_1^{\pm 1},\ldots,z_n^{\pm 1}]$ and $\mathbf{v} \in \R_{>0}^n$ we say that $f$ is \textit{lopsided} at $\Log|\mathbf{v}|$ if 
one of the entries in the list
\begin{eqnarray*}
	f\{\mathbf{v}\} & = & \left[ |b_1 \mathbf{v}^{\alpha(1)}|,\ldots,|b_d \mathbf{v}^{\alpha(d)}| \right]
\end{eqnarray*}
is larger than the sum of all of the others. Clearly, if $f$ is lopsided at $\Log|\mathbf{v}|$ then $\Log|\mathbf{v}| \notin \cA(f)$. Furthermore, if $|b_j \mathbf{v}^{\alpha(j)}|$ is the dominating term in the lopsided list $f\{\mathbf{v}\}$, then $\ord(\Log|\mathbf{v}|) = \alpha(j)$ (see \cite[Proposition 2.7]{Forsberg:Passare:Tsikh} and \cite[Proposition 4.1]{Purbhoo}).

\smallskip

In this article we investigate complex univariate trinomials $f = z^{s+t} + p z^t + q \in \C[z]$
with $s,t \in \N^*$ (i.e., $A = \{0,s,s+t\} \subseteq \N$). Mostly, we assume $q \in \C^*$. 
Furthermore, we always assume that $s,t$ are coprime, because all other cases can be traced back to those instances via the substitution $z^{\gcd(s,t)} \mapsto z$. For univariate polynomials, most objects
from amoeba theory are represented by well-known (classical) objects and theorems, as explained in the following. This is convenient, since it allows us to argue in, say, classical terms and let the amoeba machinery run in the background.

Let us assume that $f = (z - a_1) \cdots (z - a_{s+t})$,
where multiple roots of $f$ are allowed.
We can always assume  $|a_1| \leq \cdots \leq |a_{s+t}|$.
Hence, the amoeba $\cA(f)$ is the set of $s+t$ points $\log|a_1|,\ldots,\log|a_{s+t}|$ on the real line. In the case of univariate polynomials the order map for amoebas coincides with the classical argument principle from complex analysis (in fact, the order map \textit{is} nothing else than an extension of the argument principle to the multivariate case; see \cite{Forsberg:Passare:Tsikh} for further details). Recall that for a univariate complex Laurent polynomial $f \in \C[z^{\pm 1}]$ and a region $R \subseteq \C$ such that $\partial R$ is a closed curve satisfying $\partial R \cap \cV(f) = \emptyset$ the argument principle (see, e.g., \cite{Gamelin}) states that 
\begin{eqnarray*}
	\frac{1}{2\pi i} \int_{\partial R} \frac{f'(z)}{f(z)} dz & = & \#\text{ roots } - \# \text{ poles inside } R.
\end{eqnarray*}

Since a trinomial $f$ of the form \eqref{Equ:Trinomial} has no poles, for every $w \in \R \setminus \cA(f)$ we have $\ord(w) = k$ with $k = \max\{ 0 \leq j \leq s+t \ : \ \log|a_j| < w\}$, i.e., the number of roots of $f$ with norm smaller than $w$. Since we defined $E_{\alpha}(f)$ as the set of points in the amoeba complement with order $\alpha$, the univariate situation specializes to
\begin{eqnarray}
	E_{j}(f) & = & \{ w \in \R : \log|a_j| < w < \log|a_{j+1}| \} \ \text{ for } \ 0 \leq j \leq s+t.
\label{eq:component-univariate}
\end{eqnarray}

For trinomials with support set $A$, and restricting to the case that the coefficient $p$ is non-zero,
the sets $U_{\alpha}^A$ specialize to
\begin{eqnarray*}
	U_{j}^A & = & \{f \in T_A \cong (\C^*)^2 \ : \ |a_j| \neq |a_{j+1}|\} \, , \quad
      0 \le j \le s+t.
\end{eqnarray*}
If the context is clear, then we use the short notation $U_j$ instead of $U_j^A$. Further note
that with one exception in Section \ref{Sec:TrinomialsTopology}, which we point out explicitly, for our investigations of the sets $U_{j}^A$, it does not matter if we consider  $U_j^A$ as a subset of $(\C^*)^2$ or if we consider the slight extension allowing $p =0$. Since for $p=0$ all roots have the same norm $\sqrt[s+t]{|q|}$, we know
that in this situation all the sets $U_j^{A}$ are empty for $1 \le j \le s+t-1$.

In the univariate case, the lopsidedness condition coincides with Pellet's classical theorem (see \cite{Pellet}). Here, with respect to trinomials, we refer to Section \ref{Sec:TrinomialsClassical}, where we will see that it specializes to a classical result by Bohl (Theorem \ref{Thm:Bohl1}).

\subsection{Fibers}
\label{SubSec:Fibers}
It is a well-known fact that the $\Log|\cdot|$-map comes with a fiber bundle $(S^1)^n \to (\C^*)^n \to \R^n$ given by the homeomorphism
\begin{eqnarray*}
	\Log_\C : (\C^*)^n \to \R^n \times (S^1)^n, \quad (z_1,\ldots,z_n) \mapsto (\log_\C(z_1),\ldots,\log_\C (z_n))
\end{eqnarray*}
for some chosen local branch of the holomorphic logarithm $\log_\C: \C^* \to \C$, $z \mapsto \log|z| + i \arg(z)$ (see, e.g., \cite{Mikhalkin:Annals,Mikhalkin:Survey}; see also \cite{deWolff:Diss}). 
That is, the following diagram commutes.
\begin{eqnarray*}
\begin{xy}
	\xymatrix{
	(\C^*)^n \ar[rr]^{\Log_\C} \ar[rd]_{\Log|\cdot|} & & \R^n \times (S^1)^n \ar[ld]^{\RE} \\
	& \R^n &
	}
\end{xy}
\end{eqnarray*}

Since the fibration works component-wise, we restrict ourselves to the univariate case $n = 1$ (i.e., the fiber bundle given by $\log|\cdot|$). For a given point $w \in \R_{\ge 0}$ the fiber $\F_{w}$ is
\begin{eqnarray*}
	\F_{w} & = & \{z \in \C^* \ : \ \log|z| = w\},
\end{eqnarray*}
which is obviously homeomorphic to the complex unit circle. For this article, the key fact is that the fiber bundle induces a \textit{fiber function} $f^{v}$ for every polynomial $f = \sum_{j = -k}^d b_j  z^j \in \C[z^{\pm 1}]$ and $v \in \R_{> 0}$ given by
\begin{eqnarray*}
	  f^{v}: S^1 \to \C, \quad  \phi \mapsto f\left(e^{\log|v| + i \cdot \phi}\right) = \sum_{j = -k}^d b_j  |v|^j \cdot e^{i \cdot \phi j}.
\end{eqnarray*}

That is, $f^{v}$ is the pullback $(\iota_{v})^*(f)$ of $f$ under the homeomorphism
\begin{eqnarray*}
 \iota_{v}: S^1 \to \F_{\log|v|} \subseteq \C^*, \quad \phi \mapsto e^{\log|v| + i \cdot \phi}.
\end{eqnarray*}
The zero set $\cV(f^{v})$ satisfies
$\cV(f^{v}) \ = \ \cV((\iota_{v})^*(f)) \ = \ \cV(f) \cap \F_{\log|v|}$, and, in particular,
\begin{eqnarray}
	\log|v| \in \cA(f) & \text{ iff } & \cV(f^{v}) \neq \emptyset. \label{Equ:FiberFunction2}
\end{eqnarray}
For an overview on fiber bundles see, e.g., \cite{Hatcher,Shafarevich}.

In Section \ref{Sec:TrinomialsTopology} we also need the $\Arg$-map, the natural counterpart of the $\Log|\cdot|$-map, given by
\begin{eqnarray*}
	\Arg: (\C^*)^n \to (S^1)^n, \quad (z_1,\ldots,z_n) \mapsto (\arg(z_1),\ldots,\arg(z_n)).
\end{eqnarray*}
The key fact for us is that with the same argument as above the $\Arg$-map also yields a natural fiber bundle structure $\R^{n} \ra (\C^*)^n \ra (S^1)^n$, which can be regarded as the canonical counterpart of the fibration of the $\Log|\cdot|$-map, since the following diagram commutes
\begin{eqnarray}
& & \begin{xy}
	\xymatrix{
	(\C^*)^n \ar[rr]^{\Log_\C} \ar[rd]_{\Arg} & & \R^n \times (S^1)^n \ar[ld]^{\IM} \\
	& (S^1)^n &
	}
\end{xy}.  \label{Equ:FiberBundleArgMap}
\end{eqnarray}

\section{Classical problems, classical results and modern developments}
\label{Sec:TrinomialsClassical}
 
Since the late 19th century, the connection between the roots of trinomials (often, in particular their norms) and the choice of their coefficients was studied intensively. We compile these classical as well as some modern results. 

An initial result, which attracted people to trinomial equations, was given by Bring in 1786 \cite{Bring1} showing that every univariate quintic can be transformed into a trinomial normal form $z^5 + a z + b$ via a suitable affine transformation. This result was (independently) reproven and generalized by Jerrard in 1852 \cite{Jerrard}; the resulting normal form is known as \textit{Bring-Jerrard (quintic) form}. For additional information see, e.g., the survey \cite{Adamchik:Jeffrey}.

In 1832/33 Bolyai showed that for a trinomial of the form $z^s - z - a$ with $s \in \N_{> 1}$ and $a > 0$ the recursive sequence $(x_n)_{n \in \N}$ given by $x_0 = 0$, $x_n = \sqrt[s]{a + x_{n-1}}$ converges to one of the trinomial roots for $n \to \infty$ \cite{Bolyai}. This \textit{Bolyai algorithm} was extended by Farkas in 1881 to trinomials of the form $z^s - b z - a$ with $a > 0$ and $b \in \R^*$, where
the sequence is not always converging if $b < 0$ (see \cite{Farkas1}; also \cite{Farkas2}). See the survey \cite{Szabo} for further details.

The first article investigating the geometric properties of roots of trinomials is, to the best of our knowledge, the fundamental work \cite{Nekrassoff} by Nekrassoff from 1887. He describes how roots of trinomials $z^{s+t} + p z^t + q$ with $p,q \in \C^*$ are located in certain disjoint regions (``Contouren'') of the complex plane. In other words, he gives bounds for the norms (described by converging series) and arguments for the different roots of the trinomials in dependence of $s,t,p$ and $q$. Similar results were obtained by Kemper in 1922 in a more general article about complex roots \cite{Kemper}.

In 1907, Landau proved that the minimal norm of the roots of a trinomial of the form $z^s + p z + q$ is bounded from above by $2 |q / p|$ and hence in particular independent of $s$ \cite{Landau}. Furthermore, he proved a similar bound for the minimal norm of a root of a tetranomial. These results were generalized to arbitrary univariate polynomials by Fej\'{e}r one year later \cite{Fejer} and also by Biernaky in 1923, who gave an upper bound for the first $t$ roots of an arbitrary trinomial \cite{Biernaky}; see also \cite{Fell}. 

The inverse of this question, i.e., to determine the number of roots $k \in \N$ with norm lower than a given $v \in \R_{>0}$, can be answered with a result by Bohl from 1908, see \cite{Bohl}. Specifically, he showed the following two theorems.

\begin{thm}(Bohl 1908)
Let $f = z^{s+t} + p z^t + q$ a trinomial with $p,q \in \C$. Let $v \in \R_{> 0}$ and $k$ be
the number of roots with norm smaller than $v$. Then the following holds.
$$\begin{array}{rclcl}

	\text{If } |q|			& > & v^{s+t} + |p| \cdot v^{t}, & \text{ then } & k = 0. \\
	\text{If } v^{s+t}		& > & |q| + |p| \cdot v^{t}, & \text{ then } & k = s+t. \\
	\text{If } |p| \cdot v^{t}	& > & |q| + v^{s+t}, & \text{ then } & k = t. \\
\end{array}$$
\label{Thm:Bohl1}
\end{thm}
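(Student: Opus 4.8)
The plan is to read off all three cases from the lopsidedness condition recorded in Section~\ref{Sec:Preliminaries}, which in the univariate setting is exactly the argument principle. Parametrizing the circle of radius $v$ by $z = v e^{i\phi}$, I would evaluate the three monomials of $f$ there; their absolute values are $v^{s+t}$, $|p|\, v^t$ and $|q|$, i.e.\ precisely the entries of the lopsided list $f\{v\}$ with respective exponents $s+t$, $t$ and $0$. Each of the three hypotheses states that one of these entries strictly exceeds the sum of the other two, so $f$ is lopsided at $\log v$ in every case.

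Given lopsidedness, I would invoke the two facts already set up. First, lopsidedness at $\log v$ forces $\log v \notin \cA(f)$, so no root of $f$ has norm exactly $v$ and the count $k$ is unambiguous. Second, if the dominating entry is $|b_j v^{\alp(j)}|$ then $\ord(\log v) = \alp(j)$. Combining this with the univariate identity $\ord(w) = \#\{\text{roots of norm} < w\}$ from~\eqref{eq:component-univariate}, the exponent of the dominating monomial is exactly $k$. Hence $|q| > v^{s+t} + |p| v^t$ has dominating exponent $0$ and yields $k = 0$; $v^{s+t} > |q| + |p| v^t$ has dominating exponent $s+t$ and yields $k = s+t$; and $|p| v^t > |q| + v^{s+t}$ has dominating exponent $t$ and yields $k = t$.

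Equivalently, and staying in purely classical terms, one can run Rouch\'e's theorem directly on the circle $|z| = v$: in each case the dominating monomial $h$ satisfies $|f - h| < |h|$ on the circle, so $f$ and $h$ have the same number of zeros in $\{|z| < v\}$, namely $0$, $s+t$, or $t$ according to whether $h$ is $q$, $z^{s+t}$, or $p z^t$. This is the route I expect Bohl's original treatment to follow, and it makes the winding-number interpretation fully transparent.

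There is essentially no hard step here; the only points requiring care are that it is the \emph{strict} inequalities that guarantee $\log v \notin \cA(f)$, so that $k$ is well-defined and equals the winding number rather than jumping as $v$ crosses the boundary, and that the three regions, while pairwise disjoint, are by no means exhaustive. The complementary region---where no single monomial dominates---is precisely where $\log v$ may lie in $\cA(f)$ and where the finer geometry developed in the later sections (hypotrochoids, coincident norms, and the topology of the $U_j^A$) actually lives; this theorem deliberately does not address it.
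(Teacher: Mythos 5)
Your proposal is correct and matches the paper's own treatment: the paper does not prove Theorem~\ref{Thm:Bohl1} in detail but observes precisely your argument, namely that each hypothesis makes $f$ lopsided at $\log v$, so $\log v \notin \cA(f)$ and $\ord(\log v)$ equals the exponent of the dominating entry of $f\{v\}$, which by the univariate argument principle (equation~\eqref{eq:component-univariate}) is the root count $k$; this is also why the paper identifies the statement with Pellet's theorem. Your alternative Rouch\'e argument is the standard classical proof of that same lopsidedness/Pellet statement, and your cautionary remarks (strict inequalities, non-exhaustiveness of the three cases) are accurate.
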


This first theorem was already known before, since it is a special instance of Pellet's Theorem 
(\cite{Pellet}; see also \cite{Marden:Book}), which is concerned with
arbitrary, univariate polynomials.

Note that, from the viewpoint of amoeba theory, this theorem is also obvious since it treats the situation 
that $f$ is lopsided at $v$ and $k = \ord(\log|v|)$, which coincides with the exponent of the dominating term of the list $f\{v\}$ (see Section \ref{SubSec:Amoebas}). In other words, Theorem~\ref{Thm:Bohl1} is exactly the classical representation of lopsidedness from amoeba theory \cite{Purbhoo} for the special case of univariate trinomials.

The interesting, nontrivial case is described in a second statement. If none of the upper inequalities in Theorem \ref{Thm:Bohl1} holds, then there exists a (possibly degenerate) triangle $\Delta$ with edges of lengths $v^{s+t}, |p| \cdot v^t$ and $|q|$. Let $\alp=\measuredangle(|p|\cdot v^t,|q|)$
and $\beta = \measuredangle(v^{s+t},|q|)$.

\begin{thm}(Bohl 1908)
Let the notation be as in Theorem \ref{Thm:Bohl1}. If there exists a triangle $\Delta$ 
with edge lengths $v^{s+t}, |p| \cdot v^t$ and $|q|$, then the number $k$ of roots with norm smaller than $v \in \R_{>0}$ is given by the number of integers located in the open interval with endpoints
\begin{eqnarray}
\frac{(s+t)(\pi + \arg(p) - \arg(q)) - t(\pi -\arg(q))}{2\pi} - \frac{(s+t)\alp + t\beta}{2\pi} \label{Equ:Bohl1}
\end{eqnarray}
and 
\begin{eqnarray}
\frac{(s+t)(\pi + \arg(p) - \arg(q)) - t(\pi -\arg(q))}{2\pi} + \frac{(s+t)\alp + t\beta}{2\pi}. \label{Equ:Bohl2}	
\end{eqnarray}
\label{Thm:Bohl2}
\end{thm}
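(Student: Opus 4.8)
The plan is to compute $k$ as a winding number via the argument principle and then to match it, by a continuity-in-$v$ argument, with the number $R$ of integers in the open interval $(M-W,M+W)$, where I abbreviate the center and half-width coming from \eqref{Equ:Bohl1}--\eqref{Equ:Bohl2} by
\[
M=\frac{(s+t)(\pi+\arg(p)-\arg(q))-t(\pi-\arg(q))}{2\pi},\qquad W=\frac{(s+t)\alpha+t\beta}{2\pi}.
\]
A short simplification gives the cleaner form $2\pi M=s\pi+(s+t)\arg(p)-s\arg(q)$, which in particular is \emph{independent of $v$}; only $W=W(v)$ varies as $v$ changes. First I would apply the argument principle on the circle $|z|=v$. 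As recalled in Section~\ref{SubSec:Amoebas}, for $\log|v|\notin\cA(f)$ the number $k$ of roots with norm $<v$ equals $\ord(\log|v|)$, i.e.\ the winding number around the origin of the fiber function
\[
f^{v}(\phi)=v^{s+t}e^{i(s+t)\phi}+p\,v^{t}e^{it\phi}+q,\qquad \phi\in[0,2\pi).
\]
Thus everything reduces to the planar closed curve $f^{v}$, which is the sum of three vectors $z_{1}=v^{s+t}e^{i(s+t)\phi}$, $z_{2}=p\,v^{t}e^{it\phi}$, $z_{3}=q$ of fixed lengths $v^{s+t}$, $|p|v^{t}$, $|q|$.

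The geometric heart of the argument is that $f^{v}$ passes through the origin for some $\phi$ --- equivalently (cf.\ \eqref{Equ:FiberFunction2}), $f$ has a root of norm exactly $v$ --- precisely when $z_{1},z_{2},z_{3}$ close up into the triangle $\Delta$. Expressing the interior angles of $\Delta$ through $\alpha=\measuredangle(|p|v^{t},|q|)$ and $\beta=\measuredangle(v^{s+t},|q|)$, the closing condition, in the two orientations of $\Delta$, becomes
\[
s\phi\equiv\arg(p)\mp(\alpha+\beta),\qquad t\phi\equiv\arg(q)-\arg(p)\mp(\pi-\alpha)\pmod{2\pi}
\]
(upper signs for one orientation, lower for the other). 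Here I would crucially use $\gcd(s,t)=1$: eliminating $\phi$ from the pair (multiply the first by $t$, the second by $s$, subtract, and use $s\Z+t\Z=\Z$) collapses the system to the single condition $M-W\in\Z$ for one orientation and $M+W\in\Z$ for the other. Hence the values of $v$ at which a root lies exactly on the circle are exactly those at which an endpoint of the interval is an integer. Moreover, since $\gcd(s,t)=1$ forces the two congruences to have a \emph{unique} common solution $\phi$ modulo $2\pi$, each feasible orientation contributes exactly one root on the circle. This also explains why the \emph{open} interval is correct: at a root norm the corresponding root has norm $=v$ (not $<v$) and the matching endpoint is an integer, excluded from the open interval.

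To finish I would run a continuity argument in $v$ with $p,q$ fixed. Both $k(v)$ and $R(v)$ are integer valued and, by the previous step, locally constant off the same discrete set of critical $v$. As $v$ decreases to the lower boundary of the triangle regime one has $|q|=v^{s+t}+|p|v^{t}$, so $\alpha,\beta\to0$, $W\to0$, the open interval shrinks to a single point (hence contains no integer, giving $R=0$), while Theorem~\ref{Thm:Bohl1} gives $k=0$; symmetrically $k=R=s+t$ at the upper boundary, which serves as a consistency check. Since the roots $a_{1},\dots,a_{s+t}$ are fixed, raising the threshold $v$ past some $|a_{j}|$ always moves $a_{j}$ from outside to inside the disk, so every jump of $k$ is by $+(\text{multiplicity})$; by the correspondence the matching endpoint then crosses an integer \emph{outward}, which forces $W$ to be increasing there, so $R$ jumps by the same amount. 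Integrating the jumps up from the anchor $k=R=0$ yields $k\equiv R$ on the triangle regime, as claimed.

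The hard part will be the bookkeeping in this last step: one must verify that the jumps of $k$ and $R$ agree not merely in sign but in size at every critical $v$, including degenerate configurations. Generically only one orientation is feasible and a single simple root crosses, matching one integer event; but when both endpoints meet integers simultaneously two roots of equal norm cross at once, and when a root is double the triangle degenerates to a hypocycloid with a cusp (the endpoint becoming tangent to an integer level). Establishing the correct $+1$/$+2$ matching in these coincident and multiple-root cases --- rather than the clean reduction to the winding number --- is the real obstacle, and it is where the discriminant analysis and the hypotrochoid/hypocycloid description from the surrounding sections would be invoked to control multiplicities.
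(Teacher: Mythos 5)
The first thing to say is that there is no in-paper proof to compare against: the paper states Theorem~\ref{Thm:Bohl2} as a classical result and cites Bohl's 1908 paper \cite{Bohl} for it, so your attempt must be judged on its own merits. On those merits, your first two steps are correct and well executed. The argument-principle reduction of $k$ to the winding number of the fiber function $f^v$ is exactly right, your simplification $2\pi M = s\pi + (s+t)\arg(p) - s\arg(q)$ is correct (and consistent with how the paper later uses this midpoint in the proof of Theorem~\ref{Thm:ComplementIn1Fan}), and the coprimality elimination is the genuinely substantive point: for the counterclockwise closure one gets $t\phi \equiv \arg(q)-\arg(p)-(\pi-\alpha)$ and $(s+t)\phi \equiv \arg(q)+\pi-\beta \pmod{2\pi}$, and eliminating $\phi$ using $\gcd(s,t)=1$ does collapse the system to $M-W(v)\in\Z$, with $M+W(v)\in\Z$ for the opposite orientation and a unique $\phi$ modulo $2\pi$ in each feasible case. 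So "root of norm exactly $v$ iff an endpoint of the interval is an integer" is established.

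The proof is nevertheless incomplete, and the decisive gap is the one you yourself flag at the end. First, your jump-matching argument is circular: you claim the matching endpoint "crosses an integer \emph{outward}, which forces $W$ to be increasing there," but the direction of the crossing is precisely what needs proof; if $W$ were decreasing at a critical $v$, then $R$ would drop while $k$ cannot decrease, and nothing in your argument rules this out except the theorem you are trying to prove. You would need an explicit computation of $dW/dv$ at critical values, or better, a direct computation of the winding number of $f^v$ for non-critical $v$ (e.g.\ by counting signed crossings of a ray, which is presumably closer to Bohl's original argument), which would make the continuity bookkeeping unnecessary. Second, your anchor is off in a boundary case: for $v$ slightly above the lower boundary $v_-$ (where $|q|=v^{s+t}+|p|v^t$), if $M\in\Z$ then the open interval $(M-W,M+W)$ contains the integer $M$ for every $W>0$, so $R=1$ there, not $0$; correspondingly a simple root of norm exactly $v_-$ exists (by your own step-2 equivalence with $W(v_-)=0$) and $k=1$. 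The identity $k=R$ survives, but your stated anchor $k=R=0$ does not. Third, the triangle regime need not be an interval: when $|p|$ is large it splits into two components separated by the lopsided region $|p|v^t > |q|+v^{s+t}$, where Theorem~\ref{Thm:Bohl1} gives $k=t$ and where $W\to t/2$ at the inner boundaries; your "integration of jumps from $v_-$" breaks across this gap and requires re-anchoring at $k=t$ --- and this inner boundary is exactly where double roots live (cf.\ the relation \eqref{eq:pajt} in the paper), i.e.\ the multiplicity-two case you explicitly leave open. So the proposal is a plausible and partially correct reconstruction, but not yet a proof.
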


To illustrate the theorem, we give an example.

\begin{exa}
Let $f = z^3 + z + \sqrt{2}$ and $v = 1$. Then $\alp = \beta = \pi / 4$. Thus, $k$ is the number of integers between
\begin{eqnarray*}
\frac{3(\pi + 0 - 0 - \pi)}{2\pi} - \frac{3/4 \pi + 1/4 \pi}{2\pi}	& =	& -\frac{1}{2} \quad \text{ and }\\
\frac{3(\pi + 0 - 0 - \pi)}{2\pi}  + \frac{3/4 \pi + 1/4 \pi}{2\pi}	& =	& +\frac{1}{2}.
\end{eqnarray*}
Since this is only the origin, we have $k = 1$. A double check with \textsc{Maple} yields that the roots of $f$ have approximately norm 
$$0.83403883, 1.30216004 \text{ and } 1.30216004.$$
\end{exa}

Unfortunately, these theorems give, using the notation from Section \ref{SubSec:Amoebas}, no explanation for the geometric or topological structure of the parameter space $T_A$ or the sets $U_j^A$ in it. Amazingly, despite the fact that these theorems were proven over a century ago and people kept on investigating trinomials until nowadays (see below), no evident progress was made with respect to this geometric and topological structure. This fact will be the initial point for our own investigation.

Prior to this, we recall some fundamental results by Egerv\'{a}ry \cite{Egervary1,Egervary2,Egervary3,Egervary4} from 1922--1931 about trinomials. Again, we refer to the survey \cite{Szabo} by Szab\'{o}, where these classical results (partially written in Hungarian in the original) are presented in modern terminology. Egerv\'{a}ry calls two trinomials $f_1,f_2$ with coefficients $p_1,q_1$ and $p_2,q_2 \in \C^*$ and (both) with exponents $s,t$ coprime \textit{equivalent} if and only if
\begin{eqnarray*}
	f_1(z) \ = \ f_2(z \cdot e^{i \psi}) \ \text{ or } \ f_1(z) \ = \ \ovl{f_2}(z \cdot e^{i \psi})
\end{eqnarray*}
for some $\psi \in \R$.

\begin{thm}(Egerv\'{a}ry 1930)
For trinomials $f_1,f_2$ given as above, the following holds:
\begin{enumerate}
	\item $f_1$ and $f_2$ are equivalent if and only if
	\begin{eqnarray*}
		-(s+t)(\arg(p_1) \pm \arg(p_2)) + s(\arg(q_1) \pm \arg(q_2)) & \equiv & 0 \mod 2\pi.
	\end{eqnarray*}
	\item If $|p_1| = |p_2|$ and $|q_1| = |q_2|$, then the roots of $f_1$ and $f_2$ have the same
      norms if and only if $f_1$ and $f_2$ are equivalent. 
      Further, a trinomial has two roots with the same norm if and only if it is equivalent to a real trinomial.
	\item A trinomial has a root of multiplicity larger than one if and only if the coefficients $p,q$ of its
      equivalent real trinomial satisfy $(-1)^{s+t} q^s (s+t)^{s+t} = p^{s+t} s^s t^t$.
\end{enumerate}
\label{Thm:Egervary1}
\end{thm}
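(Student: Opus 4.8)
The plan is to treat the three parts separately, disposing of (1) and (3) by essentially direct algebra and reserving the real work for (2).

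For part (1) I would start from the substitution $z \mapsto z e^{i\psi}$, which gives $f_2(z e^{i\psi}) = e^{i(s+t)\psi}z^{s+t} + p_2 e^{it\psi}z^t + q_2$; renormalizing to a monic trinomial (equivalently, passing to the monic trinomial whose roots are those of $f_2$ rotated by $e^{-i\psi}$) turns the coefficients into $p_1 = p_2 e^{-is\psi}$, $q_1 = q_2 e^{-i(s+t)\psi}$, with $p_2,q_2$ replaced by $\overline{p_2},\overline{q_2}$ in the conjugated case. In particular $|p_1|=|p_2|$ and $|q_1|=|q_2|$, while comparing arguments yields the two linear congruences $s\psi \equiv \pm\arg p_2 - \arg p_1$ and $(s+t)\psi \equiv \pm\arg q_2 - \arg q_1 \pmod{2\pi}$. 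Multiplying the first by $s+t$, the second by $s$, and subtracting eliminates $\psi$ and produces, in the two cases, exactly the conditions $-(s+t)(\arg p_1 \pm \arg p_2) + s(\arg q_1 \pm \arg q_2) \equiv 0 \pmod{2\pi}$. For the converse, $\gcd(s,t)=1$ forces $\gcd(s,s+t)=1$, so choosing integers $a,b$ with $as+b(s+t)=1$ and setting $\psi$ to be the corresponding integer combination of the right-hand sides recovers a solution of both congruences precisely when the stated condition holds.

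For part (2) the forward direction is immediate, since a rotation (and in the conjugated case a reflection) of the roots preserves their norms. For the converse I would read the norms off Bohl's second theorem. Writing $k_f(v)$ for the number of roots of $f$ of norm below $v$, Theorem \ref{Thm:Bohl2} counts the integers in the symmetric interval of half-width $h(v)=\tfrac{(s+t)\alpha+t\beta}{2\pi}$ about the center $C_f = \tfrac{(s+t)\arg p - s\arg q}{2\pi} + \tfrac{s}{2}$, obtained by simplifying the numerators of \eqref{Equ:Bohl1}--\eqref{Equ:Bohl2}. The crucial point is that the triangle angles $\alpha,\beta$, hence $h(v)$, depend only on $|p|,|q|,v$; so if $|p_1|=|p_2|$ and $|q_1|=|q_2|$, then $f_1,f_2$ share the same half-width function and differ only through their centers. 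Since the root norms are exactly the values of $v$ at which $k_f$ jumps, and a symmetric interval of prescribed width produces the same count function when centered at $C$ or at $C'$ precisely when $C' \equiv \pm C \pmod 1$, matching root norms forces $C_1 \equiv \pm C_2 \pmod 1$, which unwinds to the ``$-$'' and ``$+$'' conditions of part (1). For the second assertion, a real trinomial has $\arg p,\arg q \in \{0,\pi\}$, so $C_f \in \tfrac12\Z$; conversely the $\gcd$ argument of part (1), now read modulo $\pi$, lets one rotate any $f$ with $C_f \in \tfrac12\Z$ to a real trinomial. Finally, a growing symmetric interval captures two integers simultaneously exactly when $C_f \in \tfrac12\Z$, i.e.\ when two roots cross the same norm at once; hence $f$ has two roots of equal norm iff $C_f \in \tfrac12\Z$ iff $f$ is equivalent to a real trinomial.

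For part (3) a root of multiplicity $>1$ is a common zero of $f$ and $f' = z^{t-1}\bigl((s+t)z^s + pt\bigr)$. As $q\neq 0$ rules out $z=0$, such a zero satisfies $z^s = -\tfrac{pt}{s+t}$ and, after substitution back into $f$, also $z^t = -\tfrac{q(s+t)}{ps}$; eliminating $z$ through $(z^s)^t = (z^t)^s$ and simplifying gives the single equation $(-1)^{s+t}q^s(s+t)^{s+t} = p^{s+t}s^s t^t$, which characterizes the presence of a multiple root for an arbitrary trinomial. Since root multiplicities are invariant under the rotations and reflections defining equivalence, and since a multiple root in particular yields two roots of equal norm, part (2) lets me pass to the equivalent real trinomial, for which this becomes the stated real relation, completing part (3).

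The hard part will be the converse in part (2): upgrading the qualitative ``equal root norms'' into the arithmetic condition $C_1 \equiv \pm C_2 \pmod 1$. One must check that, as $v$ ranges over the interval on which the Bohl triangle is nondegenerate, the half-width $h(v)$ sweeps a range long enough---dictated by the fact that $k_f$ must climb from $0$ to $s+t$---that the jump pattern of $k_f$ genuinely determines $C_f \bmod 1$ up to sign rather than leaving an ambiguity, and that a simultaneous integer crossing at both endpoints really corresponds to a coincidence of two root norms. Once this analysis of the count function is secured, the remaining parts are routine algebra.
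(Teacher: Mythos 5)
First, a structural point: the paper does not prove Theorem~\ref{Thm:Egervary1} at all --- it is quoted as a classical result of Egerv\'{a}ry via Szab\'{o}'s survey --- so there is no in-paper proof to match; the closest in-paper material is exactly the machinery you use. Your part (1) (coefficient transformation $p_1 = p_2e^{-is\psi}$, $q_1 = q_2e^{-i(s+t)\psi}$, elimination of $\psi$ via B\'ezout with $\gcd(s,s+t)=1$) is correct, with the small caveat that the converse also needs the equal-moduli hypothesis, and your ``midpoint'' quantity $C_f=\tfrac{s}{2}+\tfrac{(s+t)\arg p-s\arg q}{2\pi}$ with the observation $2C_f\in\Z$ is literally the computation in the paper's proof of Theorem~\ref{Thm:ComplementIn1Fan}. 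Part (3) is also essentially complete: your elimination $(z^s)^t=(z^t)^s$ rederives the Greenfield--Drucker formula of Lemma~\ref{Lem:Discriminant}; for the converse direction you should say explicitly that $\gcd(s,t)=1$ guarantees a simultaneous solution of $z^s=-tp/(s+t)$ and $z^t=-q(s+t)/(ps)$ once the compatibility relation holds.

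The genuine gap is in part (2), and it is not merely the verification you deferred --- the step fails as stated. Your claim ``a growing symmetric interval captures two integers simultaneously exactly when $C_f\in\tfrac12\Z$, hence $f$ has two roots of equal norm iff $C_f\in\tfrac12\Z$'' is false, because the double capture may be scheduled at a half-width $h$ that is never attained: the $v$-range on which the Bohl triangle exists splits into two pieces whenever $|p|v^t$ dominates (Theorem~\ref{Thm:Bohl1}), and the omitted widths are precisely those at which the pair $j=t$ would collide. Concretely, $f=z^3-6z+1$ is a real trinomial (so $C_f=\tfrac52\in\tfrac12\Z$), yet its roots have norms approximately $0.167$, $2.36$, $2.53$, all distinct: here the only odd index is $j=1=t$, and $f$ is lopsided at $v=1$ with dominating term $6v$. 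This is exactly the paper's $U_t^A$ exception: Theorem~\ref{Thm:LocalStructureUalpha} adds the lopsidedness proviso for $j=t$, and the remark after Corollary~\ref{Cor:LocalStructureUzero} shows that for $s=t=1$ one additionally needs $|p|\le 2|q|^{1/2}$. So the second sentence of (2), read literally, requires a qualification for $s+t\le 3$ (for $s+t\ge 4$ there is always an index $j\neq t$ of each parity, and Theorem~\ref{Thm:LocalStructureUalpha} then does give your biconditional), and your argument cannot close without either restricting to pairs $j\neq t$ or adding the norm bound on $|p|$. Relatedly, in the first claim of (2) you must also account for jumps of the count function at degenerate-triangle values of $v$ (where, by the analysis in Proposition~\ref{Prop:Atmost2Roots}, single roots occur): such jumps happen at the same $v$ for $f_1$ and $f_2$ regardless of arguments and carry no information about $C_f$, so you need to verify that enough interior jumps exist before concluding $C_1\equiv\pm C_2\ (\mathrm{mod}\ 1)$.
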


Egerv\'{a}ry showed not only algebraic properties of the roots of trinomials, but also gave a beautiful geometric description of their location in the complex plane, which explain why roots are located in the sections, which were described by Nekrassoff. For a trinomial $f$ of the 
form \eqref{Equ:Trinomial}, we define two polytopes in the complex plane
\begin{eqnarray*}
	P_s     & = & \conv\left\{\sqrt[s]{\frac{(2s + t)|p|}{s+t}} \cdot e^{i \frac{\arg(p) + (2j + 1)\pi}{s}} \ : \ 1 \leq j \leq s \right\}, \\
	P_{s+t} & = & \conv\left\{\sqrt[s+t]{\frac{(2s + t)|q|}{s}} \cdot e^{i \frac{\arg(q) + (2j + 1)\pi}{s+t}} \ : \ 1 \leq j \leq s+t \right\}.
\end{eqnarray*}

\begin{thm}(Egerv\'{a}ry 1930)
Let $f$ be of the form \eqref{Equ:Trinomial}. Then the roots of $f$ are exactly the equilibrium points of the force field of the unit masses at the vertices of $P_s$ and $P_{s+t}$.
\label{Thm:Egervary2}
\end{thm}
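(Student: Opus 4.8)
The plan is to realize the theorem as an instance of the classical electrostatic (logarithmic potential) interpretation of the critical points of a polynomial. I would place unit masses at the $2s+t$ vertices $a_1,\dots,a_{2s+t}$ of $P_s$ and $P_{s+t}$. Under the planar inverse-distance law the force exerted on a test unit mass at $z$ is $\overline{\sum_k 1/(z-a_k)}$, so the equilibrium points are exactly the zeros of the field $\Phi(z)=\sum_k 1/(z-a_k)=g'(z)/g(z)$, where $g(z)=\prod_k(z-a_k)$; equivalently, they are the critical points of $g$ that do not sit on a mass. Thus the whole statement reduces to computing $g$ and its derivative.

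First I would identify the two polynomials cutting out the vertex sets. The $s$ vertices of $P_s$ all satisfy $u^s=-\tfrac{(2s+t)p}{s+t}$: raising a prescribed vertex to the $s$-th power turns its angular part into $e^{i\arg(p)}e^{i(2j+1)\pi}=-p/|p|$ and its modulus into $r_s^s=\tfrac{(2s+t)|p|}{s+t}$. As they are $s$ equally spaced points they form the complete set of $s$-th roots of this value, whence $\prod_{u}(z-u)=z^s+\tfrac{(2s+t)p}{s+t}$. The same computation for $P_{s+t}$ gives $\prod_{w}(z-w)=z^{s+t}+\tfrac{(2s+t)q}{s}$. Writing $A=\tfrac{(2s+t)p}{s+t}$ and $B=\tfrac{(2s+t)q}{s}$, we have $g(z)=(z^s+A)(z^{s+t}+B)$, and its roots are precisely the masses.

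The core step is then a short differentiation. Expanding $g'(z)=sz^{s-1}(z^{s+t}+B)+(s+t)z^{s+t-1}(z^s+A)$ and collecting terms yields $g'(z)=z^{s-1}\bigl[(2s+t)z^{s+t}+(s+t)A\,z^{t}+sB\bigr]$. The calibration of the radii is exactly what makes $(s+t)A=(2s+t)p$ and $sB=(2s+t)q$, so the bracket equals $(2s+t)(z^{s+t}+pz^t+q)=(2s+t)f(z)$; that is, $g'(z)=(2s+t)z^{s-1}f(z)$ and $\Phi(z)=\tfrac{(2s+t)z^{s-1}f(z)}{g(z)}$. Hence the finite zeros of $\Phi$ are the $s+t$ roots of $f$ together with $z=0$ of multiplicity $s-1$.

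To finish, I would read off the equilibrium points from this factorization. Since $p,q\neq0$ we have $g(0)=AB\neq0$, so no mass sits at the origin; thus for $s\ge2$ the origin is a (symmetry) equilibrium of multiplicity $s-1$ that must be set aside, and for $s=1$ it does not occur at all, so the statement holds literally. The genuinely new equilibria are the zeros of $f$, and to conclude that these are all present one checks they are not cancelled by $g$, i.e. that a root of $f$ cannot coincide with a vertex of $P_s$ or $P_{s+t}$. I expect this last point --- disentangling the trivial origin equilibrium and excluding coincidences between roots of $f$ and the masses --- to be the only real obstacle; substituting $z_0^s=-A$ into $f(z_0)=0$ forces the exceptional relation $z_0^t=\tfrac{(s+t)q}{sp}$ (and symmetrically for $P_{s+t}$), which one rules out generically or handles by a limiting/continuity argument, leaving the roots of $f$ as exactly the nontrivial equilibrium points.
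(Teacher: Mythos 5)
The paper contains no proof of this statement: Theorem~\ref{Thm:Egervary2} is quoted as a classical result of Egerv\'ary (via Szab\'o's survey), so there is no internal argument to compare yours against; you have supplied the standard logarithmic-potential proof, and your algebra is correct. The vertex sets of $P_s$ and $P_{s+t}$ are precisely the root sets of $z^s+\tfrac{(2s+t)p}{s+t}$ and $z^{s+t}+\tfrac{(2s+t)q}{s}$, and for $g$ their product one indeed gets $g'(z)=(2s+t)\,z^{s-1}f(z)$, so the zeros of the field $g'/g$ away from the masses are the roots of $f$ together with the origin, the latter of multiplicity $s-1$.

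Two caveats, both of which you saw but only one of which you resolved. First, the central equilibrium at $0$ for $s\ge 2$ is unavoidable: the factor $z^{s-1}$ arises for any choice of radii or masses on two origin-centered regular polygons, so the word ``exactly'' in the statement must be read as excluding the common center of symmetry; your decision to set it aside is the correct reading of the classical theorem, but it is an interpretive convention about the statement, not something a sharper argument can remove. Second, your plan to dispose of root/vertex coincidences ``generically or by a limiting/continuity argument'' cannot be completed as written, because on the exceptional locus the literal statement genuinely fails rather than degenerates. Your two relations $z_0^s=-\tfrac{(2s+t)p}{s+t}$ and $z_0^t=\tfrac{(s+t)q}{sp}$ combine to $z_0^{s+t}=-\tfrac{(2s+t)q}{s}$: a root of $f$ lies on a vertex of $P_s$ if and only if that point is simultaneously a vertex of $P_{s+t}$, which happens exactly when $(-1)^t(2s+t)^t s^s\,p^{s+t}=(s+t)^{s+t}q^s$. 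There $g$ has a double zero at $z_0$, hence $g'$ a simple zero, hence $f(z_0)=0$ automatically --- yet $z_0$ is a simple pole of the field, not an equilibrium. Concretely, for $s=t=1$, $p=1$, $q=-3/4$ one has $f=(z+3/2)(z-1/2)$, masses at $-3/2$ (doubled) and $3/2$, and field $\tfrac{3(z-1/2)}{(z+3/2)(z-3/2)}$: the root $-3/2$ of $f$ is a pole, not an equilibrium, so no continuity argument restores the claim there. The clean way to finish is to restrict to the complement of this proper algebraic subset of the $(p,q)$-space, where your zero count is exact ($\deg g'=2s+t-1=(s-1)+(s+t)$, accounting for the origin and all $s+t$ roots of $f$), and to record the coincidence locus, like the origin, as an explicit exception to the literal wording.
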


The investigation of trinomials went on in modern times. In 1980 Fell gave a geometric description of trajectories of roots of real trinomials in the complex plane under changing their coefficients \cite{Fell}. In 1992 Dilcher, Nulton and Stolarsky study (among other things) the zero distribution of the special trinomial $t z^{s+t} - (s+t) z^t + s$ with $s,t \in \N^*$ \cite{Dilcher:Nulton:Stolarsky}. And recently, in 2012, Melman improved Nekrassoffs results in \cite{Melman}.
\smallskip

\section{The Local Structure of the Parameter Space of Trinomials}
\label{Sec:TrinomialsModulis}

Given $A = \{0,s,s+t\}$, our goal is to describe the space $T_A$ of trinomials. More precisely, we 
determine the geometry of all trinomials with a root of a certain norm as well as the geometry and topology of the sets $U_j^A$ in $T_A$, as defined in the Introduction.

First we investigate the special case of a fixed $q \in \C^*$. In other words, we study $\C$-slices
\[
  (T_A)_q \ = \ \{ f = x^{s+t} + p x^t + q \, : \, p \in \C \} \ \cong \ \C
\]
 of $T_A$ (or $\C^*$-slices in case of assuming $p \neq 0$) 
and solve the initial questions locally along this slice. This allows us to provide two key results 
answering Problems (A) and (B).

We first observe that $f$ has a root with norm $v \in \R_{> 0}$ if and only if the coefficient $p$ of $f$ is located on a certain \textit{hypotrochoid curve} depending on $s,t,q$ and $v$, which is located in the
$\C$-slice $(T_A)_q$ of the parameter space $T_A$ (Theorem \ref{Thm:Hypotrochoid}). Secondly, we show that $f$ has two roots with identical norm if and only if $p$ is located on a union of rays in $\C \cong \R^2$, which yields the desired local description of the sets $U_j^A$ and their complements  (Theorems \ref{Thm:ComplementIn1Fan} and \ref{Thm:LocalStructureUalpha}). This union of rays in the $\C$-slice 
$(T_A)_q$ of the parameter space $T_A$ is precisely the geometric picture that corresponds to Egerv\'{a}ry's Theorem \ref{Thm:Egervary1} (2), which we already sketched at the end of 
Section~\ref{Sec:TrinomialsClassical}.

By combining both results, we show that $f$ has two roots of the same norm $v$ if $p$ is located on a singularity of the particular hypotrochoid corresponding to $v$ (Theorem \ref{Thm:MultipleNormsCorrespondToSingularities}). As a corollary we re-prove a classical result by Sommerville on the location of singularities on hypotrochoid curves (Corollary \ref{Prop:NodesonFan}). Furthermore, we show a result similar to Theorem \ref{Thm:Hypotrochoid} involving epitrochoids instead of hypotrochoids for the $\C^*$-slice $(T_A)_p$ of $T_A$ given by fixing the coefficient $p$ instead of $q$ (Theorem \ref{Thm:Epitrochoid}). Finally, we deduce some results about the discriminant of trinomials (Corollaries \ref{Cor:Discriminant} and \ref{Cor:LocalStructureUzero}).

Recall that a \textit{hypotrochoid} with parameters $R,r \in \Q_{> 0}$, $d \in \R_{> 0}$ satisfying $R \geq r$ is the parametric curve $\gamma$ in $\R^2 \cong \C$ given by
\begin{eqnarray}
	\gamma: [0,2\pi) \ra \C, \quad \phi \mapsto (R - r) \cdot e^{i \cdot \phi} + d \cdot e^{i \cdot \lf(\frac{r - R}{r}\ri) \cdot \phi}.
	\label{Equ:Hypotrochoid}
\end{eqnarray}
See Figure \ref{Fig:HypotrochoidExamples} for some examples and
references~\cite{Brieskorn:Knoerrer,Fladt} for detailed information.
Geometrically, a hypotrochoid is the trajectory of some fixed point with distance $d$ from the center of a circle with radius $r$ rolling in the interior of a circle with radius $R > r$ (see Figure \ref{Fig:HypotrochoidExplanation}).
Further note that hypotrochoids belong to the family of roulette curves, see \cite[Chapter 17]{Lockwood} for an overview. Hypotrochoids have certain well-known special instances themselves, in particular \textit{ellipses} (if $R = 2r$), \textit{hypocycloids} (if $d = r$) and \textit{rhodonea curves} (or \textit{rose curves}; if $R -r = d$). 
We say that a curve $\gamma$ is a \textit{hypotrochoid up to a rotation} if there exists some reparametrization $\rho_k : [0,2\pi) \ra [0,2\pi), \phi \mapsto k + \phi \mod 2 \pi$ with $k \in [0,2\pi)$, such that $\gamma \circ \rho_k^{-1}$ is a hypotrochoid.

\begin{figure}[ht]
\ifpictures
\includegraphics[width=0.3\linewidth]{./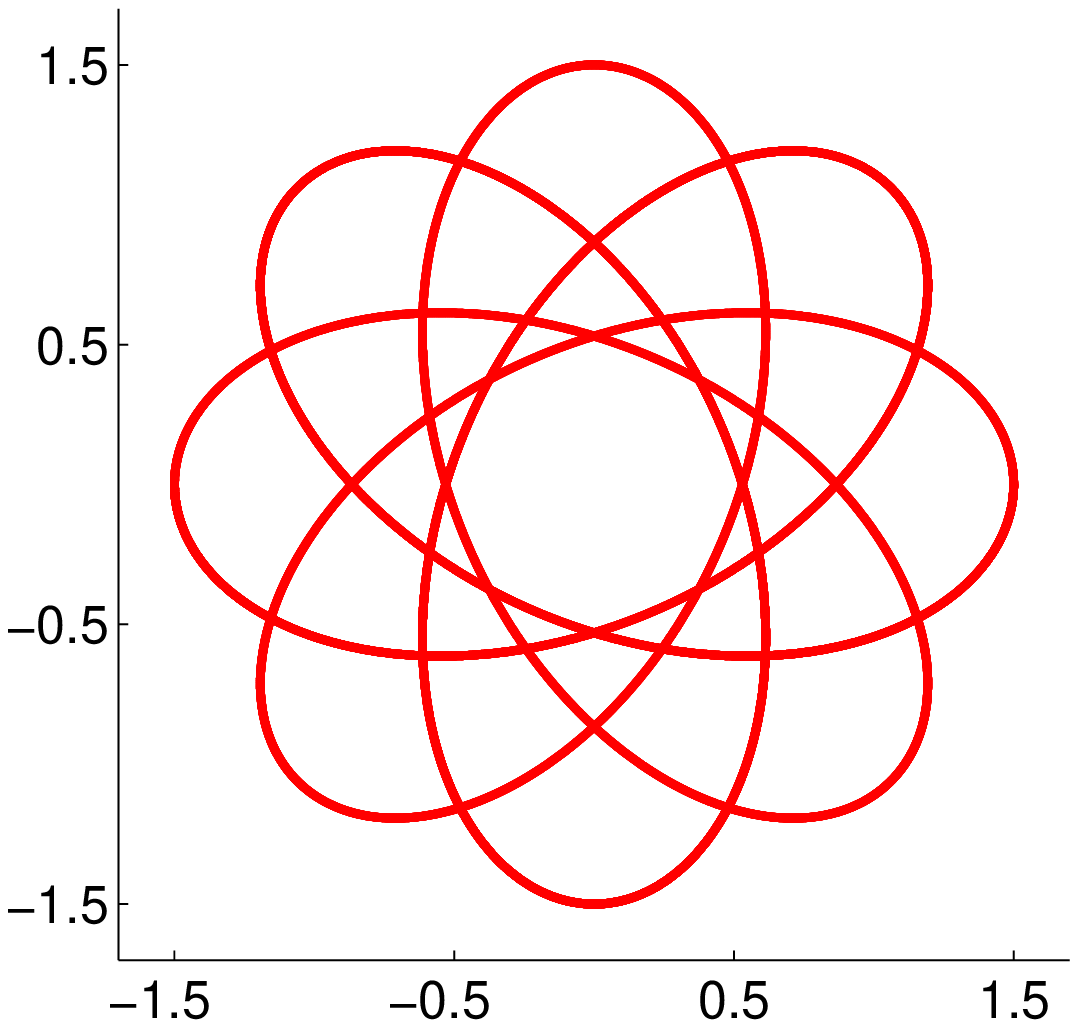}
\includegraphics[width=0.3\linewidth]{./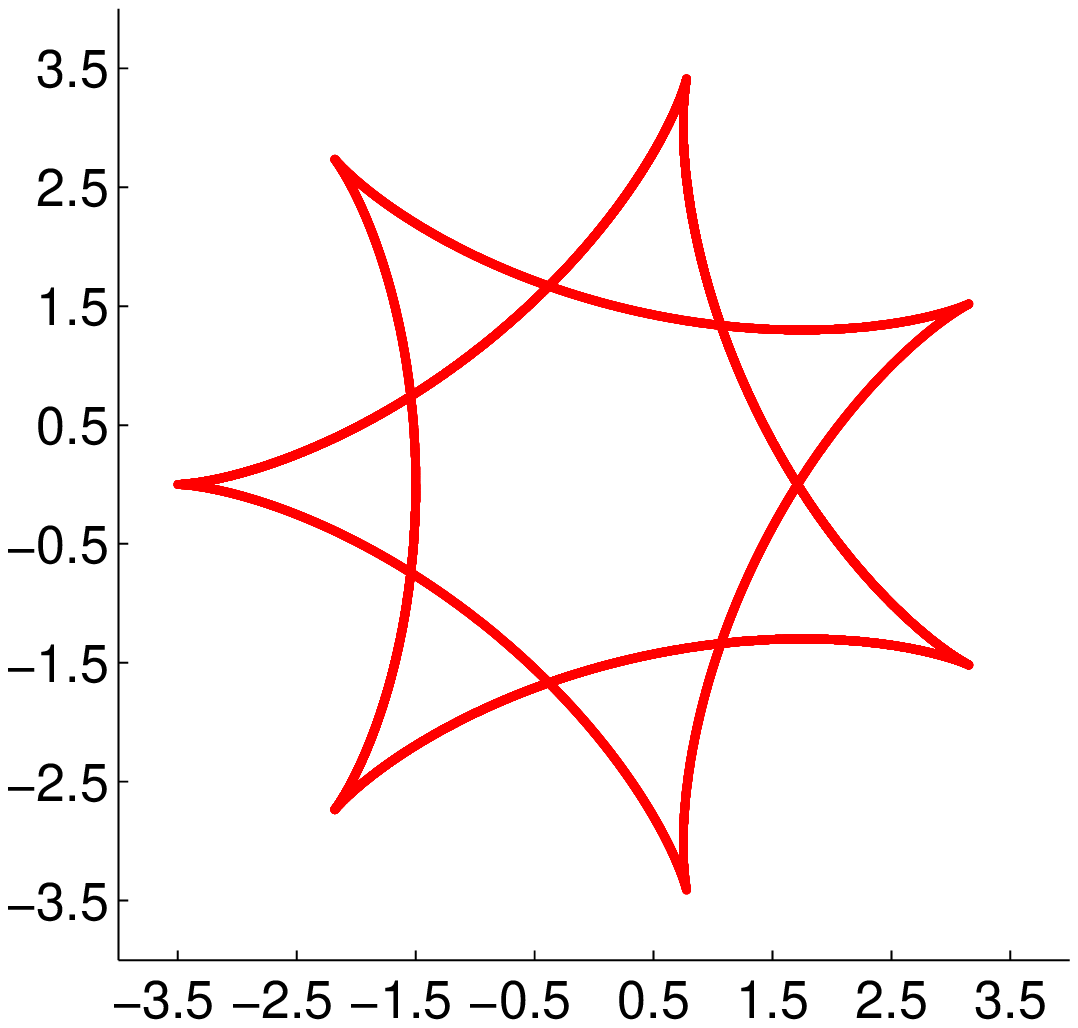}
\includegraphics[width=0.3\linewidth]{./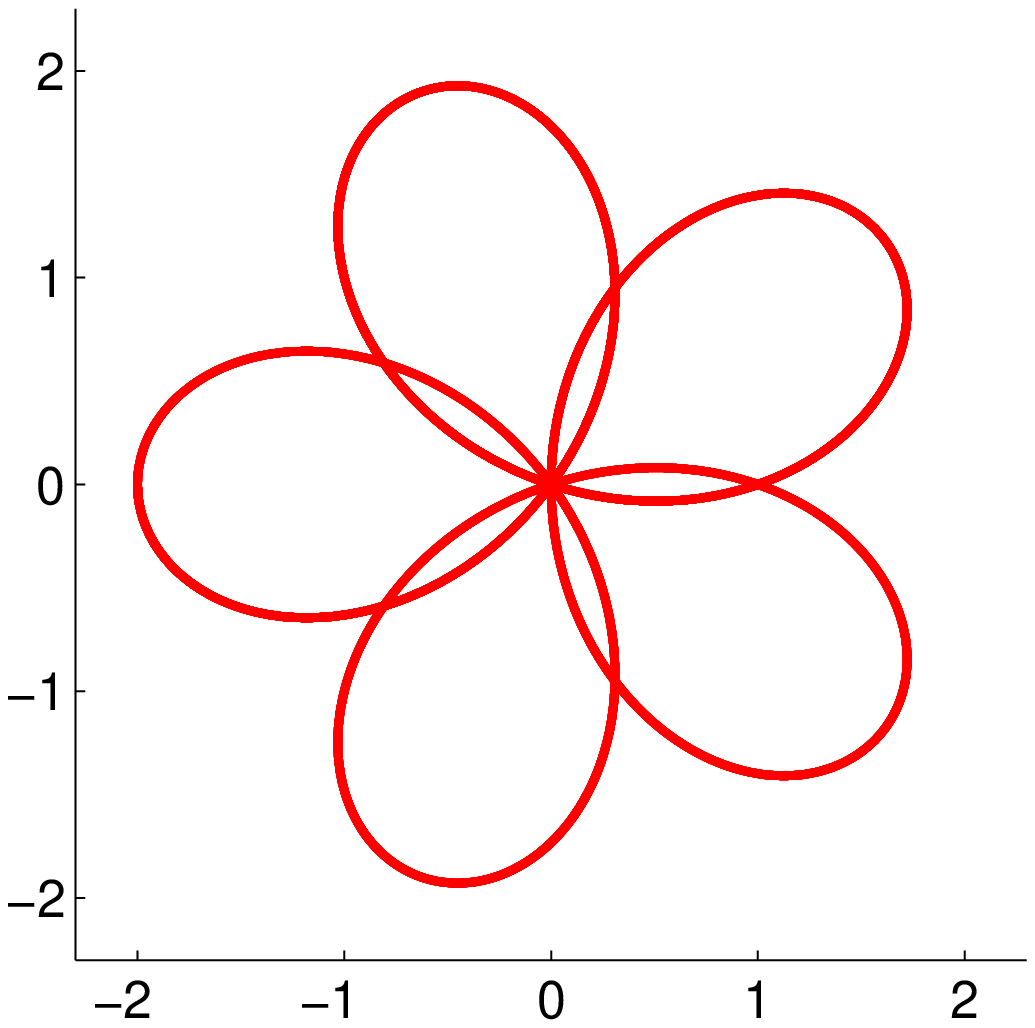}
\fi
\caption{Hypotrochoids for $(R,r,d) = (8/3,5/3,1/2),(7/2,5/2,5/2)$ and $(5,4,1)$. The second curve is a hypocycloid and the third one is a rhodonea curve, which are both special instances of
hypotrochoids.}
\label{Fig:HypotrochoidExamples}
\end{figure}

\begin{figure}[ht]
\ifpictures
\includegraphics[width=0.4\linewidth]{./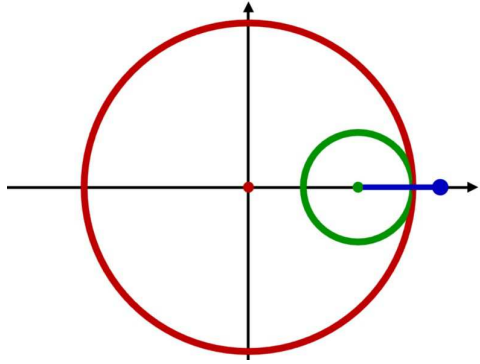}
\fi
\caption{A geometric explanation of a hypotrochoid. The green (small) circle with radius $r$ rolls inside the red (big) circle of radius $R$. The hypotrochoid describes the trajectory of the blue (fat) point with distance $d$ to the center of the green circle. The trajectory has finite length if $R /r \in \Q$.}
\label{Fig:HypotrochoidExplanation}
\end{figure}

We can now give the following answer to Problem~(A) from the Introduction. 

\begin{thm}
Let $f = z^{s+t} + p z^{t} + q$ with $p \in \C$ and $q \in \C^*$ be a trinomial and $v \in \R_{> 0}$. $f$ has a root of norm $v$ if and only if $p$ is located on a hypotrochoid up to a rotation with parameters $R = v^s / t \cdot  (t + s )$, $r = v^s / t \cdot s$ and $d = |q| \cdot v^{-t}$.
\label{Thm:Hypotrochoid}
\end{thm}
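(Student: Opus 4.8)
The plan is to parameterize the locus of admissible $p$ directly and then to recognize it as a hypotrochoid. First I would observe that $f$ has a root of norm $v$ precisely when there is an angle $\phi \in [0,2\pi)$ with $z = v e^{i\phi} \in \cV(f)$; equivalently, in the language of Section~\ref{SubSec:Fibers}, when the fiber function satisfies $\cV(f^{v}) \neq \emptyset$. Writing out $f(v e^{i\phi}) = v^{s+t}e^{i(s+t)\phi} + p\,v^{t}e^{it\phi} + q = 0$ and solving the equation—which is linear in $p$ and legitimate since $v^{t}e^{it\phi}\neq 0$—gives the single formula
\[
  p \;=\; p(\phi) \;=\; -\,v^{s} e^{is\phi} \;-\; q\, v^{-t} e^{-it\phi}.
\]
Hence the set of all $p$ for which $f$ has a root of norm $v$ is exactly the image of the map $\phi \mapsto p(\phi)$, and the whole theorem reduces to identifying this image as a hypotrochoid.

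Second, I would match $p(\phi)$ against the defining form~\eqref{Equ:Hypotrochoid}. The two summands have moduli $v^{s}$ and $|q|\,v^{-t}$ and angular frequencies $s$ and $-t$. Reading off $R-r = v^{s}$, $d = |q|v^{-t}$, and the frequency ratio $\tfrac{r-R}{r} = -\tfrac{t}{s}$, and solving $R-r=v^{s}$ together with $R/r = (s+t)/s$, yields exactly $R = \tfrac{v^{s}}{t}(s+t)$, $r = \tfrac{v^{s}}{t}\,s$ and $d = |q|v^{-t}$, the parameters in the statement. To pass from the frequency-$s$ parameterization of $p$ to the unit-frequency normalization of~\eqref{Equ:Hypotrochoid}, I would substitute $\psi = s\phi$; as $\phi$ runs through $[0,2\pi)$ the new parameter $\psi$ runs through $[0,2\pi s)$, which is a full period of $\gamma$. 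Here the coprimality of $s$ and $t$ is used: it guarantees that $2\pi s$ is the \emph{minimal} period, so that the entire hypotrochoid—not merely an arc of it—is traced out, without retracing.

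Third, I would take care of the phases, which is what the ``up to a rotation'' qualifier encodes. The coefficients $-v^{s}$ and $-q v^{-t}$ of $p$ have arguments $\pi$ and $\pi + \arg(q)$, whereas the normalized curve $\gamma(\psi) = v^{s} e^{i\psi} + d\, e^{-i(t/s)\psi}$ has real positive coefficients. Writing $p(\phi) = e^{i\omega}\gamma(s\phi + c)$ and comparing the two summands leads to the linear system $\omega + c \equiv \pi$ and $\omega -\tfrac{t}{s}c \equiv \pi + \arg(q)$ (mod $2\pi$), whose matrix has nonzero determinant $-(s+t)/s$ and therefore admits a solution, namely $c = -\tfrac{s}{s+t}\arg(q)$ and $\omega = \pi - c$. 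Consequently the image of $p$ is precisely the hypotrochoid $\gamma$ with the claimed parameters, reparameterized by the shift $c$ and rotated through the angle $\omega$; this is the meaning of ``a hypotrochoid up to a rotation''. Conversely every point of this rotated hypotrochoid has the form $p(\phi)$, so the two sets coincide and both directions of the ``if and only if'' follow simultaneously.

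I expect the phase bookkeeping in the third step to be the only genuine obstacle. The modulus and frequency data pin down $R,r,d$ immediately, but one must verify that the mismatch between the arguments $\pi$, $\pi+\arg(q)$ of the coefficients of $p$ and the real positive coefficients of the normalized hypotrochoid is absorbed by a \emph{single} planar rotation together with a parameter shift, rather than requiring an independent rotation of each summand (which would correspond to no rigid motion of the curve at all). The nonvanishing of the determinant $-(s+t)/s$—again a consequence of the setup, since $s+t>0$—is exactly what makes this reconciliation possible, and it is worth recording that the rotation angle $\omega$ genuinely depends on $\arg(q)$.
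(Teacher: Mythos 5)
Your proposal is correct and takes essentially the same route as the paper: both arguments use the fiber function to rewrite ``$f$ has a root of norm $v$'' as $f(ve^{i\phi})=0$, solve this (linearly) for $p$ to get $p(\phi) = -v^{s}e^{is\phi} - q\,v^{-t}e^{-it\phi}$, substitute $\phi' = s\phi$, and match the resulting curve against the normal form~\eqref{Equ:Hypotrochoid} to read off $R = v^{s}(s+t)/t$, $r = v^{s}s/t$, $d = |q|v^{-t}$. The only difference is one of explicitness, in your favor: where the paper simply asserts that the right-hand side ``and thus also its negative'' is a hypotrochoid up to a rotation, you carry out the phase bookkeeping by solving the $2\times 2$ system (with determinant $-(s+t)/s \neq 0$) for the rotation $\omega$ and parameter shift $c$, and you record that $\phi' = s\phi$ ranges over a full period $[0,2\pi s)$ of the curve, so the entire hypotrochoid is traced.
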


\begin{proof}
By~\eqref{Equ:FiberFunction2},
the trinomial $f$ has a root with norm $v \in \R_{> 0}$ if and only if 
the fiber function $f^{v}$ has non-empty zero set, i.e.,
\begin{eqnarray}
	p  + v^s \cdot e^{i \cdot s \cdot \phi} + |q| \cdot v^{-t} \cdot e^{i \cdot \lf(\arg(q) - t \cdot \phi\ri)}	 & =	& 0 \quad \text{ for some }
\phi \in [0,2 \pi).\label{Equ:FiberVarietyTrinomials}
\end{eqnarray}
Using $R - r = (t + s) \cdot v^s /t - s \cdot v^s / t = v^s$ as well as $(r - R) / r = -v^s / (s \cdot v^s / t) = - t/s$, and setting $\phi' = \phi \cdot s$, we obtain
\begin{equation}
\label{eq:hypo1}
 -p  \ \in \ \big\{(R - r) \cdot e^{i \cdot \phi'} + d \cdot e^{i \cdot \lf(\arg(q) + \frac{(r - R)}{r} \cdot \phi' \ri)} : \phi' \in
[0,2\pi)\big\}.
\end{equation}
By \eqref{Equ:Hypotrochoid}, the right hand side, and thus also its negative, is a hypotrochoid up to a rotation.
\end{proof}

\begin{exa}
Let $f = z^8 + p z^{3} + \frac{1}{2}$, $g = z^7 + p z^{2} + \frac{5}{2}$, $h = z^5 + p z + 1$. Then $f,g$ respectively $h$ has a root of norm one if and only if $p \in \C$ is located on the trajectory of the hypotrochoids with parameters $(R,r,d) = (8/3,5/3,1/2),(7/2,5/2,5/2)$ and $(5,4,1)$, 
depicted in Figure \ref{Fig:HypotrochoidExamples}.
\label{Exa:Hypotrochoids1}
\end{exa}

In order to tackle the question on trinomials with multiple roots of the same norm, i.e., Problem (B) from
the Introduction, we start from Bohl's Theorems to show the following initial fact.

\begin{prop}
Let $f = z^{s+t} + p z^{t} + q$ with $p,q \in \C^*$ and $v \in \R_{>0}$. Then 
at most two roots of $f$ have norm $v$.
\label{Prop:Atmost2Roots}
\end{prop}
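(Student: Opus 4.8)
The plan is to count the solutions of the fiber equation directly and bound their number by two. By \eqref{Equ:FiberFunction2} (equivalently \eqref{Equ:FiberVarietyTrinomials}), a root of $f$ of norm $v$ corresponds to an angle $\phi\in[0,2\pi)$ with
\[
  v^{s+t}e^{i(s+t)\phi} + p\,v^{t}e^{it\phi} + q \ = \ 0 ,
\]
and distinct $\phi$ yield distinct roots $z = v e^{i\phi}$ (as $v>0$ is fixed). Hence it suffices to show that at most two such $\phi$ exist.

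First I would rewrite the equation as $A(\phi)+B(\phi) = -q$, where $A(\phi) = v^{s+t}e^{i(s+t)\phi}$ and $B(\phi) = p\,v^{t}e^{it\phi}$ have the constant moduli $|A(\phi)| = v^{s+t}$ and $|B(\phi)| = |p|\,v^{t}$. Any solution therefore forces $A(\phi)$ to lie both on the circle $S_1$ of radius $v^{s+t}$ about the origin and, since $B(\phi) = -q-A(\phi)$ must have modulus $|p|v^{t}$, on the circle $S_2$ of radius $|p|v^{t}$ about $-q$. As $q\neq 0$ the two circles have distinct centres, so $S_1\cap S_2$ consists of at most two points; geometrically these are exactly the two reflected orientations of the triangle with side lengths $v^{s+t}$, $|p|v^{t}$, $|q|$ underlying Bohl's Theorems \ref{Thm:Bohl1}--\ref{Thm:Bohl2}.

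The crux is to show that $\phi\mapsto A(\phi)$ is injective on the solution set, so that the two points of $S_1\cap S_2$ account for at most two admissible $\phi$. If solutions $\phi_1,\phi_2$ satisfy $A(\phi_1) = A(\phi_2)$, then also $B(\phi_1) = -q-A(\phi_1) = B(\phi_2)$, whence both $(s+t)(\phi_1-\phi_2)$ and $t(\phi_1-\phi_2)$ lie in $2\pi\Z$. Since $\gcd(s,t)=1$ gives $\gcd(s+t,t)=1$, this forces $\phi_1-\phi_2\in 2\pi\Z$, i.e.\ $\phi_1=\phi_2$ in $[0,2\pi)$. Combining the two bounds, $f$ has at most two roots of norm $v$.

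I expect the injectivity step to be the only genuine obstacle: it is precisely here that the coprimality hypothesis is used, and without it several angles $\phi$ could realise the same triangle configuration and the bound would fail. As an independent check, the claim also follows directly from Bohl's Theorem \ref{Thm:Bohl2}, since the number of roots of norm exactly $v$ equals the number of the two interval endpoints \eqref{Equ:Bohl1} and \eqref{Equ:Bohl2} that are integers, which is at most two.
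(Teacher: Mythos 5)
Your main argument is correct, and it takes a genuinely different route from the paper. The paper proves the proposition by perturbing $v$ inside Bohl's Theorem~\ref{Thm:Bohl2}: an infinitesimal increase of $v$ moves the two endpoints \eqref{Equ:Bohl1} and \eqref{Equ:Bohl2} continuously, so at most two integers can enter the open interval, followed by a separate case analysis for the three degenerate triangles. Your proof instead works directly with the fiber equation: writing it as $A(\phi)+B(\phi)=-q$, noting that any solution places $A(\phi)$ on the intersection of the circle of radius $v^{s+t}$ about $0$ with the circle of radius $|p|v^{t}$ about $-q$ (distinct centers since $q\neq 0$, hence at most two intersection points), and then showing $\phi\mapsto A(\phi)$ is injective on solutions via $\gcd(s+t,t)=1$. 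The injectivity computation is sound (it uses $p\neq 0$ to pass from $B(\phi_1)=B(\phi_2)$ to $t(\phi_1-\phi_2)\in 2\pi\Z$), and you correctly locate where coprimality enters; your route is more elementary and self-contained, and it absorbs the paper's degenerate cases automatically, since tangent circles simply yield one intersection point.

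There is, however, one genuine gap: your count bounds the number of \emph{distinct} roots of norm $v$, while the paper's argument-principle count bounds the number \emph{with multiplicity} --- and the stronger reading is what the paper later uses, e.g.\ when Theorem~\ref{Thm:LocationRealRoots} and Corollary~\ref{Cor:Discriminant} invoke the proposition to conclude that a root is ``single or a double root''. Your argument does not by itself exclude a double root of norm $v$ coexisting with a further simple root of norm $v$ (three roots with multiplicity, but only two intersection points), nor a root of multiplicity three. Both can be ruled out inside your framework: if $|a|=v$ and $f'(a)=0$, then $(s+t)a^{s}=-tp$, which combined with $f(a)=0$ forces $|p|v^{t}=v^{s+t}+|q|$, i.e.\ the two circles are internally tangent, so there is exactly one intersection point and the multiple root is the only root of norm $v$; and $f''(a)=0$ would additionally force $s\,t\,p=0$, impossible, so the multiplicity is exactly two. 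With these two observations added, your proof yields the full statement.

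Separately, the closing ``independent check'' is false as stated: the number of roots of norm exactly $v$ need not equal the number of integer endpoints among \eqref{Equ:Bohl1} and \eqref{Equ:Bohl2}. Take $f=z^{2}-z-2=(z-2)(z+1)$ and $v=2$: here $v^{2}=|p|v+|q|$, the triangle degenerates with $\alpha=\pi$, $\beta=0$, the interval is $(0,2)$, and both endpoints are integers, yet only the single simple root $z=2$ has norm $2$. This is precisely the degenerate regime that the paper's proof handles by a separate argument; since the remark is only an aside, it does not damage your main proof, but it should be deleted or corrected.
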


\begin{proof}
Let $v \in \R_{> 0}$ such that there exists a root with norm $v$. Then there is a triangle $\Delta$ (possibly degenerated to a line segment) with edges of lengths $v^{s+t}$, $|p| v^{t}$ and $|q|$.
Let $k$ be the cardinality of $|\cV(f)| \cap \{z \in \C^* \, : \, |z| < v\}$.
By Theorem~\ref{Thm:Bohl2}, $k$ equals the number of integers in the open interval $I$ 
bounded by \eqref{Equ:Bohl1} and \eqref{Equ:Bohl2}.

Assume first that $\Delta$ is non-degenerate. Clearly, by Theorem \ref{Thm:Bohl2} an infinitesimal increase of $v$ can only increase the number of integers in the resulting open interval by at most~two. Hence, there can exist at most two roots of norm $v$.

If $\Delta$ degenerates to a line segment then one of the terms $v^{s+t}$, $|p| v^{t}$ and $|q|$
is the sum of the other two. First assume $v^{s+t} = |p| v^t + |q|$. Then the open interval $I$ with endpoints~\eqref{Equ:Bohl1} and \eqref{Equ:Bohl2} has length $s+t$,
and thus $I$ contains $s+t-1$ or $s+t$ integers. Hence, Bohl's Theorem~\ref{Thm:Bohl2}
asserts that there are $s+t-1$ or $s+t$ roots of norm less than $v$. Since $v^{s+t} > |p| v^t$,
infinitesimally increasing $v$ to $v+\eps$
leads to applicability of Bohls' first Theorem~\ref{Thm:Bohl1}, which then states that
there are exactly $s+t$ roots of norm less than $v+\eps$. Hence, in the case $v^{s+t} = |p| v^t + |q|$
there can only exist a single root of norm $v$. 

In the case $|q| = v^{s+t} + |p| v^t$, in Bohl's Theorem 3.2
we have $\alpha = \beta = 0$ and thus the open interval $I$
has length 0. Infinitesimally increasing $v$ to $v + \varepsilon$
leads to a non-degenerate triangle and to at most one integer
in the resulting open interval. There is at most one root with
norm less than $v+\epsilon$, and thus, by our initial assumption
on $v$, exactly one root with norm $v$.
In the case $|p| v^t = v^{s+t} + |q|$ infinitesimally increasing $v$ leads to a non-degenerate
triangle, and we can argue as in case of a non-degenerate triangle $\Delta$.
\end{proof}

For parameters $s,t \in \N^*$ and $q \in \C^*$, given by a trinomial $f = z^{s+t} + p z^{t} + q$, we define a union of rays
\begin{eqnarray}
		F(s,t,q)	& =	& \bigcup_{0 \le k \le 2(s+t)-1} \R_{\ge 0} \cdot e^{i \cdot (s \arg(q) +  k \cdot \pi )/(s+t)}.
\label{Equ:RaysofFan}
\end{eqnarray}

Note that $F(s,t,q) \subseteq \C$, where $\C$ can be regarded as the $\C$-slice $(T_A)_q$ of the augmented parameter space
\begin{eqnarray*}
	\tilde{T}_A & = & T_A \cup \{z^{s+t} + q \, : \, q \in \C^*\} \ \cong \ \C \times \C^* \, .
\end{eqnarray*}
We write $F(s,t,q) = F^{\odd}(s,t,q) \cup F^{\even}(s,t,q)$, where $F^{\odd}(s,t,q)$ and $F^{\even}(s,t,q)$ consists of the rays with odd $k$ (even $k$), see Figure \ref{Fig:HypotrochoidExample2}. If the context is clear, we just write $F^{\odd}$ and $F^{\even}$.

First we show that $F(s,t,q)$ is closely related to the question about multiple roots of the same norm.

\begin{thm}
Let $f = z^{s+t} + p z^{t} + q$ with $p \in \C$ and $q \in \C^*$ such that for two roots $a_1,a_2 \in \cV(f)$ we have $|a_1| = |a_2|$. Then $((s+t)\arg(p) - s\arg(q))/\pi \in \Z$ and thus $p \in F(s,t,q)$. In particular, $(U_j^A)^c \subseteq F(s,t,q)$ for every $1 \leq j \leq s+t-1$.
\label{Thm:ComplementIn1Fan}
\end{thm}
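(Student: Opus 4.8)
The plan is a direct computation: the hypothesis that two roots share a norm turns the root equations into a $2\times 2$ linear system for the coefficients, and solving it will expose $\arg(p)$ and $\arg(q)$ modulo $\pi$.

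First I would write the common norm as $v := |a_1| = |a_2|$ and set $a_1 = v e^{i\phi_1}$, $a_2 = v e^{i\phi_2}$ with $\phi_1 \not\equiv \phi_2 \pmod{2\pi}$. Substituting $a_j$ into $f$ and dividing by $v^t e^{it\phi_j}$ (equivalently, reading off the fiber function as in \eqref{Equ:FiberVarietyTrinomials}) yields
\[
 v^{s+t} e^{i(s+t)\phi_j} + p\, v^t e^{it\phi_j} + q = 0, \qquad j = 1,2,
\]
a linear system in $(p,q)$ with coefficient matrix $\left(\begin{smallmatrix} v^t e^{it\phi_1} & 1 \\ v^t e^{it\phi_2} & 1 \end{smallmatrix}\right)$. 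The one genuine obstacle is to verify that this system is nondegenerate, i.e. $e^{it\phi_1} \ne e^{it\phi_2}$, and this is exactly where coprimality of $s$ and $t$ is used: if $t(\phi_1-\phi_2) \equiv 0 \pmod{2\pi}$, then subtracting the two equations forces $e^{i(s+t)\phi_1} = e^{i(s+t)\phi_2}$, hence also $s(\phi_1-\phi_2)\equiv 0 \pmod{2\pi}$, and writing $1 = as + bt$ gives $\phi_1 \equiv \phi_2 \pmod{2\pi}$, a contradiction.

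Next I would solve by Cramer's rule and simplify every difference of exponentials via $e^{iA} - e^{iB} = 2i\, e^{i(A+B)/2}\sin\!\big(\tfrac{A-B}{2}\big)$. Writing $\mu = (\phi_1+\phi_2)/2$ and $\nu = (\phi_1-\phi_2)/2$, this gives
\[
 p = -\,v^s e^{i s \mu}\,\frac{\sin((s+t)\nu)}{\sin(t\nu)}, \qquad q = v^{s+t} e^{i(s+t)\mu}\,\frac{\sin(s\nu)}{\sin(t\nu)}.
\]
The decisive point is that both quotients of sines are real, so $\arg(p) \equiv s\mu$ and $\arg(q) \equiv (s+t)\mu \pmod{\pi}$, whence
\[
 (s+t)\arg(p) - s\arg(q) \equiv (s+t)s\mu - s(s+t)\mu = 0 \pmod{\pi},
\]
which is the assertion $((s+t)\arg(p) - s\arg(q))/\pi \in \Z$.

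Finally, membership $p \in F(s,t,q)$ follows by writing $(s+t)\arg(p) = s\arg(q) + m\pi$ for an integer $m$ and reducing $m$ modulo $2(s+t)$ to a residue $k \in \{0,\dots,2(s+t)-1\}$; then $\arg(p) \equiv (s\arg(q)+k\pi)/(s+t) \pmod{2\pi}$, which is precisely a ray of \eqref{Equ:RaysofFan}. The degenerate cases are harmless: $\sin((s+t)\nu)=0$ gives $p=0$, lying on every ray of $F(s,t,q)$, whereas $\sin(s\nu)=0$ cannot occur since it would force $q=0$; and a genuine double root $a_1 = a_2 = a$ is handled identically by applying the computation to $f$ and $f'$, where $f'(z) = z^{t-1}((s+t)z^s + pt)$ gives $a^s = -pt/(s+t)$ and $a^{s+t} = tq/s$ and hence the same congruence. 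The concluding "in particular" is then immediate, since every $f \in (U_j^A)^c$ has, by definition, its $j$-th and $(j+1)$-th roots of equal norm, so the hypothesis applies.
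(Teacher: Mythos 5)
Your proposal is correct, and it takes a genuinely different route from the paper's proof. The paper deduces the congruence from Bohl's Theorem \ref{Thm:Bohl2}: if two roots share the norm $v$, then \emph{both} endpoints \eqref{Equ:Bohl1} and \eqref{Equ:Bohl2} of Bohl's interval must be integers, and since these endpoints are symmetric about the midpoint $k = \bigl((s+t)(\pi+\arg(p)-\arg(q)) - t(\pi - \arg(q))\bigr)/(2\pi)$, this forces $2k \in \Z$, which is exactly $((s+t)\arg(p)-s\arg(q))/\pi \in \Z$; the case $a_1 = a_2$ is absorbed into the same counting argument. You bypass Bohl entirely: the two fiber equations become a linear system in $(p,q)$, whose nondegeneracy you correctly reduce, via B\'ezout, to $\gcd(s,t)=1$ — so your argument makes explicit where coprimality enters, whereas in the paper's proof of this theorem it is an invisible standing hypothesis. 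Your Cramer/half-angle computation checks out, including $q = v^{s+t}e^{i(s+t)\mu}\sin(s\nu)/\sin(t\nu)$, and since both sine ratios are real, $(s+t)\arg(p) - s\arg(q) \equiv s(s+t)\mu - s(s+t)\mu \equiv 0 \pmod{\pi}$ as claimed. Your degenerate-case bookkeeping is also sound: $\sin((s+t)\nu)=0$ gives $p=0 \in F(s,t,q)$ (where the assertion about $\arg(p)$ is vacuous), $\sin(s\nu)=0$ is excluded by $q \neq 0$, and the double root is settled by $a^s = -pt/(s+t)$ and $a^{s+t}=tq/s$, which give $\arg(p) \equiv \pi + s\arg(a)$ and $\arg(q) \equiv (s+t)\arg(a) \pmod{2\pi}$ and hence the congruence; since a trinomial with $q \neq 0$ cannot have a root of multiplicity greater than two, this exhausts the case $a_1 = a_2$. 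As for what each approach buys: the paper's proof is two lines given Bohl's classical theorem and fits its programme of reinterpreting classical results, but it rests on the somewhat delicate reading that both interval endpoints are integers precisely when roots lie on the circle $|z| = v$; your proof is elementary and self-contained, and it yields strictly more than the statement, namely the explicit parameterization $\arg(p) \equiv s\mu$, $\arg(q) \equiv (s+t)\mu \pmod{\pi}$ of the equal-norm locus, which is in effect the singular-point description underlying Theorem \ref{Thm:MultipleNormsCorrespondToSingularities} and Sommerville's Corollary \ref{Prop:NodesonFan}.
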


Note that Theorem \ref{Thm:ComplementIn1Fan} also covers the case $a_1 = a_2$.

\begin{proof}
Let $a_1,a_2 \in \cV(f)$ with $|a_1| = |a_2| = v \in \R_{> 0}$. We can apply Bohl's Theorem \ref{Thm:Bohl2} since there are roots with norm $v$ and hence the triangle $\Delta$ is well-defined. Theorem \ref{Thm:Bohl2} yields that for $v$ \textit{both} the numbers \eqref{Equ:Bohl1} and \eqref{Equ:Bohl2} are integers. Since these numbers are symmetric around the number
\begin{eqnarray*}
	k & = & \frac{(s+t)(\pi + \arg(p) - \arg(q)) - t(\pi -\arg(q))}{2\pi},
\end{eqnarray*}
we have $2k \in \Z$ and therefore $((s+t)\arg(p) - s\arg(q))/\pi \in \Z$. Now $p \in F(s,t,q)$ follows from
the Definition \eqref{Equ:RaysofFan} of the union of rays and $(U_j^A)^c \subseteq F(s,t,q)$ follows from the definition of $U_j^A$.
\end{proof}

If a trinomial has two roots which share the same norm $v$, then this fact has a nice interpretation in terms of the hypotrochoid curves given by our first Theorem \ref{Thm:Hypotrochoid}, since it corresponds to their singularities.

\begin{thm}
Let $f = z^{s+t} + p z^{t} + q$ with $p \in \C$ and $q \in \C^*$. There exist $ a_j,a_{j+1} \in \cV(f)$ with $|a_j| = |a_{j+1}| = v \in \R_{> 0}$ if and only if $p$ is a singular point of the hypotrochoid $f^{v} - p$ determined in~\eqref{eq:hypo1}.
In detail
\begin{enumerate}
 \item $f$ has two distinct roots with identical norm $v$ if and only if $p$ is located on a real double point of the hypotrochoid,
 \item $f$ has a root of multiplicity two with norm $v$ if and only if the corresponding hypotrochoid is a hypocycloid and $p$ is a cusp of it, and
 \item $f$ has more than two roots with norm $v$ if and only if $p = 0$ if and only if the 
  hypotrochoid is a rhodonea curve with a point of multiplicity $s+t$ in the origin.
\end{enumerate}
\label{Thm:MultipleNormsCorrespondToSingularities}
\end{thm}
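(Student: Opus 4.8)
The plan is to read the singularities of the hypotrochoid directly off the fiber function. Writing $z_0 = v e^{i\phi}$, equation~\eqref{Equ:FiberVarietyTrinomials} says that $v e^{i\phi}$ is a root of $f$ of norm $v$ if and only if $-p = \gamma(\phi)$, where
\[
  \gamma(\phi) \ = \ v^s e^{i s \phi} + q\, v^{-t} e^{-i t \phi}, \qquad \phi \in [0,2\pi),
\]
which, after the substitution $\phi' = s\phi$ carried out in the proof of Theorem~\ref{Thm:Hypotrochoid}, is precisely the hypotrochoid of~\eqref{eq:hypo1}. Thus the roots of norm $v$ are in bijection with the parameters $\phi$ in the fibre $\gamma^{-1}(-p)$, and $p$ lies on the hypotrochoid exactly when this fibre is non-empty. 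The key observation is that a singular point of a regularly parametrized plane curve arises in one of two ways: either several parameters share the same image (a self-intersection), or the velocity $\gamma'$ vanishes (a cusp). The three cases of the statement will be matched to these phenomena.

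For part~(1), two distinct roots $a_j \neq a_{j+1}$ of norm $v$ correspond to two distinct parameters $\phi_1 \neq \phi_2$ with $\gamma(\phi_1) = \gamma(\phi_2) = -p$, i.e.\ to a self-intersection of the hypotrochoid at $-p$. By Proposition~\ref{Prop:Atmost2Roots} at most two roots share the norm $v$, so at most two branches pass through $-p$, whence it is an ordinary double point. To certify that it is a genuine real node rather than a cusp I will check that $\gamma'(\phi_1), \gamma'(\phi_2) \neq 0$ — the vanishing of $\gamma'$ being exactly the double-root situation of part~(2) — and that the two tangent directions are distinct, the tangential degeneration coinciding with the hypocycloid/rhodonea cases treated below.

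For part~(2), I compute
\[
  \gamma'(\phi) \ = \ i\big(s\, v^s e^{i s \phi} - t\, q\, v^{-t} e^{-i t\phi}\big),
\]
so that $\gamma'(\phi_0) = 0$ is equivalent, after multiplying by $v^t e^{it\phi_0}$, to $s\, z_0^{s+t} = t\, q$ with $z_0 = v e^{i\phi_0}$. Using $f'(z) = z^{t-1}\big((s+t)z^s + p t\big)$, a short computation shows that for a root $z_0$ of $f$ the relation $s\, z_0^{s+t} = t\, q$ is equivalent to $f'(z_0) = 0$, i.e.\ to $z_0$ being a root of multiplicity two; and a zero of the velocity of the parametrization is a cusp. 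It remains to identify the curve as a hypocycloid: the double-root conditions force $v^{s+t} = (t/s)\,|q|$, whence $d = |q|\, v^{-t} = s\, v^s / t = r$, which by~\eqref{Equ:Hypotrochoid} is exactly the hypocycloid case.

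For part~(3), Proposition~\ref{Prop:Atmost2Roots} forbids more than two roots of norm $v$ as long as $p, q \in \C^*$, so more than two forces $p = 0$; then $f = z^{s+t} + q$ has all $s+t$ roots of the common norm $v = |q|^{1/(s+t)}$. For this $v$ one has $R - r = v^s = |q|\, v^{-t} = d$, the rhodonea case, and the equation $\gamma(\phi) = 0$ reduces to $e^{i(s+t)\phi} = -q/|q|$, which has exactly $s+t$ solutions in $[0,2\pi)$; hence the origin $-p = 0$ is a point of multiplicity $s+t$ on the rhodonea curve. I expect the main obstacle to be the precise identification of the type of singularity in each regime: translating the algebraic double-root condition $f(z_0) = f'(z_0) = 0$ into the vanishing of the velocity $\gamma'$, and cleanly separating an ordinary node (part~1) from a cusp (part~2) and from the highly singular rhodonea centre (part~3). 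Once this dictionary between $\gamma^{-1}(-p)$, $\gamma'$ and the derivatives of $f$ is in place, each equivalence follows from the elementary computations indicated above.
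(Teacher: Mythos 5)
Your proposal is correct and follows essentially the same route as the paper: the identification of roots of norm $v$ with fibre parameters $\phi$ of the hypotrochoid, Proposition~\ref{Prop:Atmost2Roots} to cap the number of equal-norm roots, the computation $\gamma'(\phi_0)=0 \Leftrightarrow s\,z_0^{s+t}=t\,q \Leftrightarrow f'(z_0)=0$ for a root $z_0 = v e^{i\phi_0}$ together with $d=r$ for the cusp case, and the rhodonea analysis at $p=0$ all mirror the paper's argument. The one divergence is that you run the derivative computation in both directions of part~(2), whereas the paper handles the direction ``double root $\Rightarrow$ cusp'' by a somewhat informal limit argument ($\phi \to \psi$ degenerating a node to a cusp) — your version is if anything cleaner, provided you invoke the classical fact (as the paper does via \cite{Brieskorn:Knoerrer}) that the vanishing-velocity points of a hypocycloid are genuine cusps, since for a general parametrized curve a zero of the velocity need not be a geometric singularity.
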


\begin{proof}
There exist two roots $a_j,a_{j+1} \in \cV(f)$ with $|a_j| = |a_{j+1}| = v \in \R$ and $a_j \neq a_{j+1}$ if and only if there exist $\phi,\psi \in [0,2\pi)$ with $\phi \neq \psi$ and $f^{v}(\phi) = f^{v}(\psi) = 0$ i.e., equivalently, $f^{v}(\phi) - p = f^{v}(\psi) - p = -p$. This is the case if and only if the hypotrochoid $f^{v} - p$ attains the value $-p$ twice, i.e., it has a real double point at $-p \in \C$.

If $a_j = a_{j+1}$, i.e., $f$ has a double root, then we can consider this case as the limit of a family of trinomials given by the limit of $\phi \to \psi$ and therefore $a_j \to a_{j+1}$ in the upper case. That is,
the node at $f^{v}(\phi) - p$ degenerates to a cusp. 
Conversely, if the hypotrochoid $f^{v}(\phi) - p$ has a cusp, then it is a hypocycloid, i.e., $d = r$ in \eqref{Equ:Hypotrochoid} (see \cite{Brieskorn:Knoerrer}). Moreover, the cusp properties
$f^{v}(\phi) = 0$ and $\frac{\partial}{\partial \phi} f^v(\phi) = 0$ imply for the parameter values
$\phi^*$ of the cusps: $p = - \frac{s+t}{t} v^s e^{is\phi^{*}}$. Hence, 
the derivative $\frac{\partial f}{\partial z} = (s+t) z^{s+t-1} + tp z^{t-1}$ vanishes at 
$v \cdot e^{i \phi^{*}}$, and thus $f$ has a double root with norm $v$.

Assume finally $f$ has more than two roots with norm $v$. By Theorem \ref{Prop:Atmost2Roots} this is equivalent to $p = 0$ and hence $v = \sqrt[s+t]{|q|}$. Theorem \ref{Thm:Hypotrochoid} then implies $R - r = d$ for the corresponding hypotrochoid $f^{v} - p = f^{v}$. But $R - r = d$ means that the hypotrochoid is a rhodonea curve with a point of multiplicity $s+t$ in the origin. On the other hand, if a hypotrochoid has a point of multiplicity greater than two, then it is always a rhodonea curve and the singularity is located in the origin. Thus, $p = 0$ and $R - r = d$, and further, by Theorem \ref{Thm:Hypotrochoid}, $v^{s+t} = |q|$.
\end{proof}

As an immediate corollary of Theorems~\ref{Thm:ComplementIn1Fan} and \ref{Thm:MultipleNormsCorrespondToSingularities}, we regain a statement about hypotrochoids by Sommerville from 1920 \cite{Sommerville}.

\begin{cor}(Sommerville)
Let $\gamma: [0,2\pi) \ra \C$, $\phi \mapsto v^s e^{i \cdot s \phi} + |q| v^{-t} e^{i \cdot (\arg(q) - t \phi)}$ be a hypotrochoid. Then all singularities of $\gamma$ are located on $F(s,t,q)$.
\label{Prop:NodesonFan}
\end{cor}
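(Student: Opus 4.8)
The plan is to read the corollary off directly from Theorems~\ref{Thm:MultipleNormsCorrespondToSingularities} and~\ref{Thm:ComplementIn1Fan}, once the curve $\gamma$ is identified with the hypotrochoid already under study. First I would observe that the curve in the statement is precisely the hypotrochoid $f^{v}-p$ appearing in~\eqref{eq:hypo1}: with $R = v^s(s+t)/t$, $r = v^s s/t$ and $d = |q| v^{-t}$ one has $R-r = v^s$ and $(r-R)/r = -t/s$, so after the reparametrization $\phi' = s\phi$ the right-hand side of~\eqref{eq:hypo1} becomes exactly $v^s e^{i s \phi} + |q| v^{-t} e^{i(\arg(q) - t\phi)} = \gamma(\phi)$. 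In particular $\gamma$ does not depend on $p$, and the defining relation~\eqref{eq:hypo1} expresses that, for the trinomial $f = z^{s+t} + p z^t + q$, the value $-p$ lies on $\gamma$ exactly when $f$ has a root of norm $v$.

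Next I would take an arbitrary singular point $P$ of $\gamma$ and set $p := -P$, so that $\gamma$ attains the value $-p = P$ as a singular value. By Theorem~\ref{Thm:MultipleNormsCorrespondToSingularities}---which treats nodes, cusps and the rhodonea point of multiplicity $s+t$ at the origin on equal footing---this is equivalent to the trinomial $z^{s+t} + p z^t + q$ possessing two roots $a_j, a_{j+1}$ of identical norm $v$. Theorem~\ref{Thm:ComplementIn1Fan} then forces $p \in F(s,t,q)$. Since the equivalence in Theorem~\ref{Thm:MultipleNormsCorrespondToSingularities} is an \emph{iff}, every singular point of $\gamma$ is captured this way, so no type of singularity is missed.

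It remains to pass from $p$ to the singular point $P = -p$ itself, and this is the only step requiring care. Here I would use that $F(s,t,q)$ is centrally symmetric: negating the ray of direction $(s\arg(q) + k\pi)/(s+t)$ adds $\pi$ to its angle, which shifts the index $k$ by $s+t$, and since $k \mapsto k + (s+t) \bmod 2(s+t)$ permutes $\{0,\ldots,2(s+t)-1\}$ (directions being read modulo $2\pi$), one obtains $-F(s,t,q) = F(s,t,q)$. Hence $P = -p \in F(s,t,q)$, as claimed; the rhodonea case $p = 0$ needs no separate treatment because the origin lies on every ray. The main---and essentially only---obstacle is thus the bookkeeping of the sign convention relating the singular point $P$ to the coefficient $p$, since the geometric content is carried entirely by the two cited theorems.
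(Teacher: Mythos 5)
Your proof is correct and takes essentially the same route as the paper, which derives the corollary immediately from Theorems~\ref{Thm:ComplementIn1Fan} and~\ref{Thm:MultipleNormsCorrespondToSingularities} exactly as you do. Your extra bookkeeping of the sign $P=-p$ via the central symmetry $-F(s,t,q)=F(s,t,q)$ is a legitimate (and careful) touch: the paper glosses over this point, as its Theorem~\ref{Thm:MultipleNormsCorrespondToSingularities} states that $p$ is the singular point while its proof places the double point at $-p$, and your symmetry argument shows the conclusion is unaffected either way.
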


\begin{rem}In order to see that Sommerville's result (stated in different notation) indeed matches
with the preceding corollary, express his variables $p$ and $q$ by $R$ and $r$ in his equation for $\theta$ in \cite[\S{10}, p.~390]{Sommerville} to obtain 
$\theta = \frac{k \pi r}{R} \ = \ \frac{k \pi s}{s+t}$ with $k \in \N$.
\end{rem}

To describe which subsets of $F(s,t,q)$ belong to the complement of a set $U_j^A$, we make use of the following observation about real trinomials.

%\timo{Folgendes Theorem inkl. Kommentar und Beweis um Bezug auf Resultat von Bertrand, Bihan und %Sottile erweitert. Bitte schau auch hier nochmal genau dr\"uber.}

\begin{thm}
Let $f = z^{s+t} + p z^{t} + q$ with $p,q \in \R^*$ and $\cV(f) = \{a_1\ldots,a_{s+t}\}$, such that $|a_1| \leq \cdots \leq |a_{s+t}|$. Assume $a_j$ is real. Then $j \in \{1,t,t+1,s+t\}$. Furthermore, if $a_t$ or $a_{t+1}$ is real, then $f$ is lopsided at every point in the interval $E_t(f) = \{w \in \R \ : \ \log|a_t| < w < \log|a_{t+1}|\}$. 
%Moreover, $f$ has at most three real roots, and for certain choices of $s,t,p$ and $q$ the trinomial $f$ %has exactly three real roots.
\label{Thm:LocationRealRoots}
\end{thm}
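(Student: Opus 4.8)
The plan is to build everything on one decisive observation: \emph{a real root forces a degenerate Bohl triangle.} If $a_j = \pm v$ is real with $v = |a_j| \in \R_{>0}$, then $a_j^{s+t}$, $p a_j^{t}$ and $q$ are three nonzero real numbers summing to $f(a_j) = 0$. Three nonzero reals with zero sum cannot all have the same sign, so two share one sign and the third the opposite; hence the one of largest absolute value equals the sum of the other two. Passing to absolute values, exactly one of $v^{s+t} = |p| v^{t} + |q|$ (case (i)), $|p| v^{t} = v^{s+t} + |q|$ (case (ii)), or $|q| = v^{s+t} + |p| v^{t}$ (case (iii)) holds. These are precisely the boundary equations separating the three lopsided regimes of Theorem~\ref{Thm:Bohl1}, so the norm of any real root lies on such a boundary.

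I would then read off the rank of $a_j$ from the regime it borders, using Theorem~\ref{Thm:Bohl1} and the monotonicity of $v \mapsto v^{s+t} + |p| v^{t}$. In case (iii) this increasing function gives $|q| > v'^{s+t} + |p| v'^{t}$ for every $v' < v$, so no root has norm below $v$ and $j = 1$. In case (i), where $v^{s} > |p|$, the function $v' \mapsto v'^{s+t} - |p| v'^{t} - |q|$ is increasing for $v' \ge v$, so $v'^{s+t} > |q| + |p| v'^{t}$ for $v' > v$, all roots have norm at most $v$, and $j = s+t$. Case (ii) is the boundary of the middle regime $M = \{v' \in \R_{>0} : |p| v'^{t} > v'^{s+t} + |q|\}$, on which Theorem~\ref{Thm:Bohl1} gives order $t$; writing $M = (v_-, v_+)$ when nonempty, the jump of the order across an endpoint, combined with the bound of at most two roots per norm from Proposition~\ref{Prop:Atmost2Roots}, places $a_j$ at rank $t$ when $v = v_-$ and at rank $t+1$ when $v = v_+$.

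The main obstacle is the fine analysis of case (ii). First I must exclude the spurious rank $t-1$: two roots can share a norm only through a double root, since a conjugate pair together with a real root of the same norm would violate Proposition~\ref{Prop:Atmost2Roots}, and $v, -v$ cannot both be roots as $p,q \neq 0$ and $\gcd(s,t)=1$. A double root forces $v_- = v_+$ (the regime $M$ collapses to a point $v^{*}$), so the endpoints of a genuine interval $M$ carry simple roots and the ranks $t, t+1$ are clean; in particular the roots in cases (i) and (iii) are simple, confirming the exact ranks $s+t$ and $1$ there. The confluent case is a real double root, which I would place at ranks $t, t+1$ by exhibiting it as the limit of the two simple boundary roots as $M$ shrinks. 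That this double root belongs to case (ii) follows from $f(a) = f'(a) = 0$, which yields $|q| = \tfrac{s}{s+t}|p|\,v^{t}$, exactly $\psi(v^{*}) = 0$ for $\psi(v) = |p| v^{t} - v^{s+t} - |q|$, consistent with Theorem~\ref{Thm:MultipleNormsCorrespondToSingularities}(2).

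For the lopsidedness statement, suppose $a_t$ or $a_{t+1}$ is real; by the above it lies in case (ii), so its norm is $v_-$ or $v_+$. The crucial—and second delicate—point is that case-(ii) real roots always occur as the \emph{pair} $v_-, v_+$: they are the two zero-crossings of a local extremum of $x \mapsto f(x)$ on $\R_{>0}$ or $\R_{<0}$ (which of the two, and whether they exist, is dictated by the signs of $p, q$ and the parities of $s, t$), so one boundary value is a root norm iff the other is. Since $M$ contains no root norms, this forces $|a_t| = v_-$ and $|a_{t+1}| = v_+$, whence $E_t(f) = (\log v_-, \log v_+) = M$, on which the middle term dominates and $f$ is lopsided throughout (in the confluent case $E_t(f) = \emptyset$ and the claim is vacuous). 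Verifying this pairing across all sign and parity configurations is the bookkeeping I expect to be the most laborious part of the argument.
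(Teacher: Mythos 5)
Your proposal is correct in substance and reaches the conclusion by a genuinely different mechanism than the paper. Both proofs start from the same decisive observation (a real root makes the three real monomials sum to zero, so the largest modulus equals the sum of the other two, i.e., the Bohl triangle degenerates), and both use Proposition~\ref{Prop:Atmost2Roots} and the identity $|p\,a_j^t| = |q| + |a_j|^{s+t}$ from $f'(a_j)=0$ in the double-root case. But where you diverge: the paper keeps the norm $v=|a_j|$ fixed and perturbs the \emph{coefficient} of the dominating monomial by $\eps>0$, obtaining a lopsided polynomial $g$ whose complement component $E_j(g)$ has order $0$, $t$ or $s+t$ by Theorem~\ref{Thm:Bohl1}, and then transfers this back to $f$ by a root-tracking argument ($\log|a_j|$ lies on the boundary of $\overline{E_0(f)}$, $\overline{E_t(f)}$ or $\overline{E_{s+t}(f)}$); you instead keep $f$ fixed and move the \emph{norm} $v'$, using monotonicity of $v'\mapsto v'^{s+t}+|p|v'^{t}$ and of $v'\mapsto v'^{s+t}-|p|v'^{t}-|q|$, and the interval structure of $M=\{v': |p|v'^t > v'^{s+t}+|q|\}$, to read off the exact ranks $1$, $s+t$, $t$, $t{+}1$ directly from Theorem~\ref{Thm:Bohl1} applied to $f$ itself. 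This is more elementary (no ``ordering of zeros is preserved'' step) and, notably, more complete on the second clause: the paper's written proof never explicitly establishes the lopsidedness statement on $E_t(f)$, whereas you correctly identify that it amounts to $E_t(f) = \Log M$, i.e., that case-(ii) real roots occur as the \emph{pair} $\{v_-,v_+\}$. Your deferred ``bookkeeping'' does close quickly: the sign pattern forcing case (ii) depends only on $\sign(p)$, $\sign(q)$ and the parities of $t$, $s+t$, and under that pattern $f(\sigma x)$ coincides up to a global sign with $x^{s+t}-|p|x^t+|q|$ on $x>0$, whose positive zeros are exactly $v_-$ and $v_+$ (or the tangency point $v^*$); the same parity check shows positive and negative case-(ii) roots cannot coexist.

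Two caveats. First, your step ``by the above it lies in case (ii)'' is a non sequitur when $t=1$ (a case-(iii) root is then $a_1=a_t$) or $s=1$ (a case-(i) root is then $a_{s+t}=a_{t+1}$); indeed for $f=z^3+z-2$ (so $s=2$, $t=1$) the root $a_1=a_t=1$ is real yet $f$ is lopsided nowhere on $E_1(f)=(0,\log\sqrt{2})$ — so the ``furthermore'' clause as literally stated fails there, and your argument is valid exactly on the theorem's intended scope $s,t\ge 2$; you should state this restriction rather than let it pass silently. Second, placing the double root at ranks $t,t{+}1$ ``as the limit of the two simple boundary roots'' needs a concrete deformation — e.g., increase $|p|$ by $\eps$, which reopens $M$ and splits the double root; this is precisely the paper's $\eps$-perturbation, so at this one point the two proofs coincide and your sketch should be firmed up accordingly.
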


Consistent with D\'escartes' Rule of Signs,
Theorem \ref{Thm:LocationRealRoots} in particular implies that a real trinomial $f$ always has exactly one or three (respectively zero, two or four) real roots when $s+t$ is odd (respectively even). Indeed, since $s$ or $t$ is odd, a simultaneous application of D\'{e}scartes Rule on $f(z)$ 
and $f(-z)$ straightforwardly reveals that the case of four real roots (i.e., two positive ones and 
two negative ones) cannot occur.

%Furthermore, Theorem \ref{Thm:LocationRealRoots} refines the univariate case of a result by Bertrand, %Bihan and Sottile \cite{Bertrand:Bihan:Sottile} stating that a polynomial system with support a primitive %circuit has at most $2n + 1$ real solutions and that this bound is sharp. Recall in this context that a %finite subset $A \subset \Z^n$ is a \textit{circuit} if $A$ is a minimal affine dependent and that $A$ is %\textit{primitive} if it spans $\Z^n$; see \cite{Gelfand:Kapranov:Zelevinsky}.

\begin{proof}
Let $a_j$ be a real root of $f$. By Proposition \ref{Prop:Atmost2Roots} $a_j$ is a single or a double root. Furthermore, all the three terms $a_j^{s+t},p a_j^t$ and $q$ are real, and one of the monomials equals the sum of the two others. Hence, if we continuously increase the norm of the dominating monomial by $\eps > 0$, then the resulting polynomial $g$ is lopsided at $\log|a_j|$ (see Section \ref{SubSec:Amoebas}) and the ordering of the zeros is preserved (under the right labeling for the case that $f$ has a multiple real root). Hence, we can apply Bohl's Theorem \ref{Thm:Bohl1} (the analog for lopsidedness for univariate trinomials; see Section \ref{Sec:TrinomialsClassical}), which implies that $g$ contains $0,t$ or $s+t$ roots in the interior of the circle with radius $|a_j|$. Hence, the component $E_{j}(g)$ of the complement of the amoeba $\cA(g)$, which contains $\log|a_j|$, has order $0,t$ or $s+t$ (see Section \ref{SubSec:Amoebas}). 
Since $a_j$ is a root of $f$ it follows that $\log|a_j|$ is contained in the boundary of the closure of the components $E_{0}(f),E_{t}(f)$ or $E_{s+t}(f)$ of the complement (where $E_{t}(f)$ degenerates to the empty set in the case that $a_j$ is a multiple real root).

If $a_j$ is a single root then the definition of $E_t(f)$ directly implies
$j \in \{1,t,t+1,s+t\}$.

In case of a double root $a_j = a_{j+1} \neq 0$, the condition $f'(a_j) = 0$ 
implies $|a_j|^s = |p| \cdot t/(s+t)$ and 
(by considering the derivative of $z^s + p + q z^{-t}$)
$|a_j|^{s+t} = |q| \cdot t / s$. Division yields $|a|^{t} = |q| / |p|  \cdot (s+t) / s$,
and we conclude
\begin{equation}
  \label{eq:pajt}
  |p a_j^t| \ = \ |q| \frac{s+t}{s} \ = \ |q| + |a_j|^{s+t} \, .
\end{equation}
Hence, the complement component $E_j(g)$ of $\cA(g)$, which
contains $\log |a_j|$, has order $t$ with Theorem \ref{Thm:Bohl1}. With regard to the trinomial $f$, this shows $j=t$.
\end{proof}

With Theorem~\ref{Thm:LocationRealRoots} at hand, we now have all the tools to distinguish which subsets of $F(s,t,q)$ are part of the
complement of which $U_j^A$. Thus, together with Theorem \ref{Thm:ComplementIn1Fan}, the following theorem solves Problem (B) from the Introduction.

\begin{thm}
For fixed $q \in \C^*$, let 
$f_p = z^{s+t} + p z^t + q$ be a parametric family of trinomials with parameter $p \in \C$. 
For $j \in \{1,\ldots,s+t-1\} \setminus \{t\}$ the following holds.
\begin{equation}
\begin{array}{lrcl}
 \label{eq:parametric}
 \text{\emph{For} } s+j \text{\emph{ even we have:}} & f \in U_j^A & \text{if and only if} & p \notin F^{\even}. \\
 \text{\emph{For} } s+j \text{\emph{ odd we have:}} & f \in U_j^A & \text{if and only if} & p \notin F^{\odd}.
\end{array}
\end{equation}
In particular, the set $\{p \in \C^* \, : \, f_p \in U_{j}^A\}$ is not connected, and this
remains true for the set $\{p \in \C \, : \, f_p \in U_{j}^A\}$ when the sets $U_j^A$ are 
considered in $\C \times \C^*$.
For $U_t^A$, the conditions~\eqref{eq:parametric} hold as well, with the modification that 
we have additionally $f_p \in U_t^A$ if there exists a $v \in \R_{> 0}$ such that $f_p$ is lopsided with dominating term $p v^t$.
\label{Thm:LocalStructureUalpha}
\end{thm}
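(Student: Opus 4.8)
The plan is to reduce everything to \emph{real} trinomials, where Theorem~\ref{Thm:LocationRealRoots} pins down the admissible positions of real roots, and then read off the tie-positions and their parity. By Theorem~\ref{Thm:ComplementIn1Fan} we have $(U_j^A)^c \subseteq F(s,t,q)$, so it suffices to analyze a $p$ lying on one of the $2(s+t)$ rays. Writing $z = we^{i\theta}$ gives $f_p(we^{i\theta}) = e^{i(s+t)\theta}\bigl(w^{s+t} + pe^{-is\theta}w^t + qe^{-i(s+t)\theta}\bigr)$, so the roots of $f_p$ are those of $g_\theta(w) = w^{s+t} + (pe^{-is\theta})w^t + (qe^{-i(s+t)\theta})$ rotated by $e^{-i\theta}$; in particular $f_p$ and $g_\theta$ have the same multiset of root-norms and hence the same index $j$ with $|a_j| = |a_{j+1}|$. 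First I would check that $p \in F(s,t,q)$ is exactly the condition making both nontrivial coefficients of $g_\theta$ real for a suitable $\theta$: the requirements $(s+t)\theta \equiv \arg(q)$ and $s\theta \equiv \arg(p) \pmod{\pi}$ are simultaneously solvable precisely when $(s+t)\arg(p) - s\arg(q) \in \pi\Z$, using $\gcd(s,t) = 1$ to invert $s$ modulo $s+t$. The two residues of the ray index $k$ modulo $2$ then correspond to the two sign classes of the real pair $(p',q') = (pe^{-is\theta}, qe^{-i(s+t)\theta})$, and this is the dictionary between $F^{\even}/F^{\odd}$ and the real case.

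Next I would describe the tie-pattern for a real trinomial $g = w^{s+t} + p'w^t + q'$ with $p',q' \in \R^*$. Non-real roots occur in complex conjugate pairs of equal norm, so every tie $|a_j| = |a_{j+1}|$ with $a_j \neq a_{j+1}$ is such a pair, while by Theorem~\ref{Thm:LocationRealRoots} the \emph{real} roots can sit only at positions in $\{1,t,t+1,s+t\}$. Invoking Bohl's plateaus (Theorem~\ref{Thm:Bohl1}), the number of roots of norm $<v$ is forced to equal $0$, $t$ and $s+t$ on the three dominating regimes, so the positions split into a first batch $1,\dots,t$ and a second batch $t+1,\dots,s+t$, and within each batch the only admissible real positions are its two endpoints. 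Consequently the conjugate pairs tile the interior of each batch in consecutive blocks of length two, and the resulting tie-positions form an arithmetic progression of step $2$ whose parity is fixed once we know whether each batch endpoint carries a real root.

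The technical heart, which I expect to be the main obstacle, is to show that for $j \neq t$ the tie-pattern is constant over an entire sign class (independently of the magnitudes $|p'|,|q'|$) and to match its parity to $s+j$. The constancy follows because, as $|p'|,|q'|$ vary, a conjugate pair can merge into a real double root (or split off) only on the discriminant locus, and the proof of Theorem~\ref{Thm:LocationRealRoots} shows such a double root forces index $j=t$; hence for $j \neq t$ no real root can be created or destroyed within a sign class, so the real positions---and with them the tie-parities in both batches---are determined by the sign class alone. A Descartes'-rule count, as in the remark after Theorem~\ref{Thm:LocationRealRoots}, then fixes exactly which of the endpoints $\{1,t,t+1,s+t\}$ are real for each sign class, and tracking the resulting progression of tie-positions gives $|a_j| = |a_{j+1}|$ precisely for the $j$ with $s+j$ of the prescribed parity; translating back through the dictionary of the first paragraph yields $f_p \in U_j^A \Leftrightarrow p \notin F^{\even}$ (resp.\ $p \notin F^{\odd}$).

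Finally I would treat $j=t$ and the connectivity claim. At the batch boundary the pair at positions $t,t+1$ is the one that can be ``opened'': if there exists $v$ with $f_p$ lopsided and dominating term $pv^t$, then Theorem~\ref{Thm:Bohl1} yields exactly $t$ roots of norm $<v$, whence $\log|a_t| < \log v < \log|a_{t+1}|$ and $f_p \in U_t^A$; conversely, absent such a $v$ the two middle roots share a norm and $f_p \in (U_t^A)^c$. This is exactly the stated modification for $U_t^A$, since a tie at position $t$ corresponds to a conjugate pair straddling the boundary, i.e.\ to the plateau $E_t(g)$ being empty. For the non-connectedness, the first three paragraphs identify $\{p : f_p \in U_j^A\}$ with $\C \setminus F^{\even}$ (resp.\ $\C \setminus F^{\odd}$), the complement of $s+t$ equally spaced rays emanating from the origin, which is a disjoint union of $s+t$ open sectors and hence disconnected; deleting $p=0$ or enlarging the ambient space to $\C \times \C^*$ leaves this conclusion intact.
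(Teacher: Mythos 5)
Your reduction to real trinomials via the rotation $z = we^{i\theta}$ is sound, and it is a genuinely different route from the paper, which never leaves the complex setting: the paper simply observes that for $p \in F(s,t,q)$ the midpoint $k$ of the symmetric interval in Bohl's Theorem~\ref{Thm:Bohl2} lies in $\frac{1}{2}\Z$, that the parity of the attained root-counts (hence of the tie-positions $j$) is governed by whether $k \in \Z$, and then computes $k = (s+l)/2$ on the $l$-th ray --- three lines where you build a whole combinatorial theory of real root configurations. The problem is that your version has a genuine gap exactly at what you call the technical heart. Descartes' rule of signs counts positive and negative real roots of $g = w^{s+t} + p'w^t + q'$, but it carries \emph{no information about where those real roots sit in the norm ordering}: it cannot distinguish a unique real root at position $1$ from one at position $t$, $t+1$, or $s+t$, and the parity of the tie-progression in each batch depends precisely on this rank. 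Concretely, for $s+t$ odd and $t$ even, a unique real root at position $t+1$ is perfectly consistent with Theorem~\ref{Thm:LocationRealRoots}, Proposition~\ref{Prop:Atmost2Roots}, and your tiling combinatorics, yet it would produce ties of \emph{mixed} parity across the two batches (odd $j$ below $t$, even $j$ above), contradicting the claimed single-parity conclusion; ruling it out requires analytic input your proposal does not supply --- e.g.\ degenerating within the sign class to a lopsided representative $|p'| \gg 0$ and doing a first-order perturbation of the root norms (one finds, say, that for $p' > 0$ the real small root has strictly smallest norm), or else invoking Bohl's Theorem~\ref{Thm:Bohl2} directly, which is the paper's shortcut.

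Two secondary soft spots. First, your assertion that every tie is a conjugate pair, and your constancy argument within a sign class, both silently need Proposition~\ref{Prop:Atmost2Roots}: without ``at most two roots per norm'' you cannot exclude a real root or a second pair crossing an existing pair in norm (which would relocate ties without any discriminant crossing), nor ties between two real roots $\pm a$ (these must be excluded by a separate computation using $q' \neq 0$ and $\gcd(s,t)=1$). Your stated mechanism --- merges only on the discriminant locus, forced to $j=t$ --- covers real/non-real transitions but not order exchanges. Second, your converse for $j=t$ (``absent such a $v$ the two middle roots share a norm'') is asserted, not proved: Theorem~\ref{Thm:LocationRealRoots} yields lopsidedness on $E_t(f)$ only when $a_t$ or $a_{t+1}$ is real, and configurations with $E_t(f) \neq \emptyset$ and both $a_t, a_{t+1}$ non-real (e.g.\ real root at position $1$, pairs at $(t-1,t)$ and $(t+1,t+2)$) do occur on the rays; handling them needs the monotonicity-in-$|p|$ argument that the paper develops only afterwards in Corollaries~\ref{Cor:Discriminant} and~\ref{Cor:LocalStructureUzero}, so you would have to prove it independently to avoid circularity. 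Your dictionary between ray parity and sign classes (with the identification $\theta \mapsto \theta + \pi$ collapsing the four quadrants to two classes) and the final connectivity argument are fine.
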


\begin{proof}
Let $a_1,\ldots,a_{s+t} \in \C^*$ denote the roots of $f_p$ (depending on $p$) with $|a_1| \leq \cdots \leq |a_{s+t}|$. By Theorem \ref{Thm:ComplementIn1Fan}, it suffices to consider the case $p \in F(s,t,q)$. 

By rescaling the norms of the roots, we can assume $|q| = 1$, and moreover, by uniformly
adding an offset to the arguments, we can even assume $q=1$.
For $p = 0$ every root has norm 1 and thus $f_0 \notin U_j^A$ for every $j \in \{1,\ldots,s+t-1\}$, i.e., we can always assume $p \in \C^*$. 
Since Bohl's Theorem~\ref{Thm:Bohl1} is only relevant for the case $j=t$, we first
consider the complementary cases $j \in \{1,\ldots,s+t-1\} \setminus \{t\}$.
Following the argument in the proof of Theorem \ref{Thm:ComplementIn1Fan}, the midpoint
\begin{eqnarray}
  \label{eq:midpointk}
	k & = & \frac{(s+t)(\pi + \arg(p) - \arg(q)) - t(\pi -\arg(q))}{2\pi},
\end{eqnarray}
of the interval in Bohl's Theorem \ref{Thm:Bohl2} is in $\frac{1}{2}\Z$ for $p \in F(s,t,q)$. Since the interval is symmetric around $k$ and the number of integers in the interval determines the number of roots of a particular norm, we have for all $j \in \{1,\ldots,s+t-1\} \setminus \{t\}$
\begin{eqnarray*}
 f \in U_j^A \text{ with } j \text{ even} & \text{iff} & k \notin \Z, \text{ and } \\
 f \in U_j^A \text{ with } j \text{ odd} & \text{iff} & k \in \Z.
\end{eqnarray*}
Thus, it only remains to show for which choices of $s,t$ and $\arg(p)$ we have $k \in \Z$. Since $\arg(q) = 0$, we have $p \in F(s,t,q)$ if and only if $\arg(p) = l \pi / (s+t)$ with $l \in \{0,\ldots,2(s+t)-1\}$. Hence, \eqref{eq:midpointk} simplifies to
$k = \frac{s + l}{2}$.
Since finally $p \in F(s,t,q)$ satisfies $p \in F^{\even}$ (respectively $p \in F^{\odd}$) if and only if $(s+t) \arg(p) / \pi$ is even (respectively odd), i.e., $l$ is even (respectively odd), the statement follows.

The non-connectedness of $U_{j}^A$ along the $\C$-slice $(T_A)_q$ for $q = 1$ follows directly from the fact that $\C \setminus F(s,t,q)^{\odd}$ respectively $\C \setminus F(s,t,q)^{\even}$ is not connected.

It only remains to investigate the special case $j = t$. The argument above remains valid for $j = t$ with the exception that, by the Theorems \ref{Thm:Bohl1} and \ref{Thm:LocationRealRoots}, we have additionally $f \in U_t^A$ if there exists a $z \in \C^*$ such that $f(z)$ is lopsided with dominating term $p z^t$.
\end{proof}

\begin{exa}We illustrate the different situations of the theorem.
\begin{enumerate}
	\item Let $f = x^5 + 6 x^2 + 1$, i.e., $s$ is odd, $t$ is even and $p \in F^{\even}(s,t,q)$.
        By Theorem~\ref{Thm:LocalStructureUalpha}, $f \in U_2^A \cap U_4^A$. 
        Since always $f \in U_0^A$ and $f \in U_{s+t}^A$, this gives
        $f \in U_0^A \cap U_2^A \cap U_4^A \cap U_5^A$.
        We verify this by determining the absolute values of $\cV(f)$ approximately:
	 $$0.4082, 0.4082, 1.8030, 1.8030, 1.8462.$$
	\item Let $f = x^5 - 6 x^2 + 1$, i.e., $s$ is odd, $t$ is even and $p \in F^{\odd}(s,t,q)$. 
         By Theorem~\ref{Thm:LocalStructureUalpha}, $f \in U_1^A \cap U_3^A$, 
         and clearly at the point $v=1$ the function $f$ is lopsided with dominating term $6 v^2$.
         Hence, altogether, $f \in U_0^A \cap U_1^A \cap U_2^A \cap U_3^A \cap U_5^A$.
     The approximate absolute values of $\cV(f)$ do verify this:
	$$0.4060, 0.4106, 1.7849, 1.8332, 1.8332.$$
\item $f = x^5 + 6 x^3 + 1$, i.e., $s$ is even, $t$ is odd and $p \in F^{\even}(s,t,q)$. 
      By Theorem~\ref{Thm:LocalStructureUalpha}, $f \in 
      U_1^A \cap U_3^A$.
      Hence, altogether, $f \in U_0^A \cap U_1^A \cap U_3^A \cap U_5^A$.
      The absolute values of $\cV(f)$ are approximately
	$$0.5416, 0.5546, 0.5546, 2.4498, 2.4498.$$
	i.e., $a_{1}$ is the unique real root and thus $f \in U_0^A \cap U_1^A \cap U_3^A \cap U_5^A$.
	\item Let $f = x^4 + 0.5 x^1 + 1$, i.e., $s$ is odd, $t$ is odd and $p \in F^{\even}(s,t,q)$. 
              By Theorem~\ref{Thm:LocalStructureUalpha}, $f \in U_2^A$, so that
      altogether $f \in U_0^A \cap U_2^A \cap U_4^A$.
      The absolute values of $\cV(f)$ are approximately
	$$0.916, 0.916, 1.091, 1.091.$$
\end{enumerate}
\end{exa}

In the following we investigate the discriminant $D$ of trinomials $f$, Recall that the discriminant 
is a polynomial function depending on the coefficients of $f$, i.e.,
$D:T_A \to \C$, which vanishes when $f$ has a double root (see, e.g.,
\cite{Gelfand:Kapranov:Zelevinsky}). For general monic polynomials 
of degree $n$, the discriminant $D(f)$ can
be defined by $D(f) = (-1)^{n(n-1)/2} \resultant(f,f')$, where $\resultant(f,f')$ 
is the resultant of $f$ and its derivative $f'$.
An explicit formula for the discriminant of univariate trinomials is well-known; see also part (3) of Egerv\'{a}ry's Theorem \ref{Thm:Egervary1}:

\begin{lemma}(Greenfield, Drucker \cite{Greenfield:Drucker})
Let $f = z^{s+t} + p z^t + q$ be a trinomial with $p,q \in \C$ and $\gcd(s,t) = 1$. Then the discriminant $D(f)$ of $f$ is given by
\begin{eqnarray*}
	D(f) & = & (-1)^{\frac{(s+t)(s+t-1)}{2}} q^{s} \left(q^s (s+t)^{s+t} - (-1)^{s+t} p^{s+t} s^s t^t \right).
\end{eqnarray*}
\label{Lem:Discriminant}
\end{lemma}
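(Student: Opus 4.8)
The plan is to compute $D(f)=(-1)^{(s+t)(s+t-1)/2}\resultant(f,f')$ directly, exploiting the fact that $f'$ factors explicitly. First I would write
\[
  f'(z) \ = \ (s+t)z^{s+t-1} + tp\,z^{t-1} \ = \ z^{t-1}\big((s+t)z^s + tp\big),
\]
so that the critical points of $f$ consist of a zero of multiplicity $t-1$ at the origin together with the $s$ roots $\beta_1,\dots,\beta_s$ of $(s+t)z^s+tp$, i.e.\ the $s$-th roots of $-tp/(s+t)$. Using the product-over-roots form of the resultant, $\resultant(f,f') = (-1)^{(s+t)(s+t-1)}(s+t)^{s+t}\prod_{f'(\beta)=0}f(\beta)$, where the sign is $+1$ since $(s+t)(s+t-1)$ is even, the product splits as $f(0)^{\,t-1}\cdot\prod_{k=1}^s f(\beta_k)$, and $f(0)=q$ supplies the outer factor $q^{t-1}$.

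The heart of the argument is $\prod_{k=1}^s f(\beta_k)$. Using $\beta_k^s=-tp/(s+t)$ I would reduce $f(\beta_k)=\beta_k^{s+t}+p\beta_k^t+q$ to the expression linear in $\beta_k^t$, namely $f(\beta_k)=\tfrac{s}{s+t}\,p\,\beta_k^t+q$. Writing $\beta_k=\rho\zeta^k$ with $\zeta=e^{2\pi i/s}$ and $\rho^s=-tp/(s+t)$, the coprimality hypothesis $\gcd(s,t)=1$ enters decisively: the map $k\mapsto\zeta^{kt}$ permutes the $s$-th roots of unity, so $\{\beta_k^t\}_k=\{\rho^t\omega:\omega^s=1\}$. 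The product then becomes $\prod_{\omega^s=1}(a\omega+q)$ with $a=\tfrac{s}{s+t}p\,\rho^t$, which I would evaluate by the cyclotomic identity $\prod_{\omega^s=1}(a\omega+b)=b^s-(-1)^s a^s$, itself read off from $\prod_{\omega^s=1}(X-\omega)=X^s-1$. Substituting $a^s=\tfrac{s^s}{(s+t)^s}p^s\rho^{ts}$ and $\rho^{ts}=(\rho^s)^t=(-1)^t\tfrac{t^t}{(s+t)^t}p^t$ collapses this to $q^s-(-1)^{s+t}\tfrac{s^st^t}{(s+t)^{s+t}}p^{s+t}$; multiplying by the prefactor $(s+t)^{s+t}$ produces exactly the bracketed factor $q^s(s+t)^{s+t}-(-1)^{s+t}s^st^tp^{s+t}$.

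I expect the main obstacle to be sign and exponent bookkeeping rather than any conceptual difficulty: one must track the $(-1)^{s+t}$ contributions arising from $\rho^{ts}$ and from the cyclotomic identity, and verify that the monomial power of $q$ coming from the $(t-1)$-fold critical point at the origin combines correctly with the bracket. As consistency checks I would test the formula on $f=z^2+pz+q$ (discriminant $p^2-4q$) and $f=z^3+pz+q$ (discriminant $-4p^3-27q^2$); these small cases also pin down the exponent of the outer power of $q$, which the computation delivers as $f(0)^{t-1}=q^{t-1}$.

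An alternative, less computational route would start from the fact that $D$ vanishes precisely when $f$ has a double root, which by Egerv\'{a}ry's Theorem~\ref{Thm:Egervary1}(3) happens exactly when $(-1)^{s+t}q^s(s+t)^{s+t}=p^{s+t}s^st^t$. This identifies the irreducible bracketed factor up to a monomial in $q$ and a constant; one could then fix the monomial factor and the scalar by comparing the top-degree term in $p$ (or, again, by a single small example). I regard the direct resultant calculation as the cleaner and more self-contained option, with Egerv\'{a}ry's criterion serving as an independent sanity check on the bracketed factor.
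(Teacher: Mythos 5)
Your resultant computation is sound in every step: the factorization $f'=z^{t-1}\bigl((s+t)z^s+tp\bigr)$, the product formula $\resultant(f,f')=(-1)^{(s+t)(s+t-1)}(s+t)^{s+t}\prod_{f'(\beta)=0}f(\beta)$ with even sign exponent, the reduction $f(\beta_k)=\tfrac{s}{s+t}p\,\beta_k^t+q$ via $\beta_k^s=-tp/(s+t)$, the decisive use of $\gcd(s,t)=1$ to let $k\mapsto\zeta^{kt}$ permute the $s$-th roots of unity, and the evaluation $\prod_{\omega^s=1}(a\omega+q)=q^s-(-1)^sa^s$ with $a^s=(-1)^t s^st^t p^{s+t}/(s+t)^{s+t}$. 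Note that the paper itself gives no proof of Lemma~\ref{Lem:Discriminant} --- it is quoted from Greenfield--Drucker --- so any complete derivation is necessarily independent of the paper; yours is the natural direct one.

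There is, however, a point you half-notice but never confront: your computation delivers the outer factor $q^{t-1}$ (from $f(0)^{t-1}$), i.e.
\begin{equation*}
 D(f) \ = \ (-1)^{\frac{(s+t)(s+t-1)}{2}}\, q^{t-1}\bigl(q^s(s+t)^{s+t}-(-1)^{s+t}p^{s+t}s^st^t\bigr),
\end{equation*}
whereas the lemma as printed has $q^{s}$; the two agree only when $s=t-1$. Your own sanity checks decide the matter in your favor: for $f=z^2+pz+q$ (so $s=t=1$) the printed formula gives $-q(4q-p^2)=q(p^2-4q)$ rather than $p^2-4q$, and for $z^3+pz+q$ it gives $-q^2(27q^2+4p^3)$ rather than $-4p^3-27q^2$, while $q^{t-1}=q^0=1$ is correct in both cases. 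A quasi-homogeneity check confirms this: giving $p$ weight $s$ and $q$ weight $s+t$ (their degrees in the roots), $D$ must have weight $(s+t)(s+t-1)$, which $q^{t-1}$ times the bracket has and $q^{s}$ times the bracket does not. So the displayed statement contains a typo, your proof establishes the corrected version, and you should say this explicitly instead of leaving the mismatch between your $q^{t-1}$ and the lemma's $q^{s}$ unremarked --- as written, your argument does not literally prove the displayed identity. This is also precisely where your fallback route through Egerv\'ary's Theorem~\ref{Thm:Egervary1}(3) is blind: on $T_A\cong(\C^*)^2$ the double-root locus determines only the bracketed factor, and the monomial $q^{t-1}$ (reflecting the $(t-1)$-fold root at the origin as $q\to 0$) is invisible there, so the direct computation, or a single check at $z^2+pz+q$, is indispensable for pinning it down. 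Reassuringly, the paper's downstream uses (Corollaries~\ref{Cor:Discriminant} and \ref{Cor:LocalStructureUzero}, Theorem~\ref{Thm:TopologyMiddleTerm}) work with $\cV(D)$ inside $T_A$, where $q\neq 0$, so only the bracketed factor matters and the typo is harmless there.
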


From our earlier statements, we can conclude additional information about $D$.

\begin{cor}Any trinomial lying on the hypersurface defined by the discriminant $D$ lies on the boundary
of the complement $(U_t^A)^c$ of $U_t^A$. 
In particular, if a trinomial $f =  z^{s+t} + p z^t + q$ with $p,q \in \C^*$
has a double root $a_j = a_{j+1}$, then $j = t$.
\label{Cor:Discriminant}
\end{cor}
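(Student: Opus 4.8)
The plan is to reduce the whole corollary to the single sharp fact that a double root of $f$ occupies precisely position $j=t$ in the norm ordering; once that is established, both assertions follow with little extra work. First I would note that on $T_A$ (where $q\neq 0$) the factor $q^s$ in Lemma~\ref{Lem:Discriminant} never vanishes, so $f\in\cV(D)$ holds if and only if $f$ has a repeated root. By Proposition~\ref{Prop:Atmost2Roots} at most two roots can share a norm, so any root of multiplicity $\geq 2$ is in fact a double root $a_j=a_{j+1}$ of multiplicity exactly two. Thus the two statements of the corollary concern the same object, and everything hinges on locating $j$.

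To pin down $j=t$ I would \emph{not} perturb $f$ directly: a naive root count around the circle $|z|=|a_j|$ only confines $j$ to $\{t-1,t,t+1\}$, since the two merging roots may pass to either side. Instead I reduce to the real case. As the double root yields two roots (with multiplicity) of identical norm, Theorem~\ref{Thm:Egervary1}(2) shows that $f$ is equivalent to a real trinomial $\tilde f$. Every equivalence $f(z)=\tilde f(z e^{i\psi})$ or $f(z)=\ovl{\tilde f}(z e^{i\psi})$ acts on roots by $a\mapsto a e^{i\psi}$ or $a\mapsto \ovl{a}\,e^{-i\psi}$, hence preserves all root norms, their ordering, and multiplicities; so $\tilde f$ has a double root at the \emph{same} index $j$. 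This double root of the real trinomial $\tilde f$ must itself be real: were it some nonreal $\alpha$, then $\ovl{\alpha}\neq\alpha$ would be a second double root of the same norm, producing four roots of norm $|\alpha|$ and contradicting Proposition~\ref{Prop:Atmost2Roots}. Finally, the double-root case at the end of the proof of Theorem~\ref{Thm:LocationRealRoots} (via the identity~\eqref{eq:pajt}, which forces the amoeba-complement component through $\log|a_j|$ to have order $t$) yields exactly $j=t$ for a real double root. Hence $j=t$ for $f$ as well, which is the ``in particular'' assertion.

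It remains to upgrade ``$f\in\cV(D)$'' to ``$f\in\partial\big((U_t^A)^c\big)$''. Knowing $j=t$, a trinomial $f\in\cV(D)$ satisfies $|a_t|=|a_{t+1}|$, so by definition $f\notin U_t^A$, i.e.\ $f\in(U_t^A)^c$; since $U_t^A$ is open, this closed complement already contains $f$. For the boundary it then suffices to produce nearby elements of $U_t^A$. Here I would invoke Theorem~\ref{Thm:ComplementIn1Fan}: in the $\C$-slice $(T_A)_q$ through $f$ the set $(U_t^A)^c$ lies in the union of rays $F(s,t,q)$, which is nowhere dense in $(T_A)_q\cong\C$. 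Consequently $U_t^A$ is dense in this slice, so $f\in\ovl{U_t^A}$, and therefore $f\in(U_t^A)^c\cap\ovl{U_t^A}=\partial\big((U_t^A)^c\big)$, as claimed. Equivalently, the same slice argument shows $(U_t^A)^c$ has empty interior in $T_A$, so this closed set coincides with its own boundary.

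The only genuine obstacle I expect is the determination of the index at its exact value $j=t$ rather than the weaker $j\in\{t-1,t,t+1\}$; the reduction to a real trinomial, together with the verification that its double root is forced to be real, is precisely what removes the ambiguity, and I would take care to check that the equivalence used there really preserves the norm-ordering of the roots.
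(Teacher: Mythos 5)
Your proposal is correct, but it diverges from the paper's proof at both of its key steps. For pinning down $j=t$, the paper does not reduce to the real case at all: it observes that the double-root computation in the proof of Theorem~\ref{Thm:LocationRealRoots} (namely $f'(a_j)=0$ gives $|a_j|^s = |p|\,t/(s+t)$ and, via the derivative of $z^s + p + qz^{-t}$, $|a_j|^{s+t} = |q|\,t/s$, whence the degenerate identity~\eqref{eq:pajt} and order $t$ by Bohl/lopsidedness) never uses realness of $a_j$, so it applies verbatim to a complex double root. Your detour through Egerv\'ary's Theorem~\ref{Thm:Egervary1}(2) is a valid alternative --- and your verification that the equivalent real trinomial's double root must itself be real (else $\alpha$ and $\ovl{\alpha}$ give four roots of norm $|\alpha|$, contradicting Proposition~\ref{Prop:Atmost2Roots}) is exactly the check that makes it work --- but it buys the same conclusion at the price of invoking an external classical theorem, where one should also note that Egerv\'ary's ``two roots of the same norm'' must be read with multiplicity (the paper's Theorem~\ref{Thm:Egervary1}(3) confirms this reading). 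For the boundary statement, the paper argues quantitatively: for fixed $q$ there is a unique $|p^*|$ with a double root, $f_{p'}$ is lopsided on $E_t(f_{p'})$ hence in $U_t^A$ for all $|p'|>|p^*|$, and the computation forces $|p|=|p^*|$; this explicit description is then reused in Corollary~\ref{Cor:LocalStructureUzero} to locate $\cV(D)$ on $\partial \cB_q^{\bullet}$. Your softer argument --- $f \in (U_t^A)^c$ once $j=t$, while Theorem~\ref{Thm:ComplementIn1Fan} confines $(U_t^A)^c$ in each slice $(T_A)_q$ to the nowhere dense ray union $F(s,t,q)$, so $U_t^A$ is dense and $f \in \ovl{U_t^A}$ --- is correct and arguably more elementary, and your observation that $(U_t^A)^c$ therefore has empty interior and equals its own boundary is a pleasant by-product; what it does not deliver is the exact radius $|p^*|$ that the paper's method produces for later use.
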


\begin{proof} 
In the proof of Part (2) of Theorem \ref{Thm:MultipleNormsCorrespondToSingularities} we have seen that for fixed $q \in \C^*$ there is a unique choice $|p^*|$ for the norm of $p$ such that $a_j = a_{j+1}$.
For every $p'$ with
$|p'| > |p^*|$, the resulting trinomial $f_{p'}$ is lopsided in the interval $E_t(f_{p'})$.
Thus, $f_{p'} \in U_t^A$ for every $|p'| > |p^*|$.

It remains to show that if $f$ has a double root $a_j = a_{j+1} \neq 0$, then $|p| = |p^*|$
and $j = t$. 
Since the argument for the case of a double real root in the proof of Theorem \ref{Thm:LocationRealRoots}
also holds in the complex case, we can deduce $j=t$ and 
$|p a_j^t| \ = \ |q| \frac{s+t}{s} \ = \ |q| + |a_j|^{s+t}$,
which shows that $|p|$ coincides with the unique choice $|p^*|$ introduced above.
Altogether, $\mathcal{V}(D) \subseteq \partial((U_t^A)^c)$.
\end{proof}

\begin{cor}For fixed $q \in \C^*$, let $f_p = z^{s+t} + p z^t + q$ be a parametric
family of trinomials with parameter $p \in \C$. Then:
\[
\begin{array}{lrcl}
 \label{eq:parametric2}
 \text{\emph{For} } s+j \text{\emph{ even we have:}} & f \in U_t^A & \text{if and only if} & p \notin F^{\even} \cap B_r(0), \\
 \text{\emph{For} } s+j \text{\emph{ odd we have:}} & f \in U_t^A & \text{if and only if} & p \notin F^{\odd} \cap B_r(0),
\end{array}
\]
where $r = |q|^{s/(s+t)} \left((t/s)^{s/(s+t)} + (s/t)^{t/(s+t)}\right)$ and $B_r(0)$
is the closed disk with radius $r$ around the origin. 
Furthermore, the $s+t$ intersection points of $F(s,t,q)$ with the
boundary of this disk equal the intersection of $(T_A)_q$
with the $\C$-slice of the discriminant zero set $\cV(D)$ obtained by fixing $q$.
\label{Cor:LocalStructureUzero}
\end{cor}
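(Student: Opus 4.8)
The plan is to start from Theorem~\ref{Thm:LocalStructureUalpha}, whose statement for $j=t$ already asserts that $f_p \in U_t^A$ if and only if $p \notin F^{\even}$ (resp.\ $p \notin F^{\odd}$), \emph{or} there is some $v \in \R_{>0}$ at which $f_p$ is lopsided with dominating term $p v^t$. All that remains is therefore to pin down, among the parameters $p \in F(s,t,q)$, exactly those at which this extra lopsidedness can occur, and to identify that locus with the disk $B_r(0)$. By Bohl's Theorem~\ref{Thm:Bohl1}, lopsidedness with dominating term $p v^t$ is precisely the existence of some $v>0$ with $|p| v^t > |q| + v^{s+t}$.

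First I would carry out a single-variable analysis of $g(v) = |p| v^t - v^{s+t} - |q|$ on $\R_{>0}$. Since $g(0) = -|q| < 0$ and $g(v) \to -\infty$ as $v \to \infty$, the maximum of $g$ is attained at the unique critical point $v^\ast = (t|p|/(s+t))^{1/s}$. Substituting back gives a closed form for $g(v^\ast)$, and solving $g(v^\ast) = 0$ for $|p|$ produces exactly the value $r$ of the statement; the awkward exponents collapse once one factors out $(t/s)^{s/(s+t)}$ and uses the identity $(s/t)^{t/(s+t)} = (s/t)\,(t/s)^{s/(s+t)}$. Consequently $g(v^\ast) > 0$ if and only if $|p| > r$, so the lopsidedness occurs exactly for $p \notin B_r(0)$, and hence $f_p \notin U_t^A$ if and only if $p \in F^{\even}\cap B_r(0)$ (resp.\ $F^{\odd}\cap B_r(0)$).

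The conceptually cleanest route, which also avoids redoing the algebra, is to recognize $r$ as the double-root threshold $|p^\ast|$ from Corollary~\ref{Cor:Discriminant}: at a double root $a_j=a_{j+1}$, equation~\eqref{eq:pajt} reads $|p|\,|a_j|^t = |q| + |a_j|^{s+t}$, i.e.\ $g(|a_j|)=0$, while $f'(a_j)=0$ forces $|a_j|=v^\ast$. Thus at $|p|=r$ the maximum of $g$ is attained and equals $0$, which simultaneously establishes the threshold and identifies $r$ with the modulus of every point of the discriminant slice.

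For the last assertion I would invoke Lemma~\ref{Lem:Discriminant}: for fixed $q \in \C^\ast$ the slice $\cV(D) \cap (T_A)_q$ is the zero set of $q^s(s+t)^{s+t} - (-1)^{s+t} p^{s+t} s^s t^t$, i.e.\ the $s+t$ solutions of $p^{s+t} = (-1)^{s+t} q^s (s+t)^{s+t}/(s^s t^t)$. Their common modulus is $|q|^{s/(s+t)}(s+t)/(s^s t^t)^{1/(s+t)}$, which a short rewriting shows equals $r$, so all of them lie on $\partial B_r(0)$. Taking arguments yields $(s+t)\arg(p) \equiv s\arg(q) + (s+t)\pi \pmod{2\pi}$, so each solution has argument of the form $(s\arg(q)+k\pi)/(s+t)$ with $k \equiv s+t \pmod{2}$; these are exactly the angles of $F^{\even}$ when $s+t$ is even and of $F^{\odd}$ when $s+t$ is odd. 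Hence the $s+t$ discriminant points are precisely the intersection of the relevant sub-fan with $\partial B_r(0)$, matching the parities already appearing in the first part. The main obstacle is purely bookkeeping---checking that the three a priori different quantities (the lopsidedness threshold, the double-root modulus $|p^\ast|$, and the modulus of the discriminant points) all coincide with $r$---which the double-root identity~\eqref{eq:pajt} resolves in one stroke.
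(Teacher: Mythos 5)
Your proof is correct, and it shares the paper's overall skeleton --- reduce via Theorem \ref{Thm:LocalStructureUalpha} to locating the lopsidedness locus along the fan, and tie the critical radius to the double-root identity \eqref{eq:pajt} --- but it differs in two substantive places. For the disk itself, the paper never maximizes $g(v)=|p|v^t-v^{s+t}-|q|$; instead it argues qualitatively that lopsidedness depends only on moduli of coefficients, holds on an open set, and is preserved under increasing $|p|$, so the non-lopsided locus along each ray is a closed segment, and it then reads off the radius from the double-root trinomial of Corollary \ref{Cor:Discriminant}: since $f_{|p|+\eps}$ is lopsided at $a_t$ for every $\eps>0$, the boundary value of $|p|$ satisfies $|a_t|=\sqrt[s+t]{|q|t/s}$ together with \eqref{eq:pajt}, whence $|p|=r$. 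Your explicit single-variable optimization yields the same threshold in a more self-contained way (and makes the strictness at $|p|=r$, hence the closedness of $B_r(0)$, completely transparent), at the cost of redoing algebra the paper delegates to the double-root computation. For the final assertion, the paper gets the identification of $\partial B_r(0)\cap F$ with the discriminant slice for free from the double-root characterization (a double root makes $D$ vanish), whereas you verify it independently from the Greenfield--Drucker formula of Lemma \ref{Lem:Discriminant}, computing both the common modulus and the arguments of the $s+t$ solutions of $p^{s+t}=(-1)^{s+t}q^s(s+t)^{s+t}/(s^st^t)$. This buys a check the paper leaves implicit: the discriminant points land on the parity-correct sub-fan ($F^{\even}$ for $s+t$ even, $F^{\odd}$ for $s+t$ odd), which also clarifies that the corollary's phrase ``the $s+t$ intersection points of $F(s,t,q)$'' must be read as intersection with that sub-fan, since the full fan of $2(s+t)$ rays meets the circle in $2(s+t)$ points --- a detail your argument handles more precisely than the paper's.
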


\begin{rem}
For the special case of a quadratic equation, $s=t=1$, the corollary yields that there exist
two roots of the same norm if and only if $p \in \C^*$ satisfies
$\arg(p)  = \frac{1}{2}(\arg(q) + \pi k)$
for some $k \in \{0,2\}$ 
and $|p| \le 2 |q|^{1/2}$.
\end{rem}

\begin{proof}
By Theorem \ref{Thm:LocalStructureUalpha} $(U_{t}^A)^c$ restricted to the $\C$-slice given by fixing $q \in \C^*$ is the subset of $F(s,t,q)^{\odd}$ (respectively $F(s,t,q)^{\even}$) where $f$ is not lopsided with dominating term $pz^t$. 
It is well-known that lopsidedness is independent of arguments of coefficients (see \cite[Proposition 5.2]{Theobald:deWolff:Genus1} or \cite[Proposition 4.14]{deWolff:Diss}), it holds on an open subset of $U_t^A$ (by definition of lopsidedness; see Section \ref{SubSec:Amoebas}) and is kept under increasing of $|p|$. Thus,  $(U_{t}^A)^c$ along the $\C$-slice $(T_A)_q$ for fixed $q$ is given by $F(s,t,q)^{\odd}$ (respectively $F(s,t,q)^{\even}$) intersected with a closed disk. By the reasoning in the proof of Corollary \ref{Cor:Discriminant}, the boundary of
this disk is given by the choice of $|p| > 0$ such that $f = z^{s+t} + |p| z^t + |q|$ has a 
double root $a_t = a_{t+1}$ (and thus the discriminant vanishes).
Since $f_{|p|+\eps}$ is lopsided at $a_t$ for every $\eps > 0$, 
we know from that proof
$|a_t| = \sqrt[s+t]{|q| t/s}$ and, from~\eqref{eq:pajt}, 
\[
  |p| |q t / s|^{t/(s+t)} \ = \ |q| t/s + |q| \, ,
\]
whence $|p| = |q|^{s/(s+t)} \left((t/s)^{s/(s+t)} + (s/t)^{t/(s+t)}\right)$.
\end{proof}

\begin{exa}
As in Example \ref{Exa:Hypotrochoids1}, let $f = z^8 + p z^{3} + 0.5$, $g = z^7 + p z^{2} + 2.5$ and $h = z^5 + p z + 1$. Then $f,g,h$ have two roots with the same norm if and only if $p$ is located on the blue (dotted) union of rays in Figure \ref{Fig:HypotrochoidExample2}.
\label{Exa:Hypotrochoids2}
\end{exa}

\begin{figure}[ht]
\ifpictures
\includegraphics[width=0.32\linewidth]{./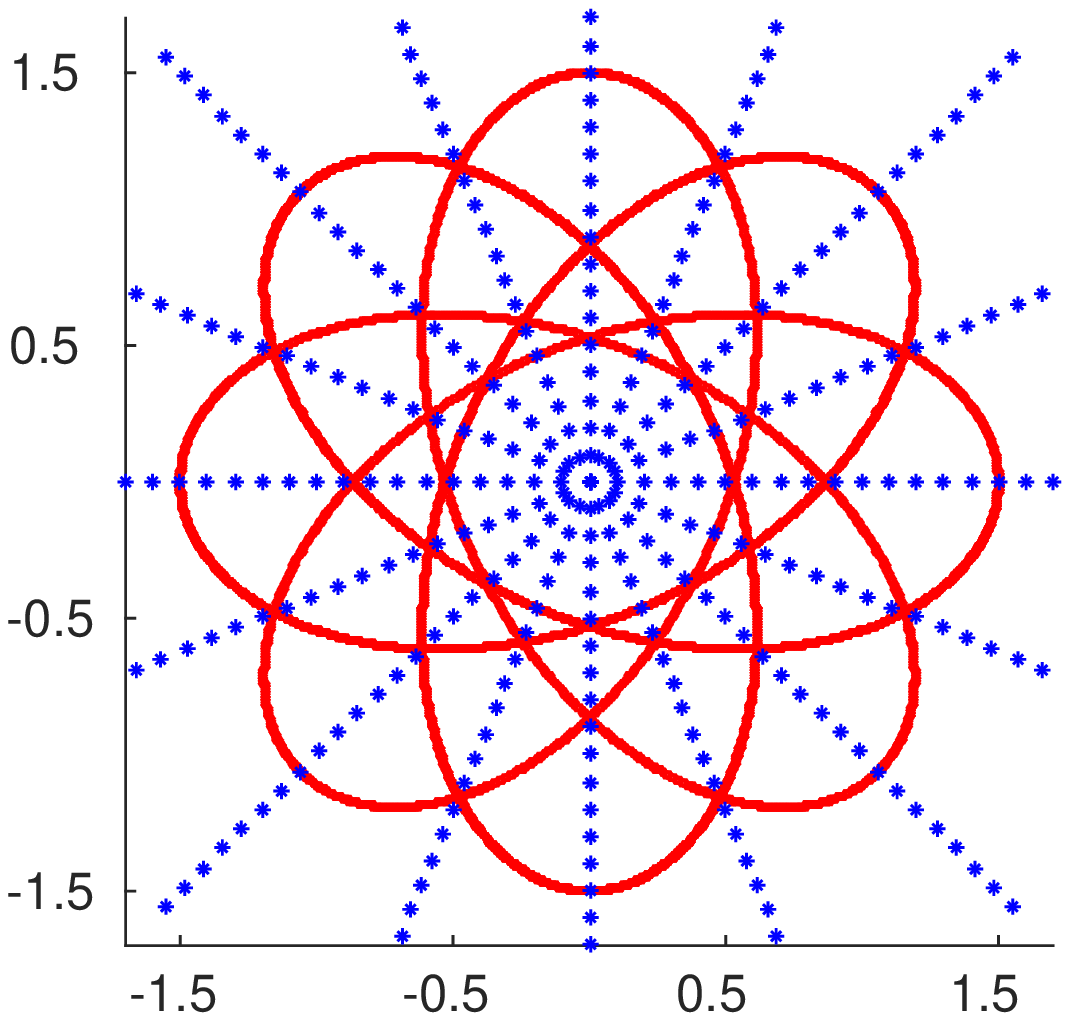}
\includegraphics[width=0.32\linewidth]{./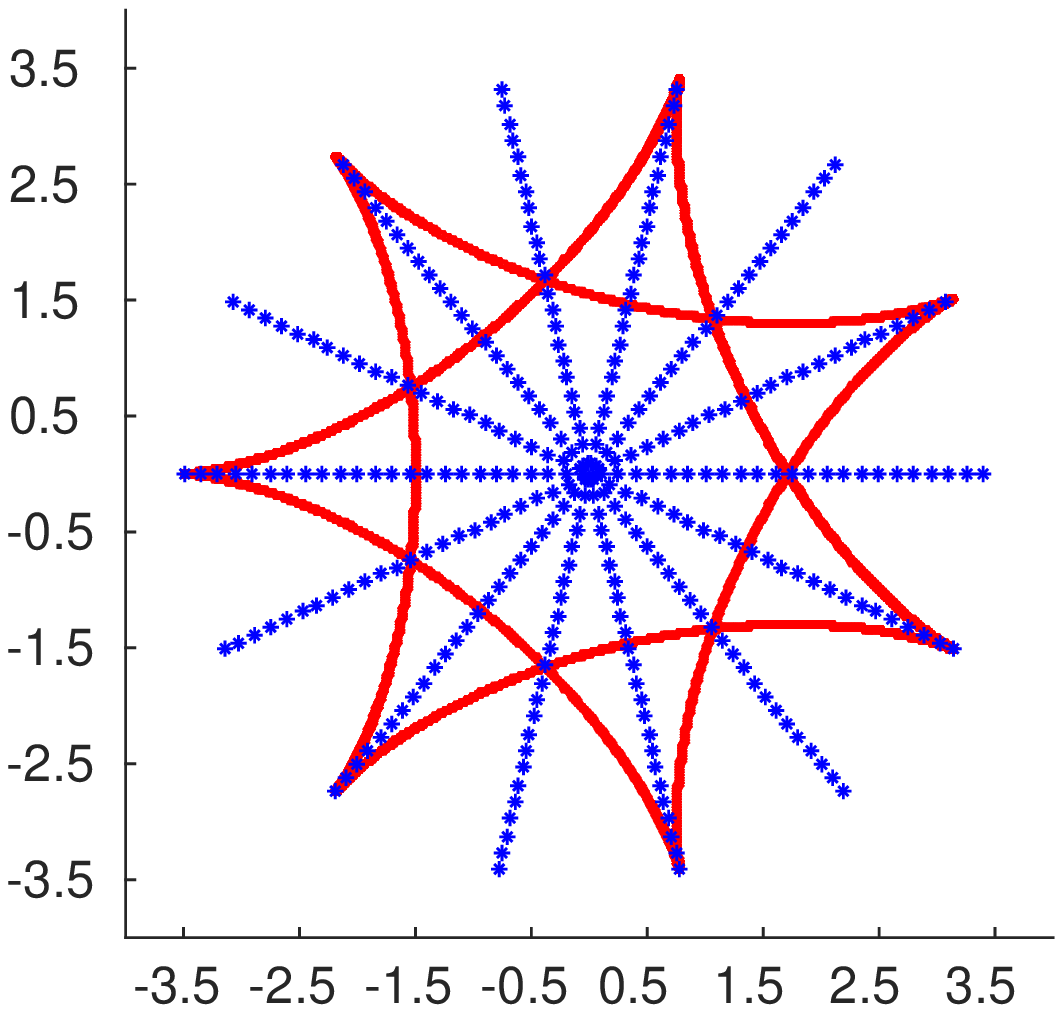}
\includegraphics[width=0.32\linewidth]{./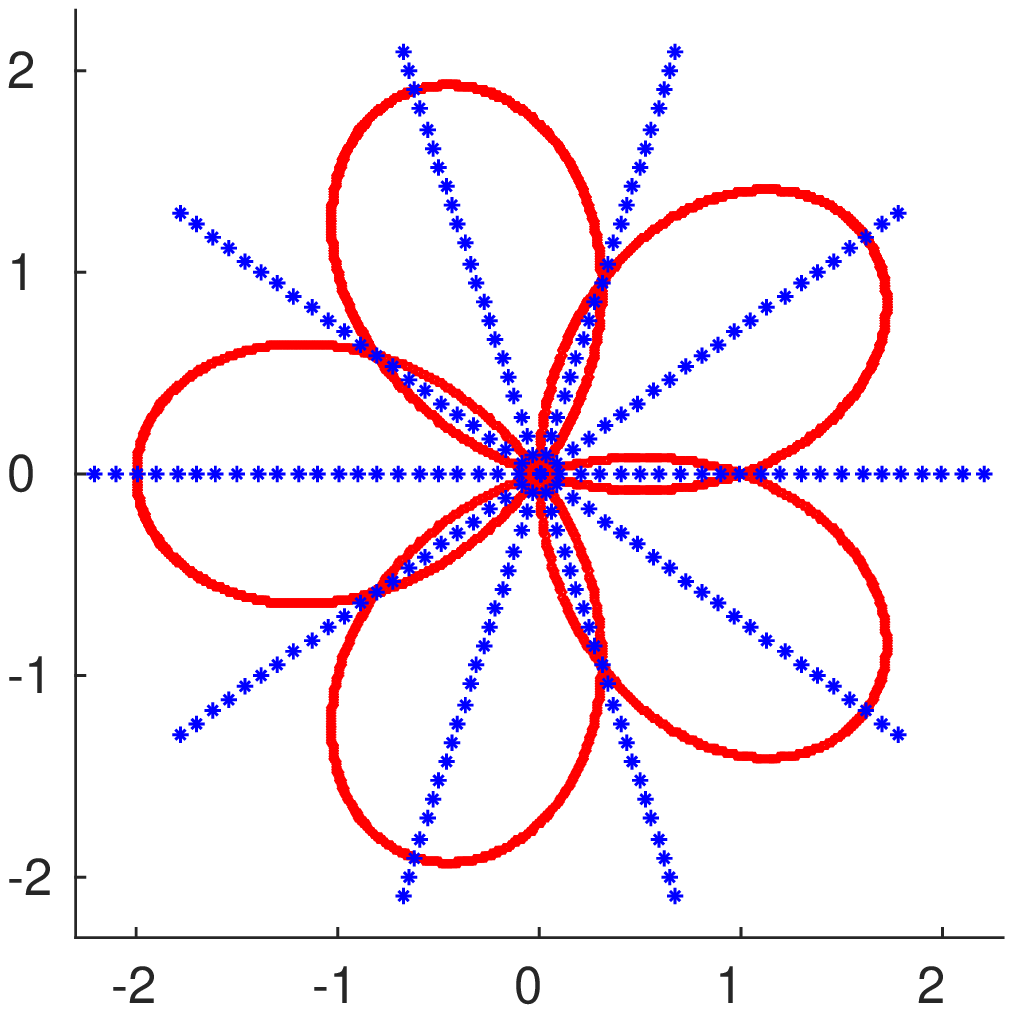}
\fi
	\caption{Three hypotrochoids:    The trajectory of the fiber functions $f^{1} - p$, $g^{1} - p$ and $h^{1} - p$ for the norm $v = 1$ and trinomials $f = z^8 + p z^{3} + 0.5$, $g = z^7 + p z^{2} + 2.5$ and $h = z^5 + p z + 1$ with their corresponding union of rays $F(s,t,q)$ (the blue dotted rays). The hypotrochoids and $F(s,t,q)$ are located in a complex plane, which is a $\C$-slice of the parameter space $\tilde{T}_A$.}
 	\label{Fig:HypotrochoidExample2}
\end{figure}

Finally, we investigate the local situation along a $\C^*$-slice of $T_A$ as in Theorem \ref{Thm:Hypotrochoid} but for fixed $p$ and variable $q$.
 That is, we like to know when a trinomial $f$ has a root of norm $v \in \R_{> 0}$ in dependence of the choice of $q \in \C^*$. It turns out that the natural objects needed to describe this situation are \textit{epitrochoids}, which can be regarded as canonical counterparts of hypotrochoids.

An \textit{epitrochoid} with parameters $R, r \in \Q_{> 0}, d \in \R_{> 0}$ is a parametric curve $\gamma \in \R^2 \cong \C$ given by
\begin{eqnarray}
\gamma: [0,2\pi) \ra \C, \quad \phi \mapsto (R + r) \cdot e^{i \cdot \phi} - d \cdot e^{i \cdot \lf(\frac{R + r}{r}\ri) \cdot \phi}.
	\label{Equ:Epitrochoid}
\end{eqnarray}

We say that a curve $\gamma$ is an \textit{epitrochoid up to a rotation} if there exists some reparametrization $\rho_k : [0,2\pi) \ra [0,2\pi), \phi \mapsto k + \phi \mod 2 \pi$ with $k \in [0,2\pi)$, such that $\gamma \circ \rho_k^{-1}$ is an epitrochoid. 

In Figure \ref{Fig:EpitrochoidExamples} we give some examples of epitrochoids. Like hypotrochoids, epitrochoids also belong to the family of roulette curves and have certain well-known special instances themselves, in particular \textit{epicycloids} given by $d = r$ and \textit{limacons} given by $R = r$
(see, e.g., \cite{Brieskorn:Knoerrer}).
\begin{figure}[ht]
\ifpictures
\includegraphics[width=0.32\linewidth]{./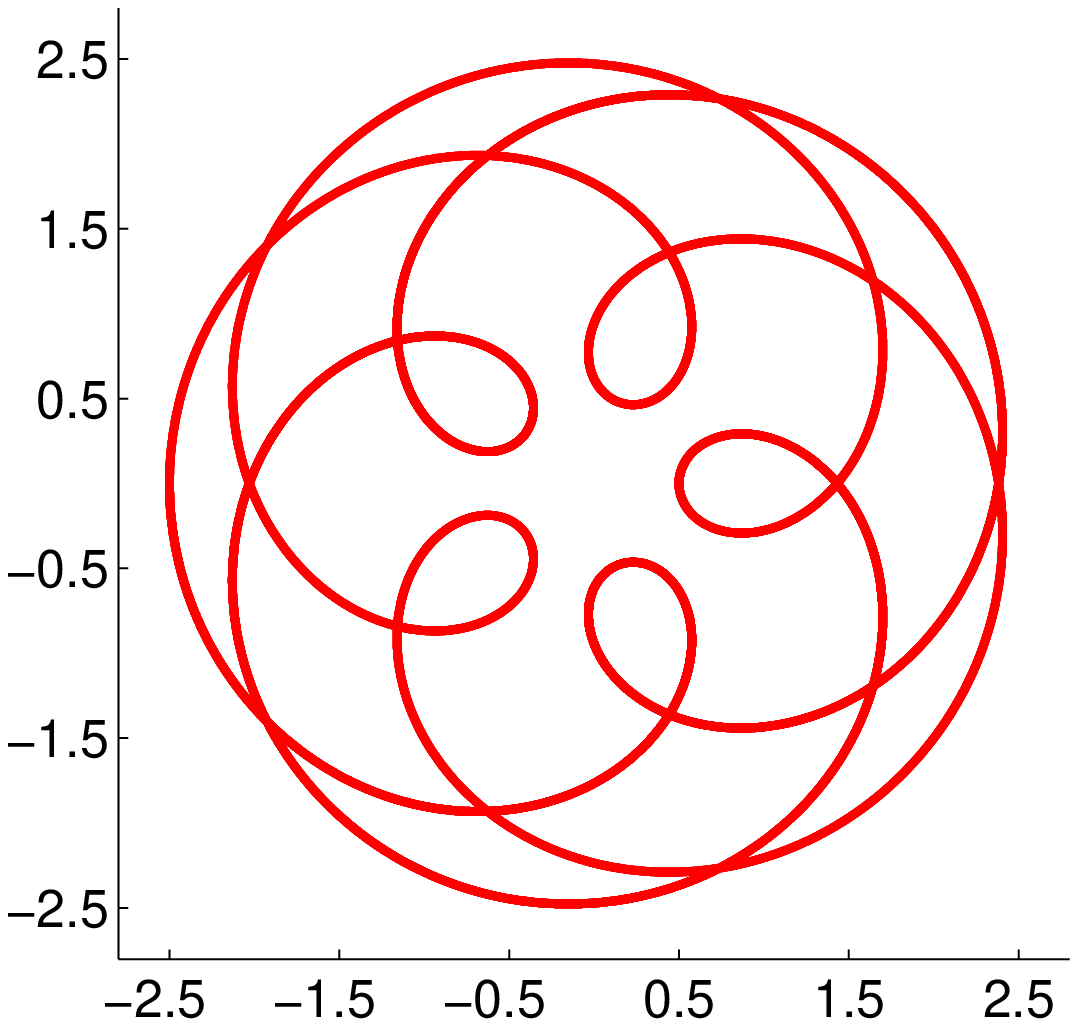}
\includegraphics[width=0.32\linewidth]{./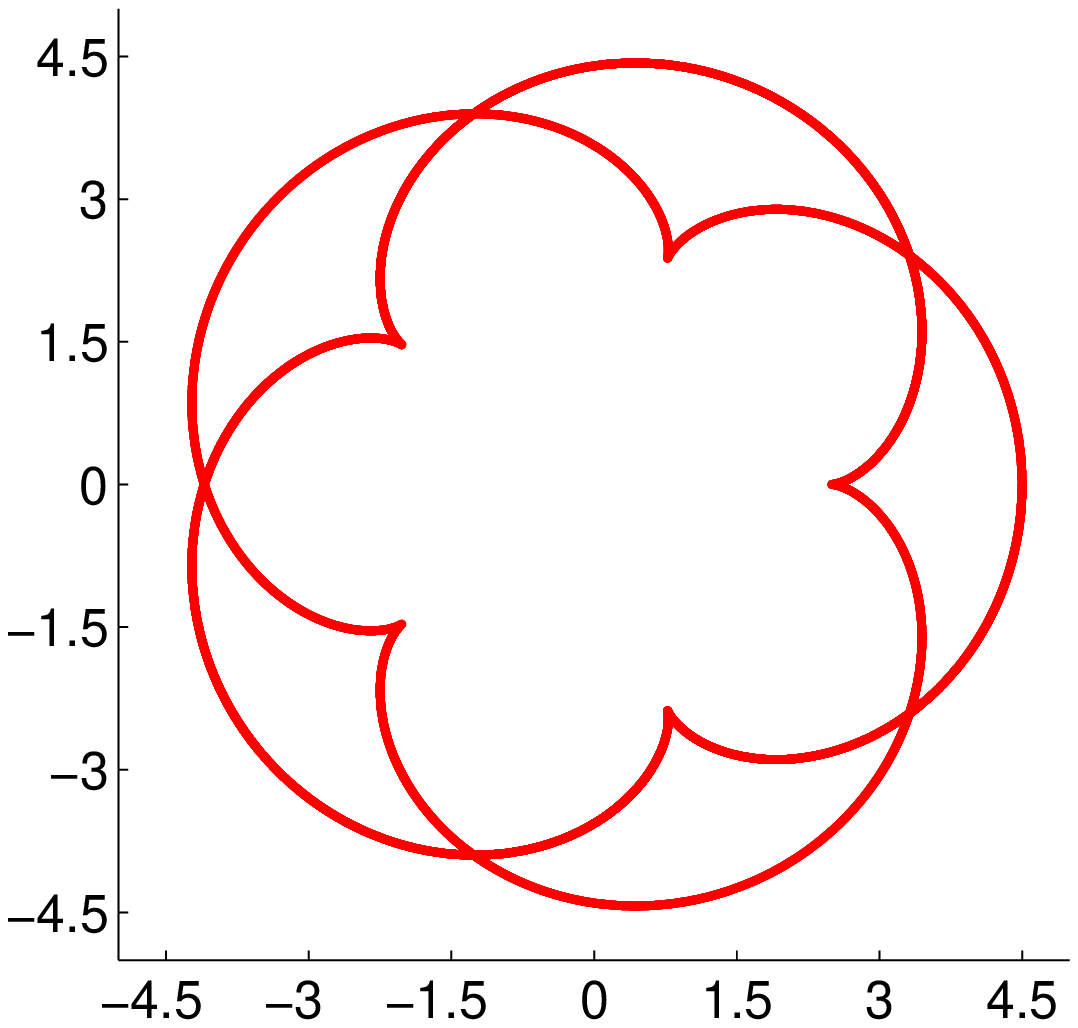}
\includegraphics[width=0.32\linewidth]{./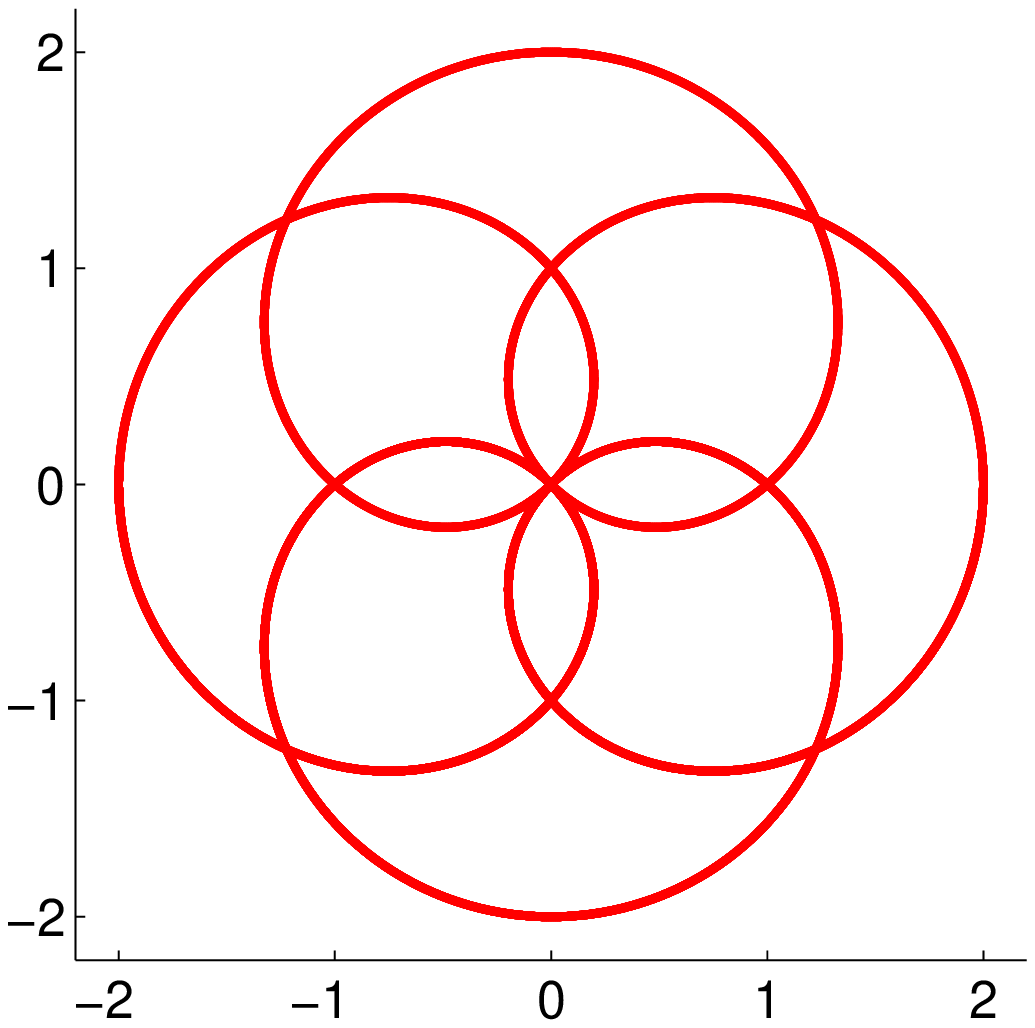}
\fi
\caption{Epitrochoids for $(R,r,d) = (15/16,9/16,1), (5/2,1,1)$ and $(4/5,1/5,1)$ in the complex plane. As we have $r = d$ in the second example, we see that the epitrochochoid in fact is an epicycloid.}
\label{Fig:EpitrochoidExamples}
\end{figure}

Geometrically, an epitrochoid is the trajectory of some fixed point with distance $d$ from the center of a circle with radius $r$ rolling along the exterior of a circle with radius $R$. 

With epitrochoids we can now obtain a counterpart to Theorem \ref{Thm:Hypotrochoid} regarding 
Problem (A) from the Introduction.

\begin{thm}
Let $f = z^{s+t} + p z^{t} + q$ with $p \in \C$ and $q \in \C^*$ be a trinomial and $v \in \R_{> 0}$. $f$ has a root of norm $v \in \R_{> 0}$ if and only if $q$ is located, up to a rotation, on an epitrochoid with parameters $R = v^t \cdot |p| \cdot  s/(s + t)$, $r = v^t \cdot |p| \cdot t/(s+t)$ and $d = v^{s+t}$.
% % % $R = v^t \cdot |p| \cdot  (s+t-1)/(s + t)$, $r = v^t \cdot |p| \cdot 1/(s+t)$ and $d = v^{s+t}$.
\label{Thm:Epitrochoid}
\end{thm}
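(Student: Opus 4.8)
The plan is to mimic exactly the proof strategy of Theorem~\ref{Thm:Hypotrochoid}, but now fixing $p \in \C^*$ and treating $q$ as the free parameter. By~\eqref{Equ:FiberFunction2}, the trinomial $f$ has a root of norm $v$ if and only if the fiber function $f^{v}$ vanishes for some $\phi \in [0,2\pi)$, i.e.
\begin{equation*}
  q + v^{s+t} \cdot e^{i(s+t)\phi} + |p| \cdot v^{t} \cdot e^{i(\arg(p) + t\phi)} \ = \ 0
  \quad \text{for some } \phi \in [0,2\pi).
\end{equation*}
Thus $-q$ must lie on the curve traced out by $\phi \mapsto v^{s+t} e^{i(s+t)\phi} + |p| v^{t} e^{i(\arg(p) + t\phi)}$, and the whole task is to rewrite this curve as an epitrochoid up to a rotation, in the normal form~\eqref{Equ:Epitrochoid}.

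First I would solve the elementary algebra that matches the stated parameters. Setting $R = v^t |p| s/(s+t)$ and $r = v^t |p| t/(s+t)$ gives $R + r = v^t |p|$ and $(R+r)/r = (s+t)/t$, while $d = v^{s+t}$. The natural reparametrization is the substitution $\phi' = t\phi$ (so that $\phi$ ranges appropriately as $\phi'$ runs over a multiple of $2\pi$), analogous to the choice $\phi' = s\phi$ used in Theorem~\ref{Thm:Hypotrochoid}. Under this substitution the term $|p| v^t e^{i(\arg(p)+t\phi)}$ becomes $(R+r)\,e^{i(\arg(p)+\phi')}$ and the term $v^{s+t} e^{i(s+t)\phi}$ becomes $d\, e^{i\frac{s+t}{t}\phi'} = d\, e^{i\frac{R+r}{r}\phi'}$. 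The only discrepancy with~\eqref{Equ:Epitrochoid} is a sign: the epitrochoid normal form carries $-d$ on the second summand and no argument offset on the first. I would absorb the overall argument offset $\arg(p)$ and the sign $-1 = e^{i\pi}$ into the rotation $\rho_k$, exactly as $\arg(q)$ was absorbed in the hypotrochoid case; shifting $\phi'$ by a constant rotates $e^{i\frac{R+r}{r}\phi'}$ by a fixed phase that can be chosen to realize the required $-d$, so that after this rotation the curve is literally an epitrochoid in the form~\eqref{Equ:Epitrochoid}.

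Since $-q$ lies on this rotated epitrochoid exactly when $q$ does (a global rotation by $\pi$ just composes with a further reparametrization and is again an epitrochoid up to a rotation), the equivalence follows. The main point to be careful about — the only genuine obstacle — is bookkeeping of the rotation: one must verify that a single constant shift of $\phi'$ can simultaneously account for the phase $\arg(p)$ on one summand and produce the sign change $+d \mapsto -d$ on the other, since the two summands rotate at different rates ($1$ versus $(R+r)/r$) under a shift of $\phi'$. This is handled precisely as in the hypotrochoid proof, where the offset $\arg(q)$ is likewise carried by only one of the two terms and the definition of ``epitrochoid up to a rotation'' is tailored to absorb exactly such a shift. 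Hence I expect the proof to be essentially a one-display computation followed by invoking the definition of epitrochoid up to a rotation, with no deeper difficulty than confirming these phase conventions.
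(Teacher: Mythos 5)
Your overall route coincides with the paper's: invoke \eqref{Equ:FiberFunction2}, match $R+r = v^t|p|$ and $(R+r)/r = (s+t)/t$, substitute $\phi' = t\phi$, and absorb the residual phase into the ``up to a rotation'' clause. However, the sign bookkeeping you propose does not work as stated. You ask a single shift $\phi' \mapsto \phi' + k$ to simultaneously cancel $\arg(p)$ on the first summand and turn $+d$ into $-d$ on the second; since the summands rotate at rates $1$ and $(s+t)/t$, this requires $k \equiv -\arg(p) \pmod{2\pi}$ and $\tfrac{s+t}{t}\,k \equiv \pi \pmod{2\pi}$ at once --- two congruences on one constant, generically incompatible (solvable only when $t\pi + (s+t)\arg(p) \in 2\pi\Z$). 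This is precisely the obstacle you yourself flagged, and your fallback --- absorbing a global rotation by $\pi$ ``via a further reparametrization'' --- is also not available under the paper's $\rho_k$-definition, since a planar rotation by $\theta$ is absorbable by a parameter shift only for $\theta$ in a discrete set determined by $s,t$.

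The repair is one line, and it is what the paper actually does: do not present the statement as ``$-q$ lies on a curve with two plus signs''; instead negate the whole curve and distribute the minus asymmetrically, writing
\begin{equation*}
 q \ = \ |p|\,v^{t}\, e^{i(\pi + \arg(p) + t\phi)} \ - \ v^{s+t}\, e^{i(s+t)\phi}.
\end{equation*}
The $-d$ of the normal form \eqref{Equ:Epitrochoid} then appears for free from the negation, and the only residual offset is $\pi + \arg(p)$, sitting on the $(R+r)$-summand alone --- exactly the single-offset situation of $\arg(q)$ in Theorem~\ref{Thm:Hypotrochoid}, which the rotation clause is designed to handle. (To be fair, even the paper is loose at this final step: strictly, a shift of $\phi'$ perturbs both summands, and the clean justification is that on the full closed trajectory, using $\gcd(s,t)=1$, any phase offset on a single summand is realized by a planar rotation of the standard epitrochoid. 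Your proposal inherits that looseness from the paper; the defect specific to your write-up is the sign step above, which is genuine but trivially repairable.)
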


\begin{proof}
By the same argument as in the proof of Theorem \ref{Thm:Hypotrochoid} we know that $f$ has a root of norm $v \in \R_{> 0}$ if and only if the corresponding fiber function $f^{v}$ satisfies
\begin{eqnarray}
	q & = & |p| v^{t} \cdot e^{i \cdot \left(\pi + \arg(p) + t \cdot \phi\right)} - v^{s+t} \cdot e^{i \cdot (s+t) \cdot \phi} \quad \text{ for some } \phi \in [0,2 \pi).\label{Equ:FiberVarietyTrinomials2}
\end{eqnarray}
The right hand side of the equation describes an epitrochochoid 
Namely, by definition of $R$ and $r$ we have
$R + r = v^t \cdot |p| \cdot (s/(s+t) + t/(s+t)) = v^t \cdot |p|$ and
\begin{eqnarray*}
	\frac{R+r}{r} & = & \frac{v^t \cdot |p|}{v^{t} \cdot |p| \cdot t/(s+t)} \ = \ \frac{s+t}{t}.
\end{eqnarray*}
Thus, by setting $\phi' = t \phi$,  \eqref{Equ:FiberVarietyTrinomials2} is equivalent to
\begin{eqnarray*}
	q & = & (R+r) \cdot e^{i \cdot \left(\pi + \arg(p) + \phi\right)} - d \cdot e^{i \cdot \frac{R+r}{r} \cdot \phi} \quad \text{ for some } \phi \in [0,2 \pi).
\end{eqnarray*}
By the definition of the epitrochoid in \eqref{Equ:Epitrochoid} the statement follows.
\end{proof}

\begin{exa}
As canonical counterpart to Example \ref{Exa:Hypotrochoids1} we investigate the trinomials $f = z^8 + \frac{3}{2} z^3 + q$, $g = z^7 - \frac{7}{2} z^2 + q$ and $h = z^5 + z + q$. Then $f,g$ and $h$ have a root of norm one if and only if $q$ is located on the epitrochoids with parameters $(R,r,d) = (15/16,9/16,1), (5/2,1,1)$ and $(4/5,1/5,1)$, which we depicted in Figure \ref{Fig:EpitrochoidExamples}.
\end{exa}

\section{The Topological Structure of the Parameter Space of Trinomials}
\label{Sec:TrinomialsTopology}

The aim of this section is to determine the fundamental groups of the sets 
$U_j^A\subseteq T_A$ and their complements $(U_j^A)^c \subseteq T_A$.
That is, we provide an answer to Problem (C).

As a main result we show that for every $j \in \{1,\ldots,s+t-1\} \setminus \{t\}$ both the set $U_j^A \subseteq T_A$ and its complement $(U_j^A)^c = T_A \setminus U_j^A$ as well as the discriminant zero set $\cV(D)$ can be deformation
retracted to a torus knot $K(s+t,s)$. $(U_t^A)^c$ can be deformation retracted to $\cV(D)$. Thus, all these sets are connected, but not simply connected and have fundamental group $\Z$ (see Theorem~\ref{Thm:TopologyTrinomials}).
 $U_t^A$ has a different topology; it has fundamental group $\Z^2$ (see Theorem~\ref{Thm:TopologyMiddleTerm}). 
Finally, we describe the amoeba and the coamoeba of $\cV(D)$ (Corollary \ref{Cor:DiscriminantAmoeba}). For background information about torus knots, see \cite{Burde:Zieschang, Hatcher}.

Note that, by Theorem~ \ref{Thm:LocalStructureUalpha}, for $j \neq t$ the sets $U_j^A $ are \textit{not} connected along a $\C$-slice given by a fixing $q \in \C^*$.

As a motivation and to provide an intuition about the structure of $T_A$ we give an example showing that a set $U_j^A \subseteq T_A$ can be connected although none of the sets $U_j^A$ intersected with a $\C$-slice of $T_A$ given by fixing $q \in
\C^*$ is connected.
\begin{exa}
Let $f = z^3 + 1.5 \cdot e^{i \cdot \arg(p)} z + e^{i \cdot \arg(q)}$ with $f = (z - a_1)(z - a_2)(z - a_3)$ and $|a_1| \leq |a_2| \leq |a_3|$. Assume, we want to construct a path $\gamma$ in $T_A$ from $(p_1,q_1) = (1.5 \cdot e^{i \cdot \pi/2},1)$ to $(p_2,q_2) = (1.5 \cdot e^{-i \cdot \pi/6},1)$ such that $\gamma \subseteq U_2^A$, i.e., $|a_2| \neq |a_3|$ for every point on $\gamma$. Theorem \ref{Thm:LocalStructureUalpha} implies that this is impossible if $\arg(q)$ remains constant for every point on $\gamma$. Similarly, we do not have $|a_2| \neq |a_3|$ for all points on an arbitrary path on a $\C$-slice of $T_A$ given by fixing $\arg(p)$ or in general by fixing an affine linear relation between $\arg(p)$ and $\arg(q)$. We illustrate this by investigating two closed paths starting and ending at $(p_1,q_1)$ given by
$\eta_1: [0,1] \rightarrow T_A, k \mapsto (1.5 \cdot e^{i \cdot \pi/2},e^{i \cdot 2k\pi})$ and $\eta_2: [0,1] \rightarrow T_A, k \mapsto (1.5 \cdot e^{i (1/4 + k) \cdot 2\pi},e^{i \cdot 2k\pi})$ 
(see Figure \ref{Fig:ZeroPathes}). But there exists a path $\gamma$ as desired given by $\gamma: [0,1] \rightarrow T_A, k \mapsto (1.5 \cdot e^{i (1/4 + 2k/3) \cdot 2\pi},e^{i \cdot 2k\pi})$ from $(p_1,q_1)$ to $(p_2,q_2)$ that is completely contained in $U_2^A$ (see Figure \ref{Fig:ZeroPathes}).
\end{exa}

\begin{figure}[ht]
\ifpictures
	\includegraphics[width=0.32\linewidth]{./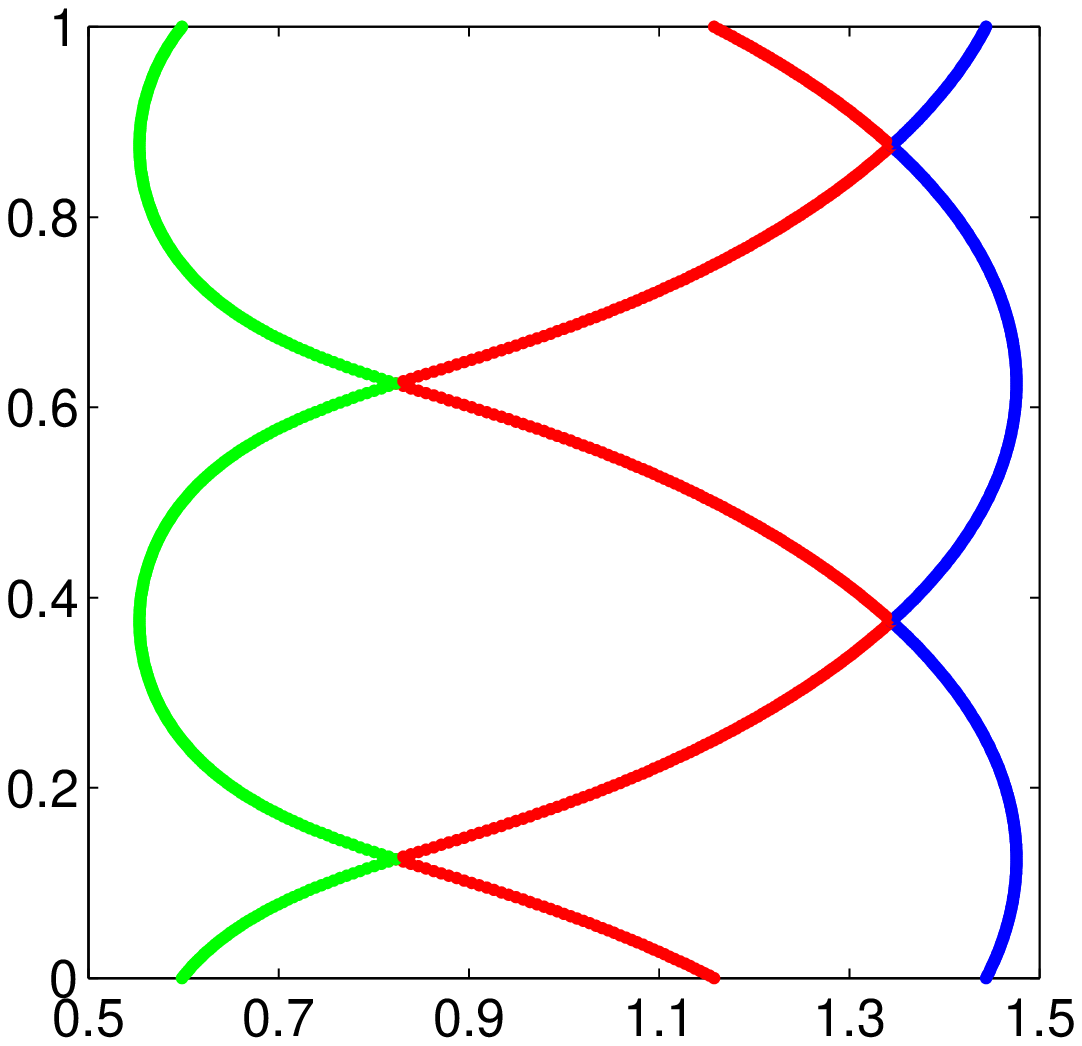}
	\includegraphics[width=0.32\linewidth]{./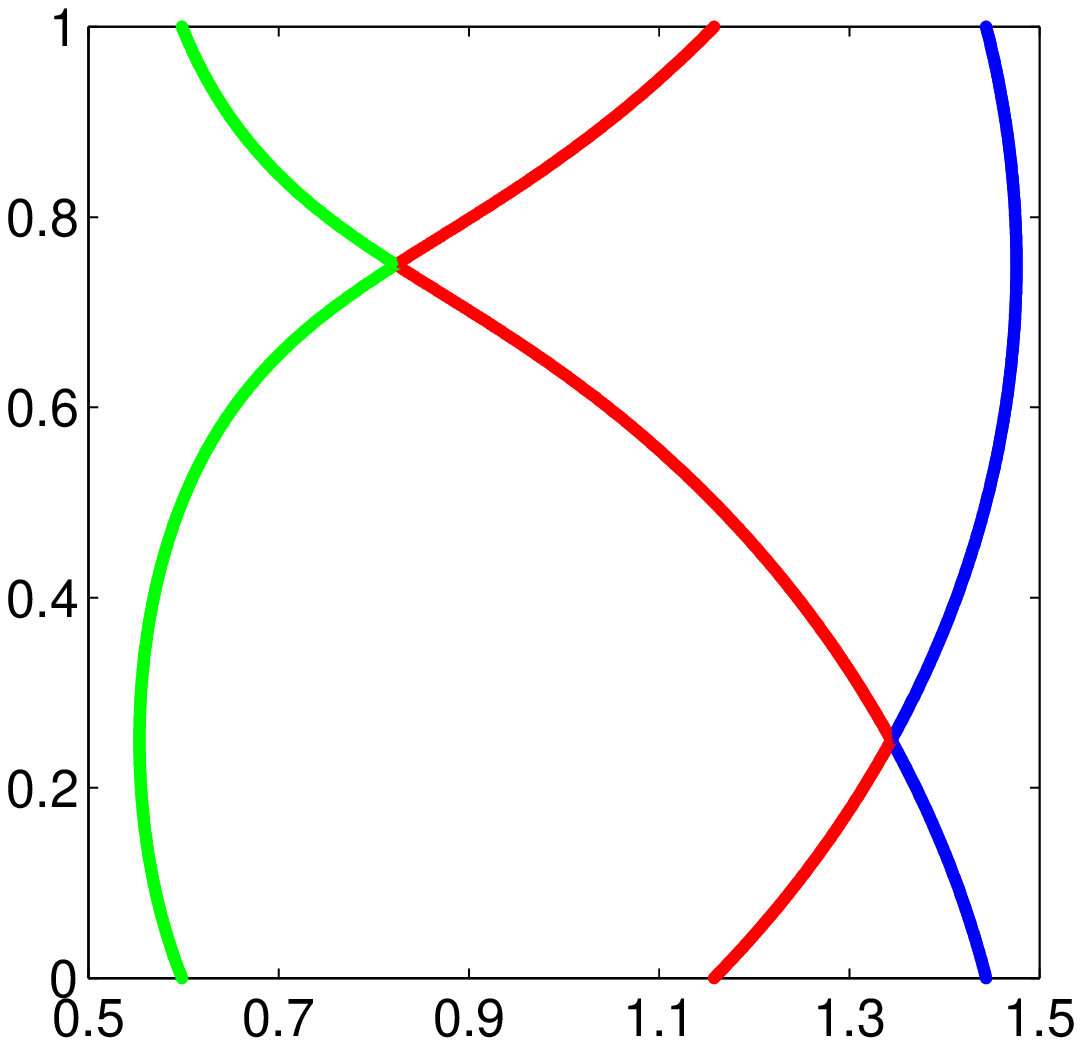}
	\includegraphics[width=0.32\linewidth]{./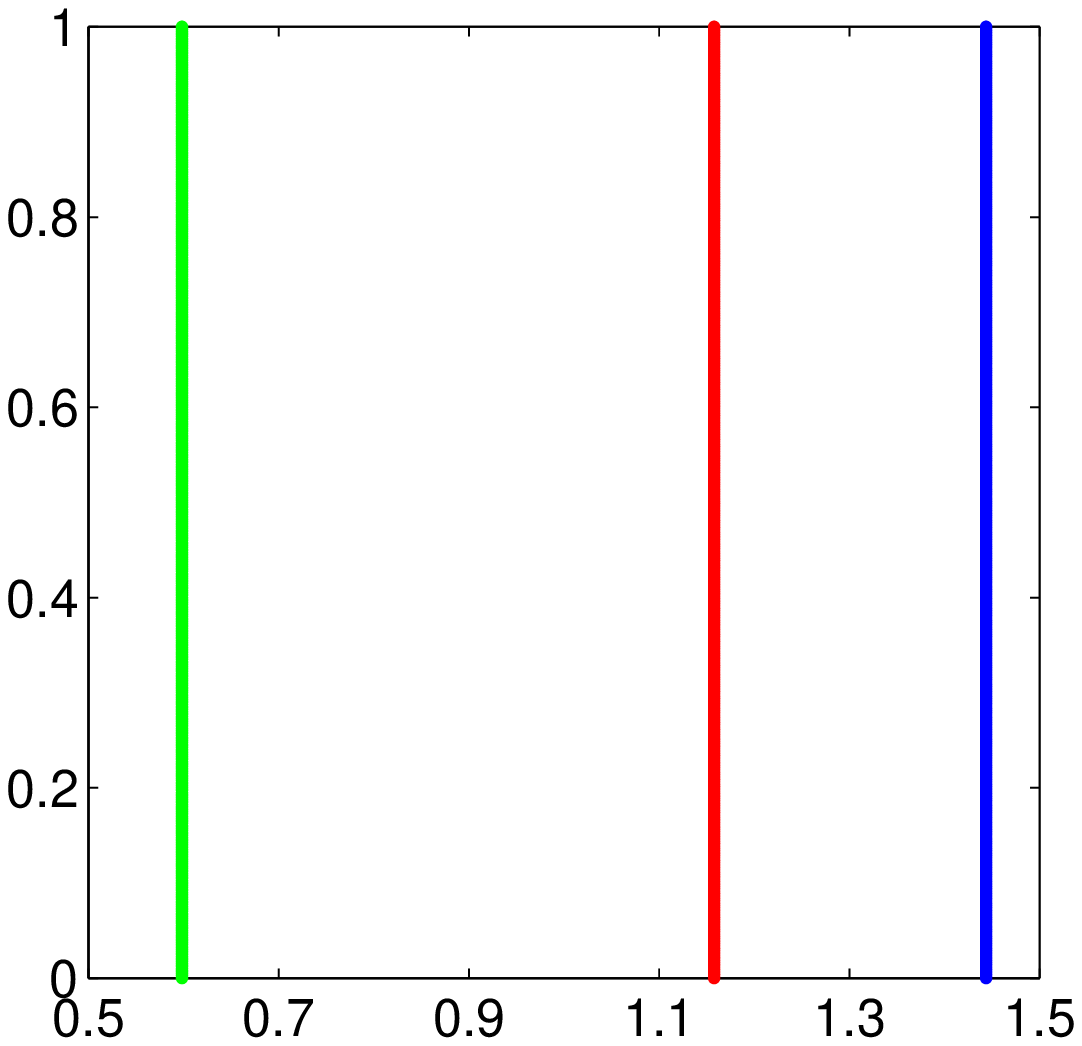}
\fi
	\caption{The norms of the roots of $f$ along the paths $\eta_1$, $\eta_2$ and $\gamma$ (from left to right). The vertical axis is $k$, the horizontal axis represents the norms of particular roots.}
	\label{Fig:ZeroPathes}
\end{figure}

First, we investigate the sets $U_j^A \subseteq T_A$ and their complements for $j \neq t$. Initially, we show that for these sets it suffices to investigate the situation of fixed $|p|$ and $|q|$.

\begin{lemma}
Every set $U_j^A \subseteq T_A$ with $j \in \{1,\ldots,s+t-1\} \setminus \{t\}$ and its complement $(U_j^A)^c$ can be deformation retracted to a subset $\wh U_j^A$ (respectively $(\wh U_j^A)^c$) of the standard torus $T_{(1,1)} = \{\lf(e^{i \cdot \arg(p)},e^{i \cdot \arg(q)}\ri) \ : \ p,q \in \C^*\} \subseteq T_A$. 
\label{Lemma:Deformation1Ualp}
\end{lemma}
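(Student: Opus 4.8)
The plan is to construct an explicit deformation retraction that shrinks the two ``radial'' degrees of freedom $|p|$ and $|q|$ down to $1$, while keeping the two arguments fixed, and to verify that this homotopy preserves membership in $U_j^A$ for $j \neq t$. The key structural fact I would exploit is Theorem~\ref{Thm:LocalStructureUalpha} together with Theorem~\ref{Thm:ComplementIn1Fan}: for $j \neq t$, whether $f = z^{s+t} + p z^t + q$ lies in $U_j^A$ depends \emph{only} on the arguments $\arg(p)$ and $\arg(q)$, and not on the norms $|p|$, $|q|$. Indeed, the characterization there is purely in terms of the condition $p \in F(s,t,q)^{\odd}$ or $p \in F(s,t,q)^{\even}$, and the union of rays $F(s,t,q)$ from~\eqref{Equ:RaysofFan} is invariant under scaling $|p| \mapsto \lambda |p|$ and under rescaling $|q|$ (which by the normalization step in the proof of Theorem~\ref{Thm:LocalStructureUalpha} only amounts to rescaling the norms of the roots, leaving the combinatorial type of their coincidences untouched). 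This scale-invariance is exactly what fails for $j = t$, where the extra lopsidedness condition of Theorem~\ref{Thm:LocalStructureUalpha} does involve $|p|$ and $|q|$ --- hence the exclusion $j \neq t$ in the hypothesis.

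Concretely, I would define the retraction $H : T_A \times [0,1] \to T_A$ by
\begin{eqnarray*}
 H\big((p,q),\lambda\big) & = & \Big( |p|^{1-\lambda} \cdot e^{i\arg(p)}, \ |q|^{1-\lambda} \cdot e^{i\arg(q)} \Big),
\end{eqnarray*}
so that $H(\cdot,0)$ is the identity and $H(\cdot,1)$ maps onto the standard torus $T_{(1,1)}$ of unimodular coefficients. Writing $\widehat U_j^A = U_j^A \cap T_{(1,1)}$, the map $H(\cdot,1)$ is the desired retraction onto $T_{(1,1)}$, and since each fiber $\lambda \mapsto H((p,q),\lambda)$ fixes both arguments, the scale-invariance established above guarantees $H((p,q),\lambda) \in U_j^A$ for all $\lambda$ whenever $(p,q) \in U_j^A$, and symmetrically for the complement. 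Thus $H$ restricts to a deformation retraction of $U_j^A$ onto $\widehat U_j^A$ and of $(U_j^A)^c$ onto $(\widehat U_j^A)^c$.

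The main step requiring care is the verification that $U_j^A$ is genuinely invariant under the radial flow $H$ --- i.e.\ that membership is a function of $(\arg(p), \arg(q))$ alone. For the argument-level characterization I would invoke Theorem~\ref{Thm:LocalStructureUalpha} directly: after the normalization $|q| = 1$, $q = 1$ used in its proof, the condition $f \in U_j^A$ reduced to $k = (s+l)/2 \in \Z$ (or $\notin \Z$), where $\arg(p) = l\pi/(s+t)$; this quantity manifestly does not see $|p|$. Undoing the normalization, the general statement is that for $j \neq t$ the locus $(U_j^A)^c$ inside each $\C$-slice $(T_A)_q$ is the \emph{full} sub-union $F^{\odd}(s,t,q)$ or $F^{\even}(s,t,q)$ of rays (unbounded, in contrast to the $j = t$ case of Corollary~\ref{Cor:LocalStructureUzero}, where it is truncated to a disk $B_r(0)$). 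Because these rays emanate from the origin and the homotopy $H$ moves points radially, a point on such a ray stays on the same ray throughout the flow, and a point off all the rays stays off them; this is precisely the invariance needed.

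The anticipated obstacle is handling the boundary behavior at $p = 0$ (and, in the augmented space, $q = 0$) and confirming continuity of $H$ there: the radial retraction $|p| \mapsto |p|^{1-\lambda}$ extends continuously by setting $H((0,q),\lambda) = (0, |q|^{1-\lambda}e^{i\arg(q)})$, and since $f_0 = z^{s+t} + q$ has all roots of equal norm, $f_0 \notin U_j^A$ for every $1 \le j \le s+t-1$, so $p=0$ sits in the complement and remains there under the flow --- consistent with the retraction. One should also confirm that the flow never crosses between $U_j^A$ and its complement along a trajectory, which is immediate from the ray-invariance above since the trajectory's argument-coordinates are constant. With these points checked, $H$ is a well-defined homotopy witnessing both deformation retractions simultaneously, which is exactly the assertion of the lemma.
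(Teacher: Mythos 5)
Your proposal is correct and takes essentially the same approach as the paper: the paper's proof likewise writes down an explicit radial homotopy fixing both arguments, namely $F((p,q),l) = \bigl(p/((1-l)+l\,|p|),\; q/((1-l)+l\,|q|)\bigr)$ (a cosmetic variant of your $p/|p|^{\lambda}$), and justifies that it restricts to deformation retractions of $U_j^A$ and $(U_j^A)^c$ via the same argument-only characterization from Theorem~\ref{Thm:LocalStructureUalpha} that you invoke. The only presentational difference is that the paper frames the homotopy as respecting the $\Arg$-fiber bundle structure of $T_A$ (each fiber is retracted to its intersection with $T_{(1,1)}$), a property your radial flow has implicitly and which the paper reuses later.
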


The idea of the lemma is that for containment in $U_j^A$ (with $j \neq t$) the norms of the coefficients $p$ and $q$ are irrelevant. Thus, we can deform the complete space to the standard torus where $p$ and $q$ have norm one. 

\begin{proof} Recall from \eqref{Equ:FiberBundleArgMap} in Section \ref{SubSec:Fibers} that $T_A \cong (\C^*)^2$ comes with a fiber bundle $\R^2 \ra T_A \ra (S^1)^2$ given by the $\Arg$ map. 

Let $h$ be the homeomorphism given by applying $\Log_\C$ on the coefficients of polynomials in $T_A$. Let $T_{(1,1)} = h^{-1}((S^1)^2 \times \{0\}) = \{\lf(e^{i \cdot \arg(p)},e^{i \cdot \arg(q)}\ri) \ : \ p,q \in \C^*\}$ be the canonical embedding of the standard torus in $T_A$ . We investigate the homotopy 
\begin{eqnarray}
	& & F: T_A \times [0,1] \ra T_A, \quad ((p,q),l) \mapsto \lf(\frac{p}{(1-l)+l \cdot |p|},\frac{q}{(1-l)+l \cdot |q|}\ri). \label{Equ:Homotopy}
\end{eqnarray}
Obviously, $F$ is the identity for $l=0$ and $F$ is the projection from $T_A\times [0,1]$ to $T_{(1,1)}$ for $l = 1$. Recall that by Theorem \ref{Thm:LocalStructureUalpha} $U_j^A$ is invariant under changing $|q|$ and, for $q \in \C^*$ fixed, it holds that $(p,q) \in U_j^A$ implies $(\lam p,q) \in U_j^A$ for every $\lam \in \R_{>0}$. Since, by construction, every $(e^{i \cdot \phi_1},e^{i \cdot \phi_2}) \in T_{(1,1)}$ satisfies
\begin{eqnarray*}
	F^{-1}((e^{i \cdot \phi_1},e^{i \cdot \phi_2})) \ \cong \ \{(p,q) \in (\C^*)^2 \ : \ \arg(p) = \phi_1, \arg(q) = \phi_2\} \ = \ \Arg^{-1}((\phi_1,\phi_2)),
\end{eqnarray*}
$F$ respects the fiber bundle structure of $T_A$ described above and hence $F_{|U_j^A}$ is indeed a deformation retraction of $U_j^A$ to a subset of $T_{(1,1)}$. For the complements of the $U_j^A$ the argument works the same way.
\end{proof}

In the following we say that two polynomials $f,g \in \C[z]$ are \textit{equivalent} if the complements of their amoebas have the same components (with respect to the order map), i.e.,
\begin{eqnarray}
	f \sim g	& \Longleftrightarrow	& f \in U_j^A \text{ if and only if } g \in U_j^A \text{ for all } j \in \{0,\ldots,s+t\}. \label{Equ:Equivalence}
\end{eqnarray}

\begin{lemma} 
Let $f = z^{s+t} + p z^{t} + q$ with $p,q \in \C^*$. Then $f \sim g$ for every $g$ on the path  $\gamma_{(p,q)}: [0,1] \ra T_A, \quad \phi \mapsto (p \cdot  e^{i \cdot 2 \pi s \phi / (s+t)},
q \cdot e^{i \cdot 2 \pi \phi})$. In particular, we have $f \sim g$ for polynomials $f$ and $g$ with coefficient vectors $(p,q)$ and $(p \cdot e^{i \cdot 2\pi s/(s+t)},q)$ located on the torus
\begin{eqnarray}
	T_{(|p|,|q|)} & = & \{(|p| e^{i \cdot 2 \pi \phi}, |q| e^{i \cdot 2 \pi \psi}) \ : \ \phi,\psi \in [0,1)\} \subseteq T_A.
\end{eqnarray}
\label{Lem:GluingalongS1}
\end{lemma}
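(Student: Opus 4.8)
The plan is to reduce the relation $\sim$ to an invariance of the multiset of root norms, and then to realize the path $\gamma_{(p,q)}$ by an explicit argument-substitution in the variable $z$ that rotates all roots without changing their absolute values.

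First I would record that, by the definition of the sets $U_j^A$ and of $\sim$ in \eqref{Equ:Equivalence}, membership of a trinomial in the various $U_j^A$ is completely determined by the sorted sequence of absolute values $|a_1| \le \cdots \le |a_{s+t}|$ of its roots, more precisely by the pattern of which consecutive norms coincide. Hence, to prove $f \sim g$ for $g = \gamma_{(p,q)}(\phi)$, it suffices to produce a norm-preserving bijection between $\cV(f)$ and $\cV(g)$.

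Next I would carry out the substitution $z = w \cdot e^{i\psi}$ in $f$. Since $f(w e^{i\psi}) = e^{i(s+t)\psi} w^{s+t} + p\, e^{it\psi} w^t + q$, dividing by the unit $e^{i(s+t)\psi}$ (which does not alter the zero set) yields the monic trinomial $w^{s+t} + p\, e^{-is\psi} w^t + q\, e^{-i(s+t)\psi}$, using $t - (s+t) = -s$. Thus the substitution sends the coefficient vector $(p,q)$ to $(p\, e^{-is\psi},\, q\, e^{-i(s+t)\psi})$ while merely rotating every root, namely $a \in \cV(f)$ corresponds to $a\, e^{-i\psi}$, so that all norms are preserved. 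Choosing $\psi = -2\pi\phi/(s+t)$ produces exactly the pair $(p\, e^{2\pi i s\phi/(s+t)},\, q\, e^{2\pi i \phi}) = \gamma_{(p,q)}(\phi)$, and the resulting trinomial is $g$. Therefore $g$ and $f$ have identical multisets of root norms, whence $g \sim f$ for every $\phi \in [0,1]$. Specializing to $\phi = 1$ gives $\gamma_{(p,q)}(1) = (p\, e^{2\pi i s/(s+t)},\, q)$, which lies on the torus $T_{(|p|,|q|)}$ together with $(p,q)$, establishing the ``in particular'' claim.

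I do not expect a serious obstacle: the only point requiring care is the bookkeeping of arguments modulo $2\pi$ and the sign and direction of the rotation $\psi$; note that coprimality of $s$ and $t$ plays no role in this step and enters only when one later uses this path to traverse the torus. As a consistency check, the substitution is precisely the ``$+$''-branch of Egerv\'{a}ry's equivalence in Theorem \ref{Thm:Egervary1}(1): a direct computation shows that the combination $-(s+t)\arg(p) + s\arg(q)$ is constant along $\gamma_{(p,q)}$, which is exactly the angle condition guaranteeing that the two trinomials have roots of the same norms.
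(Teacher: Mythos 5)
Your proof is correct, but it takes a genuinely different route from the paper's. The paper derives the lemma from Theorem \ref{Thm:LocalStructureUalpha}: it observes that as $\arg(q)$ advances by $2\pi\phi$, the ray arrangement $F(s,t,q)$ rotates accordingly, so $p$ lies on a ray of $F(s,t,q)$ if and only if the rotated $p$ lies on the corresponding ray of $F(s,t,q\cdot e^{i2\pi\phi})$; this settles membership in $U_j^A$ for $j\neq t$, and the case $j=t$ then requires a separate argument, namely that lopsidedness depends only on the norms $|p|,|q|$, which are constant along $\gamma_{(p,q)}$. Your substitution $z = w e^{i\psi}$ with $\psi = -2\pi\phi/(s+t)$ bypasses all of this: after dividing by the unit $e^{i(s+t)\psi}$ you obtain exactly the coefficient vector $\gamma_{(p,q)}(\phi)$, and every root is merely rotated, so the entire multiset of root norms --- indeed the amoeba $\cA(f)$ itself --- is constant along the path. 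This is both more elementary (no appeal to Theorem \ref{Thm:LocalStructureUalpha} or to lopsidedness invariance) and strictly stronger than what the paper's proof extracts, since it handles $j=t$ on the same footing as all other $j$ and shows the norms are literally preserved, not just their coincidence pattern; you are also right that coprimality of $s$ and $t$ is irrelevant at this step. What the paper's route buys instead is an explicit link between the path $\gamma_{(p,q)}$ and the ray structure $F(s,t,q)$, which is the picture reused downstream (e.g.\ in Lemma \ref{Lemma:Deformation2Ualp} when identifying $(\wh U_j^A)^c$ with the curves $\rho_{(\cdot,\cdot)}$); your argument, by contrast, is essentially the coefficientwise torus action that the paper only invokes informally near \eqref{eq:groupaction}. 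One cosmetic caveat: your closing remark identifying the substitution with the ``$+$''-branch of Egerv\'{a}ry's Theorem \ref{Thm:Egervary1}(1) is a sign-convention matter (the relevant branch is the one without complex conjugation, and the invariant combination along $\gamma_{(p,q)}$ matches the difference of arguments), but since this is only a consistency check it does not affect the proof.
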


\begin{proof}
Let $f = \gamma_{(p,q)}(0)$ and $g = \gamma_{(p,q)}(\phi)$ for some $\phi \in (0,1]$. By Definition \eqref{Equ:RaysofFan}, $f$ is located on a particular ray of $F(s,t,q)$ if and only if $g$ is located on the corresponding ray of $F(s,t,q \cdot e^{i \cdot 2\pi \phi})$. 
Thus, by Theorem \ref{Thm:LocalStructureUalpha}, $f \in U_j^A$ if and only if $g \in U_j^A$ for all $j \in \{0,\ldots,s+t\}$. Note particularly that the equivalence also holds for $j = t$ since the coefficients of all trinomials on $T_{(|p|,|q|)}$ have the same norm and lopsidedness is either given for every point on a torus $T_{(|p|,|q|)}$ or for none (see also \cite[Proposition 5.2]{Theobald:deWolff:Genus1} or \cite[Proposition 4.14]{deWolff:Diss}). Hence, by \eqref{Equ:Equivalence}, we have $f \sim g$. Since $\phi$ was arbitrarily chosen, the statement follows.
\end{proof}

By considering the union of rays~\eqref{Equ:RaysofFan} for varying $\arg(q)$ we
make the transition from the sliced version of $U_j^A$ to the
global version. In the following, we provide an explicit parameterization
of the torus version $\wh{U}_j^A$ of $U_j^A$.

Let $\Z_m = \Z / m \Z$ for $m \in \N^*$. Note that since $\gcd(s,t) = 1$ we have $2\pi \cdot ks / (s+t) \equiv 0 \mod 2 \pi$ if and only if $k \in (s+t)\Z$. And since $(p,q) \sim (p \cdot e^{i \cdot 2 \pi s/(s+t)},q)$, every $U_j^A$ and $(U_j^A)^c$ with $j \in \{1,\ldots,s+t-1\} \setminus \{t\}$ is invariant under the group $\Z_{s+t}$ acting on $T_{(|p|,|q|)}$ by
\begin{eqnarray}
  \label{eq:groupaction}
	& & \ast: \Z_{s+t} \times T_{(|p|,|q|)} \ \ra \ T_{(|p|,|q|)}, \label{Equ:GroupAction} \\
	& & (k,(|p| \cdot e^{i \cdot 2 \pi \phi},|q| \cdot e^{i \cdot 2\pi \psi})) \ \mapsto \ (|p| \cdot e^{i \cdot 2 \pi (\phi + ks/(s+t))},|q| \cdot e^{i \cdot 2\pi \psi}). \nonumber
\end{eqnarray}
For $(p,q) \in T_{(|p|,|q|)}$ and $k \in \Z_{s+t}$ we denote the image of the group action as $k \ast (p,q)$. Note that this group action is conformal with the regular torus action of $T_A \cong (\C^*)^2$ on itself.

Let $\gamma_{(p,q)}$ be a path on $T_{(|p|,|q|)}$ as defined in Lemma \ref{Lem:GluingalongS1} and $k \ast (p,q)$ denote the image under the group action of $\Z_{s+t}$ on $T_{(|p|,|q|)}$ introduced in~\eqref{eq:groupaction}. Note that $\gamma_{(p,q)}$ has startpoint $(p,q)$ and endpoint $(p \cdot e^{i \cdot 2\pi \cdot s/(s+t)},q)$. Thus, for every $k \in \N$ the endpoint of $\gamma_{k \ast (p,q)}$ is the starting point of $\gamma_{(k+1) \ast (p,q)}$. 

Let $\rho_{(p,q)}$ denote the path on the torus $T_{(|p|,|q|)}$ given by
\begin{eqnarray}
	\rho_{(p,q)} & = & \gamma_{(s+t-1) \ast (p,q)} \circ \gamma_{(s+t-2) \ast (p,q)} \circ \cdots \circ \gamma_{1 \ast (p,q)} \circ \gamma_{0 \ast (p,q)} \label{Equ:PathRho}
\end{eqnarray}
for some $(p,q) \in T_{(|p|,|q|)} \subseteq T_A$. 

To denote paths $\rho_{(p,q)}$ with $|p| = |q| = 1$ on the standard torus $T_{(1,1)} = \{\lf(e^{i \cdot \arg(p)},e^{i \cdot \arg(q)}\ri) \ : \ p,q \in \C^*\} \subseteq T_A$ we also write $\rho_{(\arg(p),\arg(q))}$ with slight abuse of notation.

We observe that in case of $s+j$ even the curve
$\rho(0,0)$ parameterizes the set $(\wh{U}_j^A)^c$, and in case of
$s+j$ odd the curve $\rho(\pi/(s+t),0)$ parameterizes $(\wh{U}_j^A)^c$.

Note that $\rho_{(\arg(p),\arg(q))}$ is closed and not contractable on $T_{(1,1)}$ by construction. See Figure \ref{Fig:Torus} for a visualization. But it has an even stronger, well-known structure, as we show in the following corollary.

\begin{figure}[ht]
\ifpictures
	\includegraphics[width=0.6\linewidth]{./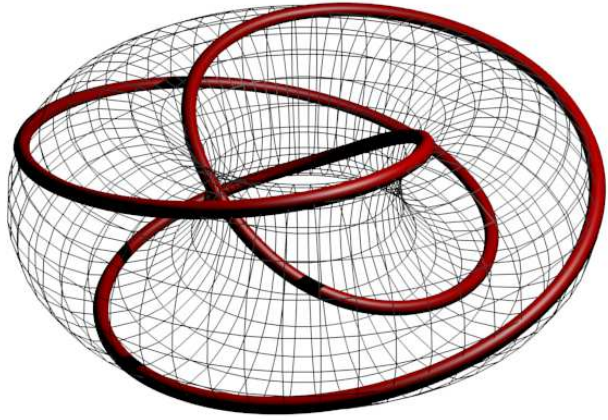}
\fi
	\caption{The curve $\rho(0,0)$ for $s = 2$ and  $s+t = 3$ on a torus. By Corollary \ref{Cor:TorusKnot} it corresponds to the $K(3,2)$ -- the trefoil knot.}
	\label{Fig:Torus}
\end{figure}

\begin{cor}
Every path $\rho_{(\arg(p),\arg(q))}$ is homeomorphic to the torus knot $K(s+t,s)$.
\label{Cor:TorusKnot}
\end{cor}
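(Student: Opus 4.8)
The plan is to put $\rho_{(\arg(p),\arg(q))}$ into an explicit closed form as a linear winding on the standard torus $T_{(1,1)} \cong S^1 \times S^1$, and then to recognize that closed form as the standard parametrization of the torus knot $K(s+t,s)$. The conceptual content is entirely in the bookkeeping of the two arguments along the concatenation~\eqref{Equ:PathRho}; once the winding numbers are exposed, coprimality of $s$ and $t$ does the rest.

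First I would reparametrize~\eqref{Equ:PathRho} by a single global parameter. On the $k$-th segment $\gamma_{k \ast (p,q)}$ (with $\phi \in [0,1]$), the first coordinate has argument $\arg(p) + 2\pi k s/(s+t) + 2\pi s \phi/(s+t)$ and the second has argument $\arg(q) + 2\pi \phi$. Setting $w = (k+\phi)/(s+t) \in [0,1]$, a direct substitution shows that the first argument becomes $\arg(p) + 2\pi s w$, while the second, modulo $2\pi$, becomes $\arg(q) + 2\pi(s+t)w$: the integer jumps $-2\pi k$ occurring between consecutive segments are invisible on $S^1$, so the sawtooth in the second coordinate glues into a genuine linear winding on the torus. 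Thus the plan predicts
\begin{equation*}
  \rho_{(\arg(p),\arg(q))}(w) \ = \ \left( e^{i(\arg(p) + 2\pi s w)}, \ e^{i(\arg(q) + 2\pi (s+t) w)} \right), \quad w \in [0,1] \, .
\end{equation*}
From this I would read off immediately that $\rho$ winds $s$ times in the first $S^1$-factor and $s+t$ times in the second, i.e.\ its class in $\pi_1(T_{(1,1)}) \cong \Z^2$ is $(s,s+t)$; closedness is re-confirmed since $w=1$ returns both coordinates to their $w=0$ values, in accordance with $(s+t) \ast (p,q) = (p,q)$. Since the constant shift $(\arg(p),\arg(q))$ is a rotation, hence a homeomorphism of the torus, it may be dropped without affecting the homeomorphism type of the image, reducing matters to the standard $(s,s+t)$-curve $w \mapsto (e^{2\pi i s w}, e^{2\pi i (s+t)w})$.

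The hard part — really the only point needing care — will be to verify that $\rho$ is \emph{embedded}, so that it is a knot rather than merely an immersed closed curve. This is exactly where the hypothesis $\gcd(s,t)=1$ (equivalently $\gcd(s,s+t)=1$) enters: if $w_1, w_2 \in [0,1)$ give the same point, then $s(w_1-w_2) \in \Z$ and $(s+t)(w_1-w_2) \in \Z$, whence $t(w_1-w_2) \in \Z$, and a Bézout identity $a s + b t = 1$ forces $w_1 - w_2 \in \Z$, so $w_1 = w_2$. Injectivity of a continuous map from the compact circle $[0,1]/{\sim}$ into the Hausdorff torus then yields an embedding, so the image is a simple closed curve of slope $(s,s+t)$ on $T_{(1,1)}$. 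By the standard definition this is precisely the torus knot $K(s+t,s)$ (recalling $K(a,b)=K(b,a)$ as unoriented knots), which completes the identification and hence the corollary.
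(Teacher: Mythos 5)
Your proposal is correct and follows essentially the same route as the paper: the paper's proof likewise asserts that $\rho_{(0,0)}$ is, by construction, the linear winding $\phi \mapsto (e^{i s \phi \cdot 2\pi}, e^{i (s+t) \phi \cdot 2\pi})$ on $(S^1)^2$ and identifies it as $K(s+t,s)$ by its intersection numbers with the meridians, citing standard references. You merely make explicit two steps the paper leaves implicit — the reparametrization gluing the $s+t$ concatenated segments of~\eqref{Equ:PathRho} into that single winding, and the injectivity argument via $\gcd(s,s+t)=1$ showing the curve is embedded — which is a welcome tightening but not a different proof.
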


\begin{proof}
By construction, every $\rho_{(\arg(p),\arg(q))}$ is homeomorphic to $\rho_{(0,0)}$. $\rho_{(0,0)}$ is given by $S^1 \to (S^1)^2$, $\phi \mapsto (e^{i s \phi \cdot 2\pi}, e^{i (s+t) \phi \cdot 2\pi})$. This is a closed curve on $(S^1)^2$ such that the meridians are intersected $(s+t)$ respectively $s$ times.
Thus, it is the torus knot $K(s+t,s)$ (see \cite[p. 46 et seq.]{Burde:Zieschang}, see also \cite{Hatcher}).
\end{proof}

With the construction of $\rho_{(\arg(p),\arg(q))}$ we can describe the sets $\wh U_j^A$ and its complements on the standard torus $T_{(1,1)}$.

\begin{lemma}
Let $j \in \{1,\ldots,s+t-1\} \setminus \{t\}$. Then
\begin{equation*}
\begin{array}{ll}
 \text{\emph{For} } s+j \text{\emph{ even we have:}} & 
	\rho_{(0,0)} = (\wh U_j^A)^c \text{ and } \rho_{(\pi/(s+t),0)} \text{ is a deformation retract of } \wh U_j^A. \\
 \text{\emph{For} } s+j \text{\emph{ odd we have:}} & 
\rho_{(\pi/(s+t),0)} = 
(\wh U_j^A)^c \text{ and } \rho_{(0,0)} \text{ is a deformation retract of } \wh U_j^A.
\end{array}
\end{equation*}
 \label{Lemma:Deformation2Ualp}
\end{lemma}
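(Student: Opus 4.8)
The plan is to transport everything to the standard torus, recognize $(\wh U_j^A)^c$ as a single level set of one linear functional, and exhibit its complement as an annulus that retracts onto a parallel level set. Fix coordinates $(\alpha,\beta)=(\arg(p),\arg(q))$ on $T_{(1,1)}$. By Lemma~\ref{Lemma:Deformation1Ualp} it suffices to work on $T_{(1,1)}$, and since $F(s,t,q)$ is a union of rays through the origin, membership of a unit-modulus $p$ in $F^{\even}$ or $F^{\odd}$ depends only on $\alpha$ and $\beta$. Theorem~\ref{Thm:LocalStructureUalpha} then gives $(\wh U_j^A)^c=\{p\in F^{\even}\}$ for $s+j$ even and $(\wh U_j^A)^c=\{p\in F^{\odd}\}$ for $s+j$ odd. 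Writing out Definition~\eqref{Equ:RaysofFan} with $k=2m$ (resp.\ $k=2m+1$), these loci become the linear congruences $(s+t)\alpha-s\beta\equiv 0\pmod{2\pi}$ (resp.\ $\equiv\pi\pmod{2\pi}$).

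Next I would identify these congruence loci with the curves $\rho$. From the proof of Corollary~\ref{Cor:TorusKnot}, $\rho_{(0,0)}$ is the image of $\phi\mapsto(e^{i\cdot 2\pi s\phi},e^{i\cdot 2\pi(s+t)\phi})$; one checks directly that $(s+t)(s\phi)-s((s+t)\phi)=0$, so this one-parameter subgroup lies in the kernel of the character $(\alpha,\beta)\mapsto(s+t)\alpha-s\beta$. Because $\gcd(s+t,s)=\gcd(s,t)=1$ this character is primitive and its kernel is the connected one-parameter subgroup, which therefore equals $\rho_{(0,0)}=\{(s+t)\alpha-s\beta\equiv 0\pmod{2\pi}\}$. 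Translating the base point by $(\pi/(s+t),0)$ shifts the value of $(s+t)\alpha-s\beta$ by $\pi$, so $\rho_{(\pi/(s+t),0)}=\{(s+t)\alpha-s\beta\equiv\pi\pmod{2\pi}\}$. This yields the two claimed set equalities: $\rho_{(0,0)}=(\wh U_j^A)^c$ for $s+j$ even, and $\rho_{(\pi/(s+t),0)}=(\wh U_j^A)^c$ for $s+j$ odd.

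For the deformation retraction I would view $u:=(s+t)\alpha-s\beta\bmod 2\pi$ as one angular coordinate of a unimodular change of basis: using $\gcd(s+t,s)=1$, pick integers $x,y$ with $(s+t)y+sx=1$ and set $w:=x\alpha+y\beta\bmod 2\pi$, so that $(\alpha,\beta)\mapsto(u,w)$ is a diffeomorphism of $(\R/2\pi\Z)^2$. In these coordinates the level sets of $u$ foliate the torus by parallel copies of $K(s+t,s)$, and $\wh U_j^A$ is the open annulus $\{u\neq 0\}\cong S^1_w\times(0,2\pi)_u$ (resp.\ $\{u\neq\pi\}$). The homotopy that fixes $w$ and drives $u$ linearly to $\pi$ (resp.\ to $0$), namely $u\mapsto(1-\tau)u+\tau\pi$, keeps $u$ inside $(0,2\pi)$ and hence stays in $\wh U_j^A$; at $\tau=1$ it lands on the core circle $\{u=\pi\}=\rho_{(\pi/(s+t),0)}$ (resp.\ $\{u=0\}=\rho_{(0,0)}$). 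Thus $\wh U_j^A$ deformation retracts onto the stated curve, and the $s+j$ odd case follows verbatim after exchanging the roles of $0$ and $\pi$.

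The step I expect to require the most care is the coordinate change: I must verify that $(u,w)$ is genuinely unimodular, so that it descends to a diffeomorphism of the torus, and that the representative $u\in(0,2\pi)$ can be chosen continuously on $\wh U_j^A$, which is precisely where removing the single level set is used. Once the annulus structure is established the retraction is routine; the remaining subtlety is only the bookkeeping of mod-$2\pi$ representatives in the set equalities of the second step.
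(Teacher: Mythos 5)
Your proof is correct, and it reaches the two assertions by a route that differs from the paper's in a meaningful way. For the identification of $(\wh U_j^A)^c$, both arguments start from Theorem~\ref{Thm:LocalStructureUalpha} and the congruence description of $F^{\even}$ and $F^{\odd}$ on $T_{(1,1)}$; but where the paper settles the equality with $\rho_{(0,0)}$ resp.\ $\rho_{(\pi/(s+t),0)}$ with the phrase ``by definition of $\rho$'', you actually prove it, observing that $\rho_{(0,0)}$ is a circle subgroup contained in the kernel of the character $(\alpha,\beta)\mapsto(s+t)\alpha-s\beta$, which is connected precisely because $\gcd(s+t,s)=\gcd(s,t)=1$, so the two circles coincide --- this is exactly where coprimality enters, i.e.\ where the concatenated path $\rho$ is seen to sweep out the entire congruence locus, and your argument makes that explicit. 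For the deformation retraction the approaches genuinely diverge: the paper stays in the coordinates $(\arg(p),\arg(q))$ and contracts, slicewise in $\arg(q)$, each of the $s+t$ open arcs of the fiber circle to its midpoint via the explicit homotopy $\wh F$ built from $\arg(p)\bmod\frac{2\pi}{s+t}$, whereas you first straighten the knot by the unimodular change of coordinates $(u,w)$ (B\'ezout, again using $\gcd(s+t,s)=1$) and then retract the standard annulus $\{u\neq 0\}\cong S^1\times(0,2\pi)$ linearly onto its core circle $\{u=\pi\}$. Your version buys a cleaner global picture: the annulus structure of $\wh U_j^A$ appears in one stroke (incidentally yielding $\pi_1(\wh U_j^A)=\Z$ for free), the homotopy is manifestly continuous by the convexity argument you give, and you avoid the midpoint bookkeeping, which in the paper's formula is only written out cleanly for $\arg(q)=0$ (for general $\arg(q)$ the offset should be taken relative to $\frac{s\arg(q)}{s+t}$). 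What the paper's slicewise homotopy buys in exchange is visible compatibility with the $\Arg$-fibration viewpoint used throughout Section~\ref{Sec:TrinomialsTopology}. The subtleties you flagged --- unimodularity of $(u,w)$ and the continuous choice of the representative $u\in(0,2\pi)$ on $\{u\neq 0\}$ --- are exactly the right ones, and both are handled correctly in your argument.
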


\begin{proof} We consider the case $s + j$ be even and $j \neq t$. Recall that by Lemma \ref{Lemma:Deformation1Ualp}, $\wh U_j^A$ is the deformation retract of $U_j^A$ to a subset of the standard torus $T_{(1,1)}$. Hence, it suffices to show that a given point on the standard torus
$T_{(1,1)}$ belongs to $(U_j^A)^c$ if and only if it is is located on $\rho_{(0,0)}$. 
By Theorem \ref{Thm:LocalStructureUalpha}, $f = z^{s+t} + pz^t + q$ does not belong to $U_j^A$ if and only if $p \in F(s,t,q)^{\even}$. And it follows from \eqref{Equ:RaysofFan} that
if $f$ is additionally in $T_{(1,1)}$, i.e., $|p| = |q| = 1$, then $f \in U_j^A$ if and only if $\arg(p) \neq (\arg(q)s + 2\pi k)/(s+t)$ for $k \in \{1,\ldots,s+t\}$. By definition of $\rho_{(\arg(p),\arg(q))}$ these are exactly the points on $\rho_{(0,0)} \subseteq T_{(1,1)}$.

Now, we investigate $\wh U_j^A = T_{(1,1)} \setminus (\wh U_j^A)^c = T_{(1,1)} \setminus \rho{(0,0)}$. Since $\rho_{(\pi/(s+t),0)}$ is obtained from $\rho_{(0,0)}$ by the translation $(\arg(p),\arg(q)) \mapsto (\arg(p) + \pi/(s+t),\arg(q))$, we have $\rho_{(\pi/(s+t),0)} \subseteq \wh U_j^A$. We investigate the homotopy
\begin{eqnarray*}
\wh F: T_{(1,1)} \times [0,1] & \ra & T_{(1,1)},\\
((\arg(p),\arg(q)),l) & \mapsto & \lf(\arg(p) + l \cdot \lf(\frac{\arg(q)s + \pi}{s+t} - \lf(\arg(p) \mod \frac{2\pi}{s+t} \ri)\ri),\arg(q)\ri).	
\end{eqnarray*}

Obviously, we have $\wh F(\wh U_j^A,0) = \wh U_j^A$ and since $(\arg(p),\arg(q)) \in \rho_{(\pi/(s+t),0)} \Lera \arg(p) = (\arg(q)s + (1+2k)\pi)/(s+t)$ for $k \in \{1,\ldots,s+t\}$ we have $\wh F(\wh U_j^A,1) = \rho_{(\pi/(s+t),0)}$ (see Figure \ref{Fig:HomotopyTrinomials}).

Since $\wh F$ is continuous in $\arg(q)$ and the second coordinate of the image is independent of $l$, it suffices to prove the homotopy for the first image coordinate for an arbitrary, fixed $\arg(q)$. For a fixed $\arg(q)$ the set $\wh U_j^A$ is given by all $\arg(p) \neq (\arg(q)s + 2\pi k)/(s+t)$ for $k \in \{1,\ldots,s+t\}$. Thus, it consists of $s+t$ separated, open segments with midpoints $(\arg(q)s + (1+2k)\pi)/(s+t)$, where $k \in \{0,\ldots,s+t-1\}$. Each segment is contracted to its midpoint by $\wh F$ and hence $\wh F$ indeed is a deformation retraction of $\wh U_j^A$ to $\rho_{(\pi/(s+t),0)}$. For $s + j$ odd with $j \neq t$ the proof works analogously.
\end{proof}

\begin{figure}[ht]
\ifpictures
\includegraphics[width=0.25\linewidth]{./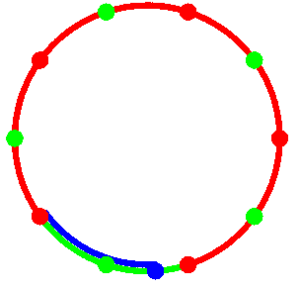}
\fi
	\caption{Situation for $s+t = 5$ and $\arg(q) = 0$. For a fixed $\arg(q)$, the set $\wh U_j^A$ is the union of $s+t$ open segments between the red (dark) points (one is exemplarily depicted in green (light) color here). Each of the segments is retracted to their green (light) midpoint under $\wh F$. For a point $\arg(p)$ (the blue (very dark) point here), the corresponding value $\arg(p) \mod \frac{2\pi}{s+t}$ is the length of the blue (very dark) segment. Thus, indeed, $\wh F(\wh U_j^A,1) = \rho_{(\pi/(s+t),0)}$.}
	\label{Fig:HomotopyTrinomials}
\end{figure}

Now we have all tools to prove the first main theorem of this section, which describes the topology of the sets $U_j^A$ for all $j \neq t$ and their complements.

\begin{thm}
Let $A = \{0,t,s+t\}$. For each $j \in \{1,\ldots,s+t-1\} \setminus \{t\}$ both $U_j^A \subseteq T_A$ and $(U_j^A)^c \subseteq T_A$ are isotopic to the torus knot $K(s+t,s)$. Hence, $U_j^A$ and $(U_j^A)^c$ are connected but not simply connected and we have $\pi_1(U_j^A) = \pi_1((U_j^A)^c) = \Z$.
\label{Thm:TopologyTrinomials}
\end{thm}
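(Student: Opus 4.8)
The plan is to prove the theorem purely by assembling the preceding lemmas: each of $U_j^A$ and $(U_j^A)^c$ will be deformation retracted, in two stages, onto an explicit closed curve on the standard torus that Corollary \ref{Cor:TorusKnot} already identifies with $K(s+t,s)$.

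First I would apply Lemma \ref{Lemma:Deformation1Ualp} simultaneously to $U_j^A$ and to its complement. This collapses the two norm directions $|p|,|q|$ (which are irrelevant to membership in $U_j^A$ when $j\neq t$, by Theorem \ref{Thm:LocalStructureUalpha}) and produces deformation retractions of $U_j^A$ onto $\wh U_j^A\subseteq T_{(1,1)}$ and of $(U_j^A)^c$ onto $(\wh U_j^A)^c\subseteq T_{(1,1)}$. It therefore suffices to determine the homotopy type of these two subsets of the standard torus.

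Next I would invoke Lemma \ref{Lemma:Deformation2Ualp}, distinguishing the parity of $s+j$. When $s+j$ is even, $(\wh U_j^A)^c$ equals the curve $\rho_{(0,0)}$ outright, while $\wh U_j^A$ deformation retracts onto $\rho_{(\pi/(s+t),0)}$; when $s+j$ is odd the two roles are exchanged. In either case, composing with the first stage shows that both $U_j^A$ and $(U_j^A)^c$ deformation retract onto a curve of the form $\rho_{(\arg(p),\arg(q))}$, which by Corollary \ref{Cor:TorusKnot} is homeomorphic to the torus knot $K(s+t,s)$, hence to $S^1$. Since a deformation retraction is a homotopy equivalence, I would then conclude that $U_j^A$ and $(U_j^A)^c$ are path-connected and have $\pi_1\cong\pi_1(S^1)=\Z$; in particular neither is simply connected.

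I do not expect a serious obstacle in the theorem itself, since the analytic content has been front-loaded into the lemmas; the conceptually interesting feature is that along each individual $\C$-slice the set $U_j^A$ is \emph{disconnected} (by Theorem \ref{Thm:LocalStructureUalpha}), yet the global set is connected. The mechanism restoring connectedness is precisely the gluing of these slices via the $\Z_{s+t}$-action, encoded in the path $\rho$ built from the equivalences of Lemma \ref{Lem:GluingalongS1}, which winds the separated ray-components into a single closed strand. The only points needing care are that the target of the first retraction is exactly the domain treated by the second, and that ``isotopic'' in the statement should be read as ``deformation retracts onto the embedded knot'', so that the equality of fundamental groups follows from homotopy equivalence and not from an ambient isotopy of $T_A$.
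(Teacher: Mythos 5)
Your proposal is correct and follows essentially the same route as the paper's own proof, which likewise assembles Lemma \ref{Lemma:Deformation1Ualp}, Lemma \ref{Lemma:Deformation2Ualp} and Corollary \ref{Cor:TorusKnot} to retract both $U_j^A$ and $(U_j^A)^c$ onto the curves $\rho_{(0,0)}$ and $\rho_{(\pi/(s+t),0)}$ and then reads off $\pi_1 = \Z$. Your closing remark that ``isotopic'' should be understood as ``deformation retracts onto the embedded knot'' is a fair and accurate gloss on the paper's phrasing.
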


\begin{proof}
By Lemma \ref{Lemma:Deformation1Ualp} and \ref{Lemma:Deformation2Ualp}, $U_j^A$ and $(U_j^A)^c$ can be deformation retracted to the closed paths $\rho_{(0,0)}$ and $\rho_{(\pi/(s+t),0)}$ on the standard torus $T_{(1,1)}$. By Corollary \ref{Cor:TorusKnot} both $\rho_{(0,0)}$ and $\rho_{(\pi/(s+t),0)}$ are homeomorphic to $K(s+t,s)$. Since a torus knot $K(s+t,s)$ is an embedding $S^1 \to S^3$
we have in particular $\pi_1(K(s+t,s)) = \Z$ (see \cite{Burde:Zieschang,Hatcher} for further details) and the statement follows.
\end{proof}

Note that for $s=1$ the torus knot $K(s+t,s)$ is a trivial knot. Therefore, e.g., the cubics $x^3+px^2+q$ and $x^3+px+q$ result topologically in non homotopic sets $U_1^A$ and $U_2^A$ although from an algebraic point of view this difference would not be expected a priori. Namely, $U_1^A$ of $x^3+px^2+q$ is isotopic to the trivial knot and $U_2^A$ of $x^3+px+q$ is isotopic to the trefoil knot.

\begin{figure}[ht]
\ifpictures
	\includegraphics[width=0.525\linewidth]{./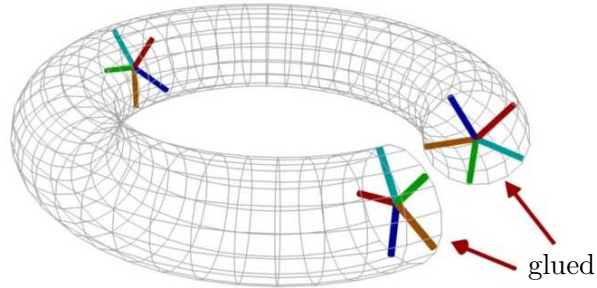}
	\put(-22,16){\small{\mbox{glued}}}
\fi
	\caption{The set $\{f = z^5 + p z^3 + e^{i \cdot \arg(q)} \ : \ p \in \C, |p| \leq 1, \arg(q) \in [0,2\pi)\}$ in the corresponding subset (a real full torus) of its parameter space (considered as $(p,q) \in \C \times \C$). Note that we need restrict to $|p| > 0$ if we want to investigate sets in $T_A$.}
	\label{Fig:TrinomialTopology}
\end{figure}

Finally, we describe the topology of $U_t^A$, its complement and the topology of the discriminant for $A = \{0,t,s+t\}$. We need the following well-known fact; see for example \cite[Exercise 13, Page 39]{Hatcher}.

\begin{lemma}
Let $X$ be a topological space with a path connected subspace $A$ such that $x_0 \in A$. Then the map $\pi_1(A,x_0) \to \pi_1(X,x_0)$ induced by the inclusion $A \hookrightarrow X$ is surjective if and only if every path in $X$ with endpoints in $A$ is homotopic to a path in $A$.
\label{Lem:FundamentalGroupSubspace}
\end{lemma}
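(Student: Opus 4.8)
The plan is to prove both implications directly by explicit path manipulation, using only the path-connectedness of $A$, and throughout interpreting ``homotopic'' as homotopic relative to endpoints. The whole argument is bookkeeping with concatenations, so the main work is to keep the basepoints straight; I write $i_* \colon \pi_1(A,x_0) \to \pi_1(X,x_0)$ for the map induced by the inclusion.

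For the forward implication I would assume $i_*$ is surjective and take an arbitrary path $\gamma \colon [0,1] \to X$ with $\gamma(0) = a_0$ and $\gamma(1) = a_1$ lying in $A$. First, using that $A$ is path-connected, I choose paths $\alpha$ in $A$ from $x_0$ to $a_0$ and $\beta$ in $A$ from $a_1$ to $x_0$. Then $\ell := \alpha \cdot \gamma \cdot \beta$ is a loop in $X$ based at $x_0$, so by surjectivity of $i_*$ its class equals $i_*[\delta]$ for some loop $\delta$ contained in $A$; equivalently, $\ell$ is homotopic rel $x_0$ to $\delta$. Prepending $\overline{\alpha}$ and appending $\overline{\beta}$ (both paths in $A$) and cancelling, I obtain $\gamma \simeq \overline{\alpha} \cdot \delta \cdot \overline{\beta}$ rel endpoints, and the right-hand side is a path entirely in $A$, as required.

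For the reverse implication I would assume every path in $X$ with endpoints in $A$ is homotopic rel endpoints to a path in $A$, and take an arbitrary class $[\gamma] \in \pi_1(X,x_0)$, represented by a loop $\gamma$ based at $x_0 \in A$. By hypothesis $\gamma$ is homotopic rel endpoints to a path $\delta$ in $A$. Since the homotopy fixes both endpoints and $\gamma$ is a loop at $x_0$, the path $\delta$ again has both endpoints at $x_0$, hence is a loop in $A$ based at $x_0$. Therefore $[\gamma] = i_*[\delta]$ lies in the image of $i_*$, proving surjectivity.

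There is no deep obstacle here; the one point that requires genuine care — and the likeliest source of error — is the consistent use of ``homotopic'' in the sense of \emph{rel endpoints}. In the forward direction one must check that conjugating by the fixed paths $\alpha,\beta$ correctly converts a free-endpoint statement about $\gamma$ into a based-loop statement that surjectivity can be applied to, and in the reverse direction one must observe that a rel-endpoint homotopy of a loop keeps both ends pinned at $x_0$, so the resulting $A$-path is genuinely a loop rather than merely an arc. Since $A$ is assumed only path-connected, the auxiliary paths $\alpha,\beta$ are unconstrained and their choice is immaterial to the conclusion.
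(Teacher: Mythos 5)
Your proof is correct and complete: the forward direction via conjugation by auxiliary $A$-paths $\alpha,\beta$ and cancellation, and the reverse direction via the observation that a rel-endpoints homotopy keeps a loop at $x_0$ a loop, are exactly the right steps, and your explicit insistence on the rel-endpoints convention is the one point that genuinely matters (under free homotopy the equivalence would be vacuous and false). Note that the paper itself offers no proof to compare against --- it quotes the lemma as a well-known fact, citing Hatcher (Exercise 13, p.~39) --- so your argument simply supplies the standard solution to that exercise.
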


\begin{thm}
The zero set $\cV(D) \subseteq T_A$ of the discriminant $D$ is a deformation retract of the set $(U_t^A)^c$ and Theorem     \ref{Thm:TopologyTrinomials} literally also holds for the set $\cV(D)$. For $U_t^A$ we have $\pi_1(U_t^A) = \Z^2$.
\label{Thm:TopologyMiddleTerm}
\end{thm}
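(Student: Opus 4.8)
The plan is to prove the two assertions of Theorem~\ref{Thm:TopologyMiddleTerm} separately, building on the structural results already established for the $j \neq t$ case. I begin with the claim that $\cV(D)$ is a deformation retract of $(U_t^A)^c$. By Corollary~\ref{Cor:Discriminant} we know $\cV(D) \subseteq \partial((U_t^A)^c)$, and by Corollary~\ref{Cor:LocalStructureUzero} the slice $(U_t^A)^c \cap (T_A)_q$ for fixed $q \in \C^*$ equals $F(s,t,q)^{\odd} \cap B_r(0)$ (respectively the even version), i.e.\ a bounded union of segments emanating from the origin, whose outer endpoints lie precisely on $\cV(D)$. First I would retract each such segment radially outward onto its endpoint on the boundary circle of $B_r(0)$; since $r = |q|^{s/(s+t)}\bigl((t/s)^{s/(s+t)} + (s/t)^{t/(s+t)}\bigr)$ depends continuously on $|q|$ and the argument relations defining $F(s,t,q)$ depend continuously on $\arg(q)$, this radial homotopy can be performed globally and continuously over all of $T_A$, yielding a deformation retraction of $(U_t^A)^c$ onto $\cV(D)$.

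Next I would show that Theorem~\ref{Thm:TopologyTrinomials} applies verbatim to $\cV(D)$. The cleanest route is to observe that $\cV(D)$ is exactly the locus where $f$ has a double root, which by Lemma~\ref{Lem:Discriminant} is the smooth hypersurface cut out by $q^s(s+t)^{s+t} = (-1)^{s+t} p^{s+t} s^s t^t$ in $(\C^*)^2$. Since $|p|$ is determined by $|q|$ along $\cV(D)$ (as in the proof of Corollary~\ref{Cor:LocalStructureUzero}), I can apply the analogue of Lemma~\ref{Lemma:Deformation1Ualp} to deformation retract $\cV(D)$ onto its intersection with a torus $T_{(|p|,|q|)}$; the remaining argument relation $(s+t)\arg(p) \equiv s\arg(q) \pmod{2\pi}$ then parameterizes exactly a curve of type $\rho_{(\arg(p),\arg(q))}$, which by Corollary~\ref{Cor:TorusKnot} is the torus knot $K(s+t,s)$. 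Hence $\cV(D)$ is isotopic to $K(s+t,s)$ and has fundamental group $\Z$.

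For the final and hardest claim, $\pi_1(U_t^A) = \Z^2$, the key point is that $U_t^A$ does \emph{not} deformation retract to the standard torus in the clean way that Lemma~\ref{Lemma:Deformation1Ualp} provided for $j \neq t$, because containment in $U_t^A$ now genuinely depends on $|p|$ through the disk radius $r$ in Corollary~\ref{Cor:LocalStructureUzero}. Instead I would argue that $U_t^A$ is homotopy equivalent to the full torus $T_A \cong (\C^*)^2$ itself: the complement $(U_t^A)^c$ retracts onto the one-dimensional set $\cV(D)$, and because $\cV(D)$ has real codimension two inside the four-real-dimensional manifold $T_A$ while carrying a torus-knot (hence $\Z$) fundamental group, removing it does not kill either generator of $\pi_1(T_A) = \Z^2$. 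The precise mechanism is to use Lemma~\ref{Lem:FundamentalGroupSubspace}: I would take the standard torus $T_{(1,1)} \subseteq U_t^A$ (note $T_{(1,1)}$ avoids $\cV(D)$ since the discriminant relation forces specific norm constraints on $p,q$) and verify that the inclusion $T_{(1,1)} \hookrightarrow U_t^A$ induces a surjection on $\pi_1$ by checking that any path in $U_t^A$ with endpoints on $T_{(1,1)}$ can be pushed onto the torus, which follows from the fiber-bundle structure~\eqref{Equ:FiberBundleArgMap} together with the fact that the removed set is the knotted circle $\cV(D)$ of codimension two. Injectivity then follows because the two standard loops on $T_{(1,1)}$ remain non-trivial and independent in $\pi_1(U_t^A)$: neither can bound a disk in $U_t^A$ since any such disk would have to cross $\cV(D)$, contradicting the linking behavior recorded by the $\Z$-valued fundamental group of $\cV(D)$.

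The main obstacle I anticipate is making the last injectivity argument rigorous. Establishing that $\pi_1(U_t^A) = \Z^2$ rather than some quotient requires controlling exactly how the knotted circle $\cV(D)$ sits inside $(\C^*)^2 \cong T^2 \times \R^2$ and confirming that deleting it leaves both $\Z$-generators of the torus factor intact while adding no new generators. I expect the cleanest justification to come from a Mayer--Vietoris or fibration argument over the $\arg$-bundle, decomposing $U_t^A$ fiberwise over $(S^1)^2$ and tracking how the fiber (a punctured or truncated plane, depending on the disk radius $r$) contributes to the fundamental group; the subtlety is that the puncture data varies over the base exactly along the knot $\cV(D)$, so the monodromy must be shown to be trivial on $\pi_1$.
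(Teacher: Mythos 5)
Your treatment of the first two assertions coincides with the paper's own proof: the radial retraction of each slice $(U_t^A)^c \cap (T_A)_q = F^{\odd}(s,t,q) \cap \cB_q^{\bullet}$ onto the endpoints on the boundary circle (the paper's homotopy $F_1$), followed by the fiberwise retraction of Lemma~\ref{Lemma:Deformation1Ualp} carrying $\cV(D)$ onto the curve $\rho_{(\pi/(s+t),0)}$, which is $K(s+t,s)$ by Corollary~\ref{Cor:TorusKnot}. The problem is your third step.

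Your computation of $\pi_1(U_t^A)$ fails as written, for two reasons. First, the inclusion $T_{(1,1)} \subseteq U_t^A$ that you feed into Lemma~\ref{Lem:FundamentalGroupSubspace} is false. By Corollary~\ref{Cor:LocalStructureUzero}, at $|q|=1$ the complement $(U_t^A)^c$ consists of the points of $F^{\odd}$ (resp.\ $F^{\even}$) with $|p| \le r$, where $r = (t/s)^{s/(s+t)} + (s/t)^{t/(s+t)} > 1$ (one of the two summands is already $\ge 1$ and the other is positive). So the unit torus meets $(U_t^A)^c$ along the entire knot curve $\rho$; avoiding $\cV(D)$, which sits at $|p| = r > 1$, is not enough. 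Second, your heuristic that deleting something homotopy equivalent to a codimension-two knotted circle cannot kill generators of $\pi_1(T_A)$ is unsound: what is deleted from $T_A$ to form $U_t^A$ is $(U_t^A)^c$ itself, a real codimension-\emph{one} subset, and the homotopy type of a complement is not determined by the homotopy type of the deleted set, so the asserted equivalence $U_t^A \simeq T_A$ is unsubstantiated. The paper avoids all linking and Mayer--Vietoris considerations: it takes the torus $T_{(|p^*|,1)}$ with $|p^*| = (t/s)^{s/(s+t)} + (s/t)^{t/(s+t)} + 1 > r$, on which every trinomial is lopsided with dominating term $p z^t$, so that $T_{(|p^*|,1)} \subseteq U_t^A$; then, using that membership in $U_t^A$ is preserved under rescaling $|q|$ and under increasing $|p|$ (Theorem~\ref{Thm:LocalStructureUalpha}), it pushes paths onto this torus and applies Lemma~\ref{Lem:FundamentalGroupSubspace} twice, obtaining surjections $\pi_1(T_{(|p^*|,1)},x_0) \twoheadrightarrow \pi_1(U_t^A,x_0) \twoheadrightarrow \pi_1(T_A,x_0)$. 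Since both outer groups are $\Z^2$, and a quotient of $\Z^2$ that surjects onto $\Z^2$ must be $\Z^2$, the injectivity you were worried about comes for free. Replacing $T_{(1,1)}$ by $T_{(|p^*|,1)}$ and the linking argument by this sandwich of surjections is exactly the repair your outline needs.
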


\begin{proof}

Let $f = z^{s+t} + p z^{t} + q$ and without loss of generality $s + t$ odd (for $s + t$ even the proof works analogously). First, we deal with $(U_t^A)^c$. By definition $f \in (U_t^A)^c$ only if $f$ is nowhere lopsided with dominating term $p$. This is reflected in the following way. We define for every $q \in \C^*$ the closed punctured disk
\begin{eqnarray*}
	\cB_{q}^{\bullet} & = & \{p \in \C^* \ : \  |p| \leq |q|^{s/(s+t)} \left((t/s)^{s/(s+t)} + (s/t)^{t/(s+t)}\right)\} 
\end{eqnarray*}
in the $\C^*$-slice of $T_A$ given by fixing $q \in \C^*$. By Corollary \ref{Cor:LocalStructureUzero} we know that for every fixed $q \in \C^*$ we have $f \in (U_t^A)^c$ if and only if $p \in F(s,t,q)^{\odd} \cap \cB_{q}^{\bullet}$, which is an
arrangement of $s+t$ half open segments in $\C^*$, and $f \in \cV(D)$ if and only if $p \in (F(s,t,q)^{\odd} \cap \partial \cB_{q}^{\bullet}) \setminus \{0\}$. Thus, we can deformation retract $(U_t^A)^c$ to $\cV(D)$ via the homotopy
\begin{eqnarray*}
	F_1: T_A \times [0,1]  & \to & T_A, \\
		  ((p,q),l) & \mapsto & \left(\frac{p}{(1-l) + l |p| \cdot \left(|q|^{s/(s+t)} \left((t/s)^{s/(s+t)} + (s/t)^{t/(s+t)}\right)\right)^{-1}}, q\right),
\end{eqnarray*}
i.e., we retract every half-open ray segment in $F^{\odd}(s,t,q) \cap \cB_{q}^{\bullet}$ to its intersection point with $\partial \cB_{q}^{\bullet}$.

Recall the definition of the homotopy $F$ in \eqref{Equ:Homotopy} in Lemma \ref{Lemma:Deformation1Ualp}. Since for a fixed $q$ the zero set $\cV(D)$ intersects every half ray of $F(s,t,q)^{\odd}$ in exactly one point (Corollary \ref{Cor:LocalStructureUzero}) and $F$ maps every 
ray of $F(s,t,q)^{\odd}$ to exactly one point located on the closed path $\rho(\pi/(s+t),0)$ (see Lemma \ref{Lemma:Deformation1Ualp}, Definition \eqref{Equ:PathRho} and Lemma \ref{Lemma:Deformation2Ualp}) on the standard 2-torus $T_{(1,1)} \subseteq T_A$, $F_{|\cV(D)}$ deformation retracts $\cV(D)$ to $\rho(\pi/(s+t),0)$ and is a homeomorphism on the subspace of $T_A$ given by fixing $|q|$. Now, the statement follows with Theorem \ref{Thm:TopologyTrinomials}.

Finally, we compute the fundamental group $\pi_1(U_t^A)$. Let now $T_{(|p^*|,1)} \subseteq T_A$ be the torus given by $|p^*| = (t/s)^{s/(s+t)} + (s/t)^{t/(s+t)} + 1$ (and $|q| = 1$). By Theorem \ref{Thm:LocalStructureUalpha} we have $T_{(|p^*|,1)} \subseteq U_t^A$ since every trinomial in $T_{(|p^*|,1)}$ is lopsided with dominating term $p z^t$. Let $x_0$ be the origin in $T_{(|p^*|,1)}$. We investigate the following inclusions.
\begin{eqnarray*}
 (S^1)^2 \simeq T_{(|p^*|,1)} \ \hookrightarrow \ U_t^A \ \hookrightarrow \ T_A \simeq (\C^*)^2.
\end{eqnarray*}

Let $\gamma$ be an arbitrary closed path in $U_t^A$ with start- and endpoint in $T_{(|p^*|,1)}$. Since $q$ is the constant term of every trinomial in $T_A$, we can, by continuously rescaling the norms of the roots, first retract $\gamma$ to a path $\gamma'$, which is contained in the subspace of $T_A$ given by $|q| = 1$. Since for every point $(p,q) \in \gamma \subseteq U_t^A$ and every $\lam > 1$,
Theorem~\ref{Thm:LocalStructureUalpha} implies $(\lam p, q) \in U_t^A$, $\gamma'$ is homotopy equivalent to a path $\gamma'' \in T_{(|p^*|,1)}$. 

An analogous statement holds for an arbitrary path $\gamma$ in $T_A$ with start- and endpoints in $U_t^A$, since we can simply retract $\gamma$ to a path in $T_{(|p^*|,1)} \subseteq U_t^A$. 

Since every point in $U_t^A$ is path connected to $T_{(|p^*|,1)} \subseteq U_t^A$ by the upper argumentation, $U_t^A$ is path connected (alternatively, this fact can also be derived from Corollary \ref{Cor:LocalStructureUzero}). Thus, we can apply Lemma \ref{Lem:FundamentalGroupSubspace} and obtain surjective maps
\begin{eqnarray*}
 \pi_1(T_{(|p^*|,1)},x_0) \ \twoheadrightarrow \ \pi_1(U_t^A,x_0) \ \twoheadrightarrow \ \pi_1(T_A,x_0).
\end{eqnarray*}
Since we know $\pi_1(T_{(|p^*|,1)},x_0) = \pi_1(T_A,x_0) = \Z^2$, we can conclude $\pi_1(U_t^A,x_0) = \Z^2$.
\end{proof}

Note that the statements about $U_j^A$ remain true in $\tilde{T}_A = T_A \cup \{z^{s+t} + q \, : \, q \in \C^*\}$ since $U_j^A \cap \{z^{s+t} + q \, : \, q \in \C^*\} = \emptyset$ for all $1 \leq j \leq s+t-1$. Similarly $\pi_1((U_t^A)^c)$ still equals $\Z$ since it can easily be deformation retracted to $\{z^{s+t} + q \, : \, q \in \C^*\}$ with Theorem \ref{Thm:LocalStructureUalpha}. However, the topology of $(U_j^A)^c$ for $j \neq t$ might be different in $\tilde{T}_A$ since the homotopy $F$ in Lemma \ref{Lemma:Deformation1Ualp} cannot be extended to $\tilde{T}_A$. We do not discuss this last case in further detail.

Instead, we close the article with some remarks about the zero set $\cV(D)$ of the discriminant $D$ of trinomials and its amoeba and coamoeba.

\begin{cor}
Let $D$ be the discriminant of all trinomials with support set $A = \{0,t,s+t\}$ with zero set $\cV(D) \subseteq T_A$. Then the amoeba $\cA(D)$ is a line given by
\begin{eqnarray*}
 \log|p| \ = \ (s/(s+t)) \cdot \log|q| + \log|(t/s)^{s/(s+t)} + (s/t)^{t/(s+t)}|,
\end{eqnarray*}
and the coamoeba $\coA(D)$ is isotopic to the torus knot $K(s+t,s)$.
\label{Cor:DiscriminantAmoeba}
\end{cor}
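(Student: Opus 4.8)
The plan is to read off both statements directly from the explicit discriminant in Lemma \ref{Lem:Discriminant}. Since $q \in \C^*$, the factor $q^s$ is nonvanishing, so on $T_A \cong (\C^*)^2$ the zero set $\cV(D)$ is cut out by the single binomial relation
\begin{equation*}
 p^{s+t} \ = \ (-1)^{s+t} \frac{(s+t)^{s+t}}{s^s t^t}\, q^s .
\end{equation*}
All of the geometry of $\cV(D)$ will be extracted from this one equation by separately taking absolute values and arguments.

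For the amoeba I would take $\log|\cdot|$ of both sides, obtaining $(s+t)\log|p| = s\log|q| + \log\bigl((s+t)^{s+t}/(s^s t^t)\bigr)$, which is the asserted line after dividing by $s+t$. The one computation to carry out is the identification of the constant: one checks the elementary identity $(s+t)\,s^{-s/(s+t)} t^{-t/(s+t)} = (t/s)^{s/(s+t)} + (s/t)^{t/(s+t)}$, e.g.\ by factoring $s^{-s/(s+t)}t^{-t/(s+t)}$ out of the right-hand side, whence $\tfrac1{s+t}\log\bigl((s+t)^{s+t}/(s^st^t)\bigr)$ equals the stated logarithm. That $\cA(D)$ is the \emph{entire} line (and not a proper subset) follows because for any $|p|,|q|$ satisfying the norm relation and any value of $\arg(q)$ one may solve for $\arg(p)$, so every point of the line is attained.

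For the coamoeba I would apply $\Arg$ to the same binomial relation. Using $\arg\bigl((-1)^{s+t}\bigr) = (s+t)\pi$ and that the constant is real positive, this yields the single congruence $(s+t)\arg(p) \equiv (s+t)\pi + s\,\arg(q) \pmod{2\pi}$ on the torus $(S^1)^2$. Thus $\coA(D)$ is a coset of the kernel of the group homomorphism $(S^1)^2 \to S^1$, $(\arg p,\arg q)\mapsto (s+t)\arg p - s\arg q$. Because $\gcd(s+t,s)=\gcd(s,t)=1$, the coefficient vector $(s+t,-s)$ is primitive, so the induced map $\Z^2 \to \Z$ is surjective and this kernel — hence its coset — is a single connected circle, namely the curve wrapping $s$ times in the $\arg(p)$-direction and $s+t$ times in the $\arg(q)$-direction. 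This is precisely a curve $\rho_{(\arg p,\arg q)}$ of the type in \eqref{Equ:PathRho} (the translate $\rho_{(\pi/(s+t),0)}$ when $s+t$ is odd, $\rho_{(0,0)}$ when $s+t$ is even), so Corollary \ref{Cor:TorusKnot} identifies it with the torus knot $K(s+t,s)$; alternatively one notes that $\Arg$ restricted to $\cV(D)$ is exactly the retraction of $\cV(D)$ onto $\rho_{(\pi/(s+t),0)}$ already produced in the proof of Theorem \ref{Thm:TopologyMiddleTerm}.

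The bookkeeping is all routine; the two points deserving care are the elementary but not-quite-obvious identity collapsing the amoeba constant into the stated form, and the coprimality argument guaranteeing that $\coA(D)$ is one connected $(s,s+t)$-curve rather than a disjoint union of circles — this connectedness is exactly what makes it a torus knot rather than a link.
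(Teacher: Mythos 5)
Your proof is correct, and it takes a genuinely different route from the paper's. The paper derives the amoeba statement by citing Corollaries \ref{Cor:Discriminant} and \ref{Cor:LocalStructureUzero} (the lopsidedness analysis that located the double root at $|p| = |q|^{s/(s+t)}\bigl((t/s)^{s/(s+t)} + (s/t)^{t/(s+t)}\bigr)$ on each $\C$-slice), and gets the coamoeba statement topologically: $\cV(D) \subseteq (U_t^A)^c$, and the deformation retraction of $(U_t^A)^c$ onto $K(s+t,s)$ from Theorem \ref{Thm:TopologyMiddleTerm} acts fiberwise over the $\Arg$-map (Lemma \ref{Lemma:Deformation1Ualp}), so $\Arg(\cV(D))$ is isotopic to the knot. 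You instead read everything off the single binomial relation $p^{s+t} = (-1)^{s+t}(s+t)^{s+t}s^{-s}t^{-t}q^s$ extracted from Lemma \ref{Lem:Discriminant}: taking $\log|\cdot|$ gives the line (your identity $(s+t)s^{-s/(s+t)}t^{-t/(s+t)} = (t/s)^{s/(s+t)} + (s/t)^{t/(s+t)}$ checks out, and it is exactly the step one must not skip), while taking $\Arg$ exhibits $\coA(D)$ as a coset of the kernel of the character $(\theta,\psi)\mapsto(s+t)\theta - s\psi$, with $\gcd(s+t,s)=1$ guaranteeing a single connected $(s,s+t)$-curve, i.e.\ $K(s+t,s)$ via Corollary \ref{Cor:TorusKnot}. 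Your sign bookkeeping is right: the coset is $\rho_{(\pi/(s+t),0)}$ for $s+t$ odd and $\rho_{(0,0)}$ for $s+t$ even, matching the $F^{\odd}$/$F^{\even}$ dichotomy. What your approach buys: it is self-contained modulo the Greenfield--Drucker formula, it needs no amoeba-theoretic or homotopy machinery, and it yields a slightly stronger conclusion --- the coamoeba \emph{equals} an $(s,s+t)$-curve on the argument torus (and the amoeba equals the full line, via your surjectivity remark), rather than merely being isotopic to the knot. What the paper's route buys: it requires no computation at all given the structure already erected in Sections \ref{Sec:TrinomialsModulis} and \ref{Sec:TrinomialsTopology}, and it makes transparent why the coamoeba inherits the knot type of $(U_t^A)^c$ --- a fact your direct computation confirms but does not explain. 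Note also that the paper's own closing remark (that $D$ is a bivariate binomial, so $\cA(D)$ must be a line by Passare--Sadykov--Tsikh plus convexity of complement components) is essentially the conceptual shadow of your calculation.
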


\begin{proof}
The amoeba statement follows immediately from Corollaries~\ref{Cor:Discriminant}
and~\ref{Cor:LocalStructureUzero}.
For the coamoeba statement recall that $\cV(D) \subseteq (U_t^A)^c$ and $(U_t^A)^c$ deformation retracts to $K(s+t,s)$ by Theorem \ref{Thm:TopologyMiddleTerm}. Since by Lemma \ref{Lemma:Deformation1Ualp} the deformation retraction is given by retracting every fiber of the $\Arg$-map to its intersection point with the unit torus, we can conclude that $\Arg(\cV(D))$ is isotopic to $K(s+t,s)$.
\end{proof}

We remark that the statement about the amoeba $\cA(D)$ is exactly what we would expect a priori from amoeba theory. Due to a result by Passare, Sadykov and Tsikh \cite[Corollary 8]{Passare:Sadykov:Tsikh}, amoebas of principal $A$-determinants are solid, i.e., every component of the amoeba complement corresponds to a vertex in the Newton polytope via the order map. This implies in particular that amoebas of discriminants of univariate polynomials are solid; see \cite[Corollary 9]{Passare:Sadykov:Tsikh}. Since in our case the discriminant $D$ is a bivariate binomial it follows that the complement of $\cA(D)$ has exactly two components. Since each of these components is convex \cite{Gelfand:Kapranov:Zelevinsky}, $\cA(D)$ has to be a line.
 \smallskip

Furthermore, the Theorems \ref{Thm:TopologyTrinomials} and \ref{Thm:TopologyMiddleTerm} are a generalization of the well-known Milnor-fibration for the case of discriminants of trinomials. Recall that Milnor's fibration theorem states that for every $(n+1)$-variate complex polynomial $f$ with a singular point in the origin and every sufficient small $\eps > 0$ we have that
\begin{eqnarray}
 \frac{f}{|f|}: (S^{2n+1}_\eps \setminus \cV(f)) \to S^1 \label{Equ:MilnorFiberTheorem}
\end{eqnarray}
is a fibration. Here $S^{k}_\eps$ denotes the sphere of real dimension $k$ around the origin with radius $\eps$ and $\cV(f)$ denotes the zero set of $f$. Each fiber of the fibration is a smooth parallelizable manifold of real dimension $2n$. The boundary of this fiber corresponds to the intersection $S^{2n+1}_\eps \cap \cV(f)$ and is a compact manifold of real dimension $2n - 1$, which is called \textit{Milnor fiber}. In the special case of $n = 1$ the Milnor fiber is a fibered knot. In general, this fibration does not extend to arbitrary radii of the sphere. For further details see e.g. \cite{Dimca:Book,Milnor:Book}.

If we embed the space $T_A$ in the parameter space $\C^2$ (i.e., we allow $p,q = 0$), then Milnor's fiber theorem states that the zero set $\cV(D)$ of the discriminant $D$ intersected with a 3-sphere of small radius $\eps$ around the origin is diffeomorphic to a fibered knot. Our Theorems \ref{Thm:TopologyTrinomials} and \ref{Thm:TopologyMiddleTerm} show that this knot, the Milnor fiber, is the $K(s+t,s)$ torus knot and in particular that this diffeomorphism extends to the whole space $T_A$. Thus, the fibration \eqref{Equ:MilnorFiberTheorem} extends to the whole space and hence is a fibration $T_A \setminus \cV(D) \to S^1 \times \R_{> 0}$ in our case.

Our results about the discriminant furthermore reprove and generalize a large part of a Theorem by Libgober \cite[Theorem B]{Libgober}. He had shown that for trinomials with $t = 1$ the space $T_A \setminus \cV(D)$ has the fundamental group given by the torus knot $K(s+1,s)$.
Libgober showed additionally that (for arbitrary $t$) the space $T_A \setminus \cV(D)$ is the Eilenberg MacLane space for $s+t$ and $t$.

\section{Final Remarks}
\label{Sec:Openquestions}
In this paper, we focussed on trinomials and have studied the geometry and topology 
of the space of trinomials with respect to the norms of their roots. 
Beyond trinomials, studying the
geometry of polynomials is a well-established
field (see, e.g., Marden's book \cite{Marden:Book}), which in the last years has seen a lot of 
renewed interest, notably through the work of Br\"and\'{e}n and Borcea (see e.g., \cite{Borcea:Braenden:ApplicationsStablePolynomials,Borcea:Braenden:PolyaSchur} and the 
survey \cite{Wagner:StablePolynomialsSurvey}).
Extending the trinomial situation from the current paper to general polynomials leads to
the following question:
Given a fixed support $A \subseteq \{0, \ldots, n\}$, what is the geometry and the topology
of the space of univariate polynomials of degree $n$ with support $A$, 
with regard to the norms of their roots.
In \cite{Vassiliev:TopologyDiscriminants},
Vassiliev has studied for general polynomials the topology of discriminants 
and their complements. Topologically, this question has a long history. In 1970 Arnold proved in \cite{Arnold} that for an arbitrary parameter space $\C^A$ with $A = \{0,\ldots,d\} \subseteq \Z$ with discriminant $D$ the space $\C^A \setminus \cV(D)$ is diffeomorphic to the space $\R^2(d)$ of all subsets of $\R^2$ with cardinality $d$. Particularly, the fundamental group of $\R^2(d)$ is the $d$-th braid group. Braid groups go back to Artin in the 1920's; see \cite{Artin}. Complements of discriminants have various applications in and are connected to different branches of mathematics, e.g. Smale's topological complexity of algorithms \cite{Smale}. See Vassiliev's book \cite{Vassiliev:Book} for an overview and further details.

However, studying our trinomial questions in the more general context of polynomials with a fixed support would correspond to study a ``norm discriminant'' for polynomials which has yet to be developed.

\bibliographystyle{amsplain}
\bibliography{./AmoebasUnivarCase}

\end{document}